\definecolor{darkgreen}{rgb}{0,0.5,0}
\definecolor{darkred}{rgb}{0.7,0,0}
\theoremstyle{plain}
\newtheorem{lemma}{Lemma}[section]
\newtheorem{thm}[lemma]{Theorem}
\newtheorem{prop}[lemma]{Proposition}
\newtheorem{cor}[lemma]{Corollary}
\theoremstyle{definition}
\newtheorem{defn}[lemma]{Definition}
\newtheorem{rmk}[lemma]{Remark}
\numberwithin{equation}{section}
\newcommand{\al}{\alpha}
\newcommand{\de}{\delta}
\newcommand{\Om}{\Omega}
\newcommand{\la}{\lambda}
\newcommand{\La}{\Lambda}
\newcommand{\si}{\sigma}
\newcommand{\Si}{\Sigma}
\renewcommand{\th}{\theta}
\newcommand{\Th}{\Theta}
\newcommand{\vth}{\vartheta}
\newcommand{\Ups}{\Upsilon}
\newcommand{\R}{\ensuremath{{\mathbb R}}}
\newcommand{\N}{\ensuremath{{\mathbb N}}}
\newcommand{\C}{\ensuremath{{\mathbb C}}}
\DeclareMathOperator{\inj}{inj}
\newcommand{\norm}[1]{\Vert#1\Vert}
\def\osc{\mathop{{\mathrm{osc}}}\limits}
\newcommand{\arsinh}{{\rm arsinh}}
\newcommand{\beq}{\begin{equation}}
\newcommand{\eeq}{\end{equation}}
\newcommand{\beqw}{\begin{equation*}}
\newcommand{\eeqw}{\end{equation*}}
\newcommand{\beqs}{\begin{equation*}}
\newcommand{\eeqs}{\end{equation*}}
\newcommand{\beqa}{\begin{equation}\begin{aligned}}
\newcommand{\eeqa}{\end{aligned}\end{equation}}
\newcommand{\beqas}{\begin{equation*}\begin{aligned}}
\newcommand{\eeqas}{\end{aligned}\end{equation*}}
\newcommand{\brmk}{\begin{rmk}}
\newcommand{\ermk}{\end{rmk}}
\newcommand{\partref}[1]{\hbox{(\csname @roman\endcsname{\ref{#1}})}}
\newcommand{\half}{\frac{1}{2}}
\newcommand{\define}{\mathrel{\mathrm {:=}}}
\renewcommand{\i}{\mathrm{i}}
\newcommand{\M}{\ensuremath{{\mathcal M}}_{-1}}
\newcommand{\abs}[1]{\vert#1\vert} 
\newcommand{\babs}[1]{\left| #1\right|}
\newcommand{\eps}{\varepsilon}
\newcommand{\na}{\nabla}
\newcommand{\Hol}{{\mathcal{H}}} 
\newcommand{\ran}{\rangle}
\newcommand{\lan}{\langle}
\newcommand{\Col}{\mathcal{C}}
\newcommand{\thin}{\text{-thin}}
\newcommand{\thick}{\text{-thick}}
\newcommand{\Rea}{\mathrm{Re}\,}
\newcommand{\Ima}{\mathrm{Im}\,}
\newcommand{\Cyl}{{\mathscr{C}}}
\newcommand{\Proj}{P_g^\Hol}
\newcommand{\ov}{\overline}
\newcommand{\thalf}{\tfrac12}
\newcommand{\Area}{\text{Area}}
\newcommand{\uth}{\partial_\th u_1}
\title{{\sc
Sharp eigenvalue estimates on degenerating surfaces 
}
\\ 
}
\author{Nadine Gro{\ss}e and Melanie Rupflin}
\begin{document}

\begin{abstract}
We consider the first non-zero eigenvalue $\lambda_1$ of the Laplacian on hyperbolic surfaces for which one disconnecting collar degenerates and prove that $8\pi \na\log(\lambda_1)$ essentially agrees with the dual of the differential of the degenerating Fenchel-Nielsen length coordinate. As a consequence, 
we can improve previous results of Schoen, Wolpert, Yau \cite{SWY} and Burger \cite{Burger} to obtain estimates with optimal error rates and obtain new information on the leading order terms of the polyhomogeneous expansion of $\la_1$ of Albin, Rochon and Sher \cite{ARS1}.
\end{abstract}

\maketitle

\section{Introduction and results}\label{sect:intro}
Let $M$ be a closed oriented surface of genus $\gamma\geq 2$ (always assumed to be connected) and let $g$ be a hyperbolic (i.e.~Gauss curvature $K_g\equiv -1$)  metric on $M$.
Let $\si^1$ be a simple closed geodesic in $(M,g)$ which decomposes $M$ into two connected components  $M^+$ and $M^-$.
We consider surfaces for which the length $\ell_1=L_g(\si^1)$  is small compared to the length of any other simple closed geodesic in $(M,g)$.  In this case the 
first eigenvalue of the Laplacian on $(M,g)$ turns out to be small and 
to essentially only depend on $\ell_1$ and the genera of $M^\pm$.

The asymptotic behaviour of small eigenvalues on
degenerating surfaces 
was first considered by  Schoen, Wolpert and Yau in \cite{SWY}. They studied surfaces with bounded negative curvature $-c\leq K_g\leq -\tilde c<0$ and proved in particular that if 
the collapsing geodesics decompose $M$ 
into $n+1$ connected components then 
precisely $n$ eigenvalues $0<\la_1\leq\ldots\leq\la_n$ tend to zero,
 with the rate of convergence being linear with respect to the (sum of the) lengths of the corresponding geodesics. 
Their results apply in particular to the setting of one collapsing disconnecting geodesic $\si^1$ we described above and in this case yield that 
\beq
\label{est:la_rough}
c\ell_1\leq \la_1 \leq C\ell_1 \text{ while } \la_2\geq \tilde c>0,
\eeq
for constants 
$c,\tilde c>0$  and $C<\infty$ that depend, apart from the genus, only on a lower bound on the lengths of the simple closed geodesics different from $\si^1$, or equivalently on a lower bound 
$\hat\de>0$ for the injectivity radius on $M\setminus \Col(\si^1)$. Here and in the following $\Col(\si^1)$ denotes the collar neighbourhood around $\si^1$ described by the collar lemma that we recall in Lemma~\ref{lemma:collar}.
\begin{rmk}\label{rmk:la-simple}
We note that \eqref{est:la_rough} implies in particular that $\la_1$ is simple 
provided $\ell_1=L_g(\si^1)\leq \ell_0$ for a suitably small constant $\ell_0=\ell_0(\hat\de, \gamma)\leq \ell_0(\gamma)$. 
\end{rmk}
A refined picture of the behaviour of small eigenvalues on degenerating hyperbolic surfaces was then given by Burger in \cite{Burger-kurz}
and \cite{Burger}, who compared the small eigenvalues of $-\Delta_g$ on $M$ with the eigenvalues $\widehat \la_j$ of the Laplacian of a weighted graph that is associated to the set of collapsing geodesics. In  \cite{Burger-kurz} he established that  
 $\frac{\la_j}{\widehat \la_j}\to \frac{1}{2\pi^2}$, $1\leq j\leq n$, as the surface collapses and subsequently refined this convergence result in \cite{Burger} by giving both a lower bound (of order $O(\sqrt{\ell})$) and an upper bound (of order $O(\ell\log \ell)$) on the resulting errors.
We note that in the setting we consider here his result from  \cite{Burger} yields that
\beq \label{est:burger}
C_{top}-C\sqrt{\ell_1}\leq \frac{\la_1}{\ell_1}\leq C_{top}+C\ell_1 \abs{\log(\ell_1)}\eeq
where $C_{top}$ is  given in terms of the genera $\gamma^{\pm}$ of the connected components $M^{\pm}$ of $M\setminus \si^1$
\beq \label{def:Ctop}
C_{top}=\frac{-\chi(M)}{2\pi^2 \chi(M^-)\chi(M^+)}=\frac{2(\gamma-1)}{2\pi^2 (1-2\gamma^+)\cdot (1-2\gamma^-)}.
\eeq
We remark that the upper bound in \eqref{est:burger} can be obtained directly from comparing with a function that is linear on the collar $\Col(\si^1)$ (or alternatively a function that solves the corresponding ODE on the collar) and constant on the rest of the surface while the proof of the lower bound  is far more involved and does not yield the same order of the error. 

We note that \eqref{est:burger} implies in particular that if $g$ and $\tilde g$ are two metrics which satisfy the assumptions above for geodesics $\si^1$ and $\tilde \si^1$ of the same length $\ell_1$ and connected components $M^\pm$ and $\tilde M^\pm$ of the same genera $\gamma^\pm$ then 
\beq
\label{est:diff-Burger}
\ell_1^{-1}\abs{\la_1(M,g)-\la_1(M,\tilde g)}\leq C\sqrt{\ell_1}.
\eeq
It is natural to ask whether the lower bound in \eqref{est:burger} and hence also the above estimate can be improved to $O(\ell_1\abs{\log(\ell_1)})$ and, more importantly, whether such an estimate would be optimal, respectively whether one can derive an estimate of the form \eqref{est:diff-Burger} with optimal error rates.

In the present work 
we will give positive answers to both of these questions and indeed derive both $C^0$- and $C^1$-estimates with sharp error bounds. 
Most of our analysis is quite different from the methods in \cite{Burger} as we use a dynamic approach and consider the variation of the eigenvalues induced by a change of the geometry of $(M,g)$, or to be more precise by a change of the Fenchel-Nielsen coordinates. We then obtain $C^0$-bounds, such as refinements of \eqref{est:diff-Burger} and \eqref{est:burger}, only as corollary 
of our $C^1$-bounds. 

We remark that bounds on some derivatives of small eigenvalues have been obtained previously by Batchelor \cite{Batch} who considered the change of the small eigenvalues induced by a change of the length of the collapsing geodesics, so in our case  $\frac{\partial \la_1}{\partial \ell_1}$, though his error estimates are only of order $O(\frac{1}{\abs{\log(\ell)}})$ and would thus in particular not allow for any improvement of \eqref{est:burger}.

 To state our first main result, we recall, see e.g. \cite{Buser} and \cite{Hu}, that we may extend 
any given simple closed geodesic $\si^1$ in a closed oriented hyperbolic surface $(M,g)$ to a collection $\mathcal{E}=\{\si^1,\ldots,\si^{3(\gamma-1)}\}$ of simple closed geodesics in $(M,g)$ that decompose the surface into pairs of pants. 
We also recall that we can and will choose this collection  so that the length of all geodesics $\si^j$ is bounded from above by a  constant $\bar L$ that depends only on the genus and an upper bound on $L_g(\si^1)$,
so in the situation of Remark~\ref{rmk:la-simple}, by  some $\bar L=\bar L(\gamma)$. 
The metric $g$ is determined (up to pull-back by diffeomorphisms) by the corresponding Fenchel-Nielsen coordinates $(\ell_i,\psi_i)$, for $\ell_i=L_g(\si^i)$ and $\psi_i$ the corresponding twist parameters and our first main result gives the following sharp $C^1$-bounds on the dependence of the first eigenvalue on the Fenchel-Nielsen coordinates.

\begin{thm}\label{thm:FN}
Let $(M,g)$ be a closed oriented hyperbolic surface of genus $\gamma\geq 2$ and let
$\si^1$ be a simple closed geodesic which disconnects 
$M$ into two connected components. We let  $\hat \delta>0$ be a lower bound  on the injectivity radius $\inf_{M\setminus \Col(\si^1)}\inj_g(p)$ away from the collar around $\si^1$ and suppose that $\ell_1\leq\ell_0$, for $\ell_0=\ell_0(\hat \de,\gamma)>0$ as in Remark~\ref{rmk:la-simple}. 
\newline
Let $\si^2,\ldots, \si^{3(\gamma-1)}$ be simple closed geodesics so that 
$\mathcal{E}=\{\si^1,\ldots,\si^{3(\gamma-1)}\}$ decomposes $(M,g)$ into pairs of pants, which we can furthermore assume to be chosen so that $L_g(\si^j)\leq \bar L=\bar L(\gamma)$ for every $j$.
Then the  first non-zero eigenvalue $\la_1$ of $-\Delta_g$
has the following dependence on the 
corresponding Fenchel-Nielsen length and twist coordinates
$\ell_i$ and $\psi_i$:
\newline
There exists a constant $C$ depending only on $\hat \delta $ and the genus of $M$ 
 so that
\beq\label{claim:dla-length}
\babs{\frac{\partial\lambda_1}{\partial\ell_1}- \frac{\la_1}{\ell_1}}\leq C\ell_1 \abs{\log(\ell_1)}
\text{ and }
\babs{\frac{\partial\la_1}{\partial\ell_j}}\leq C\ell_1^2  \text{ for } j\neq 1
\eeq
while a change of the twist coordinates can only change the first eigenvalue by 
\beq\label{claim:dla-twist}
\babs{\frac{\partial\la_1}{\partial\psi_1}}\leq C\ell_1^4, \qquad \text{
respectively } \qquad 
\babs{\frac{\partial\la_1}{\partial\psi_j}}\leq C\ell_1^2 \quad\text{ for } j=2,\ldots,3(\gamma-1).\eeq
\end{thm}

We note that the facts that $\la_1$ is simple and invariant under pull-back by diffeomorphisms 
guarantee that 
the above derivatives are well-defined, see also Lemma~\ref{cor:grad-la}. We will prove the above result  based on an essentially explicit characterisation of $\na \la_1$ given later in Theorem~\ref{thm:1}.  

As a consequence of the $C^1$-bounds on $\la_1$ stated in the above result we immediately obtain the following refinement of the result of Burger \cite{Burger} in the considered setting

\begin{cor}
\label{thm:C0-est}
Let $M$ be a closed oriented surface of genus $\gamma\geq 2$, $\bar\si$ be a simple closed curve that disconnects $M$ into two connected components of genera $\gamma^{\pm}$ and let $C_{top}$ be given by \eqref{def:Ctop}.
Then there exists a function $f\colon (0,2 \arsinh(1))\to \R^+$ 
that depends only on $\gamma^\pm$ and satisfies
\beqs 
\label{claim:est-f}
\babs{\frac{f(\ell_1)}{\ell_1}-C_{top}}\leq C\ell_1\abs{\log(\ell_1)}
\eeqs
and for any $\hat \delta>0$ there exists a constant $C=C(\hat \delta, \gamma)$ such that the following holds true: \newline
Let $g$ be any hyperbolic metric on $M$ for which $\inj_{g}(x)\geq \hat \delta$ on $M\setminus \Col(\si^1)$, $\si^1$ the unique geodesic in $ (M,g)$ that is homotopic to $\bar \si$, 
and for which $\ell_1\define L_g(\si^1)< 2\arsinh(1)$. Then the first eigenvalue $\la_1(M,g)$ of $-\Delta_g$ satisfies
\beq 
\label{claim:la-f}
\abs{\la_1(M,g)-f(\ell_1)}\leq C\ell_1^2.
\eeq
In particular, for metrics $g,\tilde g$ for which the lengths of the corresponding geodesics $\si^1$ and $\tilde \si^1$ agree, we have that 
\beq 
\label{est:which_is_sharp}
\abs{\la_1(M,g)-\la_1(M,\tilde g)}\leq C\ell_1^2.\eeq
\end{cor}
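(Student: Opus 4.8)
The plan is to deduce Corollary \ref{thm:C0-est} from the $C^1$-estimates of Theorem \ref{thm:FN} by integrating them along a path in the space of hyperbolic metrics on $M$ that keeps the length $\ell_1$ of the degenerating geodesic fixed, using throughout that $\la=\la_1$ is simple (Remark \ref{rmk:la-simple}) and diffeomorphism invariant, so that its derivatives in the Fenchel--Nielsen coordinates are well defined (Lemma \ref{cor:grad-la}).

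\emph{Construction of $f$ and the first estimate.} For each pair $\ga^\pm$ (so that $\ga:=\ga^++\ga^-\ge2$) fix once and for all a pants decomposition $\mathcal{E}_0$ of $M$ containing a curve $\bar\si$ disconnecting $M$ into pieces of genera $\ga^\pm$, and for $\ell\in(0,2\arsinh(1))$ let $g_\ell$ be the hyperbolic metric whose Fenchel--Nielsen coordinates with respect to $\mathcal{E}_0$ are $\ell_1=\ell$, all remaining lengths equal to a fixed $L^\ast=L^\ast(\ga)\in(0,\bar L(\ga)]$, and all twists zero; this depends only on $\ga^\pm$, and we put $f(\ell):=\la_1(M,g_\ell)$. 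As $g_\ell$ has injectivity radius bounded below by a $\ga$-dependent constant on $M\setminus\Col(\si^1)$ and only the coordinate $\ell_1$ varies along $\ell\mapsto g_\ell$, the first bound of Theorem \ref{thm:FN} gives $\babs{f'(\ell)-f(\ell)/\ell}\le C(\ga)\,\ell\abs{\log\ell}$ for $\ell\le\ell_0(\ga)$. Hence $u(\ell):=f(\ell)/\ell$ obeys $\abs{u'(\ell)}=\ell^{-1}\babs{f'(\ell)-f(\ell)/\ell}\le C(\ga)\abs{\log\ell}$, which is integrable near $0$; so $u$ extends continuously to $\ell=0$ with $\abs{u(\ell)-u(0^+)}\le C(\ga)\int_0^\ell\abs{\log s}\,ds\le C(\ga)\,\ell\abs{\log\ell}$, and $u(0^+)=\lim_{\ell\to0}\la_1(M,g_\ell)/\ell=C_{top}$ by the asymptotics of \cite{SWY} and \cite{Burger}. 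For $\ell$ bounded away from $0$ the bound on $f$ follows from $\la_1\sim\ell$, see \eqref{est:la_rough}. This gives the first displayed inequality of the corollary.

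\emph{Proof of \eqref{claim:la-f} and \eqref{est:which_is_sharp}.} Let $g$ be as in the statement, $\ell=L_g(\si^1)$, and suppose first $\ell\le\ell_0$ with $\ell_0$ fixed below. By Lemma \ref{lemma:appendix-collect} choose $\si^2,\dots,\si^{3(\ga-1)}$ completing $\si^1$ to a pants decomposition with $L_g(\si^j)\le\bar L(\ga)$; after pulling $g$ back by a suitable diffeomorphism of $M$ fixing the isotopy class of $\bar\si$ (which changes neither $\la_1$, nor $\ell$, nor the lower bound on $\inj_g$ off the collar) we may assume this decomposition is $\mathcal{E}_0$. Since no $\si^j$, $j\ge2$, lies inside $\Col(\si^1)$ it meets $M\setminus\Col(\si^1)$, so $L_g(\si^j)\ge2\hat\delta$; and by invariance under the Dehn twists along the $\si^j$ we may take all twist coordinates of $g$ in bounded intervals (of length $\ell$ about $\si^1$, of length $\le\bar L$ about the others). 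Now join the Fenchel--Nielsen coordinates of $g$ to those of $g_\ell$ by the affine path that keeps $\ell_1\equiv\ell$ and moves every other coordinate monotonically. Along it the lengths $\ell_j$, $j\ge2$, stay in $[\min(2\hat\delta,L^\ast),\bar L]$, whence the corresponding metrics have injectivity radius bounded below on $M\setminus\Col(\si^1)$ by some $\hat\delta'=\hat\delta'(\hat\delta,\ga)>0$; choosing $\ell_0=\ell_0(\hat\delta',\ga)$ as in Remark \ref{rmk:la-simple}, Theorem \ref{thm:FN} applies all along the path, and integrating gives
\[ \babs{\la(M,g)-f(\ell)}\le\int_0^1\Bigl(\sum_{j\ge2}\babs{\tfrac{\partial\la}{\partial\ell_j}}\babs{\dot\ell_j}+\sum_{j}\babs{\tfrac{\partial\la}{\partial\psi_j}}\babs{\dot\psi_j}\Bigr)dt\le C(\hat\delta,\ga)\,\ell^2, \]
since each coordinate has total variation $O(1)$ while $\babs{\partial\la/\partial\ell_j},\babs{\partial\la/\partial\psi_j}\le C\ell^2$ for $j\ge2$ and $\babs{\partial\la/\partial\psi_1}\le C\ell^4$. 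For $\ell\in[\ell_0,2\arsinh(1))$ both $\la(M,g)$ and $f(\ell)$ lie in a fixed compact subinterval of $(0,\infty)$ by \eqref{est:la_rough}, so $\babs{\la(M,g)-f(\ell)}\le C(\hat\delta,\ga)\le C(\hat\delta,\ga)\ell_0^{-2}\ell^2$, and \eqref{claim:la-f} holds there too. Then \eqref{est:which_is_sharp} follows at once from $\babs{\la(M,g)-\la(M,\tilde g)}\le\babs{\la(M,g)-f(\ell)}+\babs{f(\ell)-\la(M,\tilde g)}$.

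The crux is to run the integration with constants that are uniform along the path, which is exactly why one first needs the a priori two-sided control $2\hat\delta\le L_g(\si^j)\le\bar L$ on the remaining lengths together with the Dehn-twist normalisation of the twists; and since $f$, hence the reference metric $g_\ell$, must be chosen independently of $\hat\delta$, one has to allow the effective injectivity-radius bound along the path to drop to $\min(\hat\delta,c_\ga)$, which still keeps the constant in Theorem \ref{thm:FN} depending only on $\hat\delta$ and $\ga$. The one external input that fixes the value \eqref{def:Ctop} is the identification $u(0^+)=C_{top}$, borrowed from the known first-order asymptotics of \cite{SWY} and \cite{Burger}.
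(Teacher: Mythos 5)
Your overall strategy is exactly the paper's: define $f(\ell)=\la_1(M,g_\ell)$ for a reference family $g_\ell$ with only the coordinate $\ell_1=\ell$ varying, get the first estimate by integrating $\babs{\tfrac{d}{d\ell}\tfrac{f(\ell)}{\ell}}\leq C\abs{\log \ell}$ and identifying the limit $C_{top}$ via Burger, and then obtain \eqref{claim:la-f} by integrating the remaining derivative bounds of Theorem \ref{thm:FN} along an affine Fenchel--Nielsen path joining (a pullback of) $g$ to $g_\ell$ while keeping $\ell_1\equiv\ell$. The injectivity-radius control along the path and the treatment of $\ell\geq\ell_0$ also match the paper.

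There is, however, one genuine gap, at the step ``after pulling $g$ back by a suitable diffeomorphism of $M$ fixing the isotopy class of $\bar\si$ we may assume this decomposition is $\mathcal{E}_0$.'' The pants decomposition $\{\si^1,\hat\si^2,\dots,\hat\si^{3(\ga-1)}\}$ produced by Lemma \ref{lemma:appendix-collect} from the given metric $g$ has controlled lengths but an uncontrolled \emph{topological type}, and a diffeomorphism of $M$ can only carry it to a decomposition of the same type; for $\ga\geq 3$ there are several inequivalent types of pants decompositions containing $\bar\si$, so in general no diffeomorphism carries it to the fixed $\mathcal{E}_0$. You cannot repair this by instead extending $\si^1$ by the geodesic representatives of the curves of $\mathcal{E}_0$ itself, because then the upper bound $L_g(\si^j)\leq\bar L(\ga)$ — which you need for assumption \eqref{ass:upperbound} and for the $O(1)$ total variation of the coordinates — is lost. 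The paper closes this gap with Lemma \ref{lemma:appendix-glue}: it fixes in advance the \emph{finitely many} collections $\mathcal{\bar E}_1,\dots,\mathcal{\bar E}_N$ (all containing $\bar\si$) realising every possible type, observes that the single reference metric $g_\ell$ has Fenchel--Nielsen coordinates bounded above and below with respect to \emph{each} $\mathcal{\bar E}_i$, and then runs your interpolation in the coordinates of whichever $\mathcal{\bar E}_i$ the pullback of $g$'s decomposition matches. With that modification the rest of your argument goes through as written.
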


This result is sharp as we shall prove 
\begin{thm} \label{thm:sharp}
For every genus $\gamma\geq 2$ there exist constants $\hat\de>0$, $\bar c>0$ and $\bar \ell>0$
 so that the following holds true.
Let $M$ be a closed oriented surface of genus $\gamma$ and let $\bar\si$ be a simple closed curve that disconnects $M$ into two connected components of genera $\gamma^{\pm}$.
\newline
Then there exist families of hyperbolic metrics $(g_{\ell})_{\ell\in(0,\bar \ell)}$ and 
$(\tilde g_{\ell})_{\ell\in(0,\bar\ell)}$
 satisfying the assumptions of Corollary~\ref{thm:C0-est} for the fixed $\hat \delta>0$ and with 
 $L_{\tilde g_{\ell}}(\si^1_{\tilde g_{\ell}})=\ell=  L_{g_{\ell}}(\si^1_{ g_{\ell}})$ for which  
\beq 
\label{claim:thm-sharp}
 \abs{\la_1(M,g_{\ell})-\la_1(M,\tilde g_{\ell})} \geq \bar c \cdot \ell^2.\eeq
\end{thm}

For the proof of Theorem~\ref{thm:sharp} we will construct families of metrics satisfying the assumptions of Theorem~\ref{thm:FN} for which $\frac{\partial \la_1}{\partial \ell_2}\geq c\ell_1^2$, compare Section~\ref{subsec:proof_sharp}.

We recall that the deep results \cite{ARS1} of Albin, Rochon and Sher establish that the resolvent operator on Riemannian manifolds  has a polyhomogeneous expansion along degenerating families of metrics that are product-type $d$-metrics of order $2$, see \cite[Sec. 1.2]{ARS1} for the precise definitions. In particular, our degenerating families fit into this class, as follows from \cite{Melrose-Zhu} and \cite[Theorem 4]{Obitsu-Wolpert}. Thus, the results of \cite{ARS1, ARS2} ensure that in the situation considered here, where the first eigenvalue is simple, $\la_1$ itself admits such a polyhomogeneous expansion, which as observed \cite[Prop 7.1]{ARS2} provides an alternative way of obtaining the result of Burger that 
$$\la_1=c\ell_1+o(\ell_1).$$
While these results already established that the leading order term in the polyhomogeneous expansion 
\beq
\label{eq:polyhom}
\la_1=\sum_{i\geq 1} \sum_{k=1}^{N_i} f_{i,j} \ell_1^{\alpha_i}  \log(\ell_1)^{k}, \qquad \al_i\in \R, N_i\in \N_0, 
\eeq
is given by $C_{top}\ell_1$, with \cite{Burger} furthermore proving that the next term must appear with an exponent of at least $\al_2\geq \frac{3}{2}$, our 
results now give the following new insight into the leading order terms of this expansion:
\begin{itemize}
\item[-] the second order term in the above expansion appears with exponent $\al_2=2$
\item[-] we can have at most one logarithmic term of order $\al_2=2$, namely $f_{2,1}\ell_1^2\log(\ell_1)$, and the coefficient of this term is constant
\item[-]the next term in the expansion is $f_{2,0} \ell_1^2$ and the coefficient of this term is non-constant, in particular cannot not vanish in general.
\end{itemize}
It would be of interest to understand whether the logarithmic term $f_{2,1}\ell_1^2\log(\ell_1)$ is non-zero which would mean that the \textit{two leading order terms} of the expansion depend only on the genus of the surface, or whether conversely this term is zero, which would mean that the first two terms in the polyhomogeneous expansion are indeed polynomial in $\ell_1$. 

We also note that the results of Schoen, Wolpert and Yau \cite{SWY}, Burger \cite{Burger} and Batchelor \cite{Batch} apply to more general settings of several degenerating collars, as do the results on holomorphic quadratic differentials from \cite{holo-paper} that we use in our proof and that the refined analysis of small eigenvalues in this more general setting will be addressed in future work. 

We remark that the study of eigenvalues of the Laplacian on manifolds has a long and fruitful history. We recall in particular that 
  the work of Cheeger \cite{Cheeger} establishes that the first eigenvalue of the Laplacian on any manifold is bounded from below by $\frac{1}{4}h^2(M,g)$, 
  while  Buser \cite{Buser-upper} obtained an upper bound on $\la_1$ of $2\sqrt{K}(\dim(M)-1)h(M,g)+10h^2(M,g)$, $-(\dim(M)-1)K$ a lower bound on the Ricci-curvature and $h(M,g)$ the Cheeger isoperimetric constant, compare also \cite{Ledoux}. 
 Properties of eigenvalues on Riemannian manifolds in general, and hyperbolic surfaces in particular,  and their relations to other topics  such as Selberg's eigenvalue conjecture (see e.g. \cite{Sar}) and minimal surfaces (see \cite{Fraser-Schoen}), have been considered by many authors. We refer in particular to the books of Buser \cite{Buser} and Bergeron \cite{Berg} for an overview of results on eigenvalues on hyperbolic surfaces and note that the asymptotic behaviour of small eigenvalues has been considered also by  Grotowski, Huntley and Jorgenson in \cite{GHJ}, and in a generalised setting by Judge \cite{Judge},  that Colbois and Colin de Verdi\`ere used the study of eigenvalues on weighted graphs to obtain multiplicity results for eigenvalues 
on hyperbolic surfaces \cite{CC} and that the question of how many eigenvalues of $-\Delta_g$ on a hyperbolic surface of genus $\gamma$ can be smaller than $\frac14$ has been addressed in particular by \cite{Buser_77}, \cite{Schmutz} and  \cite{Otal-Rosas}.

This paper is structured as follows: We will begin by recalling the necessary background material on hyperbolic surfaces and holomorphic quadratic differentials in Section~\ref{sect:background}. The proof of our main results are then all given in Section~\ref{sect:main}: There we first prove an essentially explicit characterisation of the $L^2$-gradient of $\la_1$ that seems to be of independent interest, see Theorem~\ref{thm:1}. We then use this theorem to prove Theorem~\ref{thm:FN} and Corollary~\ref{thm:C0-est} in Sections \ref{subsec:proof_thmFN} and \ref{subsec:cor} and finally show the sharpness of our results by proving Theorem~\ref{thm:sharp}. Many of these proofs are based on energy estimates for the first eigenfunction that we collect in Section~\ref{subsec:energy}, and whose proof is then carried out in the final Section~\ref{sect:proof_ef}.
\section{Background material}\label{sect:background}

\subsection{Hyperbolic surfaces and collars}$ $\\ 
In this section we collect results on hyperbolic surfaces and collars that we will use in the main parts of this paper. These results are all well-known and can be found e.g. in the books of Buser \cite{Buser} and of Hummel \cite{Hu} on hyperbolic surfaces.

We will repeatedly use that a neighbourhood of any simple closed geodesic is described by the following Collar lemma of Keen-Randol

\begin{lemma}[Keen-Randol \cite{randol}] \label{lemma:collar}
Let $(M,g)$ be a closed oriented hyperbolic surface and let $\si$ be a simple closed geodesic of length $\ell$. Then there is a neighbourhood $\Col(\si)$ around $\si$, a so-called collar, which is isometric to the 
cylinder 
$(-X(\ell),X(\ell))\times S^1$
equipped with the metric $\rho^2(s)(ds^2+d\theta^2)$ where 
\beq \label{eq:rho-X} 
\rho(s)=\rho_\ell(s)=\frac{\ell}{2\pi \cos(\frac{\ell s}{2\pi})} 
\qquad\text{ and }\qquad  
X(\ell)=\frac{2\pi}{\ell}\left(\frac\pi2-\arctan\left(\sinh\left(\frac{\ell}{2}\right)\right) \right).\eeq
\end{lemma}

On collars we will always use these coordinates and the corresponding complex variable $z = s + \i\th$. 

It is useful to remark that on collars around geodesics of length $\ell\in (0,2\arsinh(1)]$ we have 
\beq 
\label{est:inj-by-rho}
\rho(s)\leq \inj_g(s,\th)\leq \pi \rho(s) \text{ and } 
\rho(s+\La)\leq \rho(s)e^{\La}
\text{ for all } (s,\th)\in \Col(\si),\, \La>0
\eeq
as a short calculation shows, see e.g. \cite[(A.7) and (A.9)]{RT-neg}. It is also  well-known
that $\de\thin(\Col(\si))\define \{p\in \Col(\si):\ \text{inj}_g(p)<\delta\}$ is given in collar coordinates by 
\beq \label{eq:Xde} (-X_\de(\ell), X_\de(\ell)) \times S^1 \subset \Col(\si),    \text{ where } 
X_\de(\ell)=  \frac{2\pi}{\ell}\left(\frac{\pi}{2}-\arcsin \left(\frac{\sinh(\frac{\ell}{2})}{\sinh \de}\right) \right)\eeq
for $\de\geq \ell/2$, respectively $X_\de(\ell)=0$  for smaller values of $\de$ 
and that 
\beq
\Area(\de\thin(\Col(\si))\leq C\de .
\label{est:area-thin}
\eeq

We furthermore recall from \cite[Theorem~4.1.1]{Buser} that any set $\{\si^1, \ldots, \si^k\}$ of simple closed disjoint geodesics in $(M,g)$
can be extended to a decomposing collection $\mathcal{E}=\{\si^1, \ldots, \si^{3(\gamma-1)}\}$ of simple closed geodesics which can and will always be chosen so that the following holds:

\begin{lemma}(Consequence of \cite[Theorem~3.7]{Hu}) \label{lemma:appendix-collect}
For any genus $\gamma\geq 2$ and any number $\bar L_1$ there 
 exists a number $\bar L$ so that the following holds true: 
 Let $\{\si^1,\ldots, \si^k\}$ be any set  of disjoint simple closed geodesics 
in a hyperbolic surface $(M,g)$ of genus $\gamma$ whose lengths are $L_g(\si^j)\leq \bar L_1$, $j=1,\ldots, k$. Then this set 
can be extended to a collection $\mathcal{E}=\{ \si^1, \ldots, \si^{3(\gamma -1)}\}$ of disjoint simple closed geodesics that decomposes $(M,g)$ into pairs of pants and that is chosen
so that 
$L_g(\si^i)\leq \bar L
$ for each $i$.
\end{lemma}

\subsection{Standard properties of holomorphic quadratic differentials}\label{sec:hol}$ $\\
Throughout the paper we make use of well-known properties of holomorphic quadratic differentials on hyperbolic surfaces, which we summarise in the present section. None of these properties are new and the stated estimates are in particular already contained in the work of Wolpert \cite{Wolpert82, WII, Wolpert12}. 
The present section is included for the convenience of the reader and as our notation is quite different from the one in \cite{Wolpert82, WII, Wolpert12}, and we note that these estimates can also be found in \cite{RT-neg}. 

To begin with we recall from  \cite{tromba} that the tangent space to $\M$ splits $L^2$-orthogonally as 
$T_g\M= \{L_Xg, X\in \Gamma(TM)\}\oplus \Rea(\Hol(M,g))$
for
$$\Hol(M,g):= \{\Psi: \text{ holomorphic quadratic differentials on } (M,g)\}$$
We recall that real parts of holomorphic quadratic differentials are given by trace and divergence free elements of $T^*M\otimes T^*M$. Hence we will always compute the inner products of such real tensors using the standard inner product on $T^*M\otimes T^*M$ induced by $g$, so that e.g. $\abs{dx\otimes dx}_g^2=\rho^{-2}$ if   $g=\rho^2(dx^2+dy^2)$.
For quadratic differentials we use the normalisation 
$\abs{dz^2}_g=2\rho^{-2}$ for $g=\rho^2(dx^2+dy^2)$ so that inner products of quadratic differentials are given by 
$\langle \psi dz^2,\phi dz^2\rangle= 2\psi \bar\phi \rho^{-2}$. We will use in particular that with this normalisation 
\beq 
\label{eq:Re_inner_prod} 
\langle \Rea(\Psi), \Rea(\Phi)\rangle_{L^2(M,g)} =\thalf \Rea\langle \Psi, \Phi\rangle_{L^2(M,g)}, \text{ in particular } \norm{\Psi}_{L^2}^2=2\norm{\Rea(\Psi)}_{L^2}^2. \eeq

We will furthermore use that norms over the thick part of the surface \[\de\thick(M,g):=\{p\in M: \inj_g(p)\geq \de\}\] are controlled by 
\beq 
\label{est:Linfty-by-L1-1} 
\norm{\Upsilon}_{L^\infty(\de\thick(M,g))}\leq C_\de \norm{\Upsilon}_{L^1(\tfrac\de2\thick(M,g))} \text{ for any } \Upsilon\in \Hol(M,g)
\eeq
for a constant $C_\de$ that depends only on $\de>0$. 

We also recall that for $0<\de<\arsinh(1)$ the $\de\thin$ part of the surface is contained in the union of the collars $\Col(\si)$ around simple closed geodesics $\si$ of length less than $2\de$. 

On such a collar $\Col(\si)$ around a simple closed geodesic $\si$ we will always use collar coordinates $(s,\th)\in (-X(\ell),X(\ell))\times S^1$ as described in the Collar lemma~\ref{lemma:collar}
and always set $z=s+\i\th$. We will often use that on $\Col(\si)$ we
may represent any $\Upsilon\in \Hol(M,g)$ by 
its Fourier series
\beq \label{eq:Laurent}
\Upsilon= \sum_{n=-\infty}^\infty b_n(\Upsilon) e^{n(s+\i\theta)} dz^2,\qquad\quad b_n(\Upsilon)=b_n(\Upsilon, \Col(\si)) \in \C, 
z=s+\i\th \eeq
and that on $\Col(\si)$ we may split $\Upsilon$ orthogonally into its principal part $b_0(\Upsilon)dz^2$ and its collar decay part $\Upsilon-b_0(\Upsilon)dz^2$. 
In situations where we are dealing with a fixed collection $\{\si^j\}$ of geodesics we will also use the abbreviation $b_0^j(\Upsilon):=b_0(\Upsilon,\Col(\si^j))$. 

We will also use that for collars $\Col(\si)$ around geodesics of length $L_g(\si)\leq 2\arsinh(1)$
\beq\label{eq:dz-rho}
\abs{dz}_g^2=2\rho^{-2}\ \text{and}\ \norm{dz^2}_{L^1(\Col(\si))}=8\pi X(\ell) \text{ and } \|dz^2\|_{L^2(\Col(\si))}^2= 32\pi^5 \ell^{-3}+O(1),\eeq
as a short calculation shows, as well as that
\beq
\label{est:dz2-est}
 \norm{dz^2}_{L^\infty(\de\thick(\Col(\si)))}\leq C\de^{-2}.
\eeq

We note that since $\norm{dz^2}_{L^2}\geq c(\bar L)\ell^{-3/2}>0$,  the coefficient describing the principal part can always be bounded by  
\beq 
\label{est:b0-trivial-small} 
\abs{b_0(\Upsilon,\Col(\si))}\leq C \ell^{3/2} \norm{\Upsilon}_{L^2(M,g)} \text{ for } C=C(\bar L), \quad \bar L \text{ an upper bound on } L_g(\si),
\eeq 
and in case $L_g(\si)\leq \arsinh(1)$ furthermore by
\beq 
\label{est:b0-upper} 
\abs{b_0(\Upsilon,\Col(\si))}\leq C  \norm{\Upsilon}_{L^2(\half \arsinh(1)\thick(\Col(\si)))}\ell^{3/2}.
\eeq 

Conversely, to bound the collar decay part we use that for any $0<\de<\half \arsinh(1)$
\begin{equation}\label{est:W1} \norm{\Upsilon-b_0(\Ups)dz^2}_{L^\infty(\de\thin(\Col(\si)))}\leq  C\de^{-2}e^{-\pi/\de}\norm{\Upsilon}_{L^2(\half \arsinh(1)\thick(\Col(\si)))}.
\end{equation}

\subsection{Dual bases to differentials of length and twist coordinates}\label{sec:dual}$ $\\
To control the dependence of the first eigenvalue on the Fenchel-Nielsen coordinates we will use several different bases of the space of holomorphic quadratic differentials (respectively of a suitable subspace). In the present section we introduce these bases, which were studied in detail in the previous work of the authors \cite{holo-paper} respectively of Topping and the second author \cite{RT-neg}, recall the relevant results from \cite{holo-paper} and \cite{RT-neg} on which the later analysis is built and  explain how these bases can be used to compute derivatives of functions such as eigenvalues with respect to the Fenchel-Nielsen coordinates.
We remark that while similar estimates for related bases of $\Hol(M,g)$ were already obtained by Wolpert in \cite{Wolpert08}, compare also \cite{Masur, Mazzeo-Swoboda, Yamada1}, for us it is important to work with bases that are dual to the corresponding differentials, as considered in \cite{RT-neg} and \cite{holo-paper} (rather then e.g. bases obtained as gradients of $\ell_j$ as considered in \cite{Wolpert82} and \cite{Wolpert08}).

To begin with we  
recall the well-known evolution equation for the length of simple closed geodesics along horizontal curves of hyperbolic metrics, which is present already in the work of  Wolpert \cite{Wolpert82}: 
Let $g(t)$ be a family of hyperbolic metrics that moves in the direction $\partial_t g=\Rea(\Upsilon(t))$ of holomorphic quadratic differentials $\Upsilon(t) \in \Hol(M,g(t))$ and let $\si^j$ be a given simple closed geodesic in $(M,g(0))$. Then the length $\ell_j(t)= L_{g(t)}(\si^j(t))$ of the unique geodesic $\si^j(t)$ in $(M,g(t))$ homotopic to $\si$ evolves according to 
\beq 
\label{eq:length-evol}
\tfrac{d}{dt}\, \ell_j=-\tfrac{2\pi^2}{\ell_j} \Rea(b_0^j(\Upsilon)), \text{ for } b_0^j(\Upsilon):=b_0(\Upsilon, \Col(\si^j)),
\eeq
where as above 
$b_0(\Upsilon,\Col(\si))dz^2$ denotes the principal part of $\Upsilon$ on a collar $\Col(\si)$ around a simple closed geodesic. 
We
can hence
 consider the 
$\C$-linear differentials of the length coordinates $\ell_j$ on $\Hol(M,g)$ described by 
\beq
\label{def:dell}
\partial \ell_j\colon\Hol(M,g)\to \C, \qquad \partial \ell_j(\Upsilon):=
-\tfrac{\pi^2}{\ell_j}b_0^j(\Upsilon), \eeq
 compare  \cite[Remark 4.1]{RT-neg} and \cite{Wolpert82}.

For the analysis of eigenvalues on surfaces for which some  geodesics, say $\si^1,\ldots, \si^k$, collapse, it  turns out to be useful to follow the approach of Topping and the second author from \cite{RT-neg}: we split $\Hol(M,g)$ into $\ker(\partial \ell_1,\ldots, \partial \ell_k)$ and its orthogonal complement and  consider the dual basis $\{\tilde \Th^j\}_{j=1}^k$ of the map $(\partial \ell_1,\ldots, \partial \ell_k)\colon\ker(\ell_1,\ldots, \partial \ell_k)^\perp\to \C^k$, which is an isomorphism thanks to \cite[Theorem~3.7]{Wolpert82}.
In the context of our main results, where we only have one collapsing geodesic, we will see that the $L^2$-gradient of 
$\la_1$ can be characterised essentially explicitly in terms of just the corresponding element $\tilde \Th^1$, which is defined by 

\begin{defn}\label{def:Th-tilde}
Let $(M,g)$ be as in Theorem~\ref{thm:FN}. Then we define 
$\tilde{\Th}^1$ be the element of $\ker(\partial \ell_1)^\perp$ which satisfies $\partial \ell_1(\tilde{\Th}^1)=1$ and furthermore set $\tilde{\Om}^1 \define -\frac{\tilde{\Th}^1}{\Vert \tilde{\Th}^1\Vert_{L^2(M,g)}}$.
\end{defn}

The dual basis $ \{\tilde \Th^j\}$ of $\ker(\partial \ell_1,\ldots, \partial \ell_k)$ and its renormalisation $\{\tilde \Om^j\}$ was considered in detail in \cite{RT-neg} and, in our case of only one collapsing geodesic $\si^1$, we know  that $\tilde \Om^1$ has the following properties

\begin{lemma} \label{lemma:RT-neg} (Corollary of \cite[Lemma~4.5]{RT-neg})
Let $(M,g)$ be as in Theorem~\ref{thm:FN}. Then  
$\tilde{\Om}^1 \define -\frac{\tilde{\Th}^1}{\Vert \tilde{\Th}^1\Vert_{L^2}}\in \ker(\partial \ell_1)^\perp$ described in Definition~\ref{def:Th-tilde} satisfies 
\begin{align}\label{est:RT-neg1}
  \Vert \tilde{\Om}^1\Vert_{L^\infty(M\setminus \Col(\si^1), g)} +
\Vert \tilde{\Om}^1 - b_0^1(\tilde{\Om}^1) dz^2\Vert_{L^\infty(\Col(\si^1), g)}\leq C \ell_1^{3/2}
\end{align}
and
\begin{align}\label{est:RT-neg3}
 1-C\ell_1^3\leq b_0^1(\tilde{\Om}^1) \Vert dz^2\Vert_{L^2(\Col(\si^1),g)} \leq 1 
\end{align}
for constants $C$ that depend only on the genus $\gamma$ of $M$ and the lower bound $\hat \de$ on $\inj\vert_{M\setminus \Col(\si^1)}$. 
\end{lemma}

We will use that the above lemma implies in particular that  
\beq 
\label{est:Om-tilde-thick}
 \norm{\tilde\Om^1}_{L^\infty(M,g)}\leq C\ell_1^{-1/2}
 \text{ while }\norm{\tilde \Om^1}_{L^\infty(\frac{\bar\de}{2}\thick (M,g))}\leq C_{\hat\de}\ell_1^{3/2}
 \eeq
 for  $\bar\de=\min(\hat\de,\arsinh(1))$
 and 
\beq \label{est:Om-tilde-L1}
\norm{\tilde \Om^1}_{L^1(M,g)}\leq C\ell_1^{1/2}
\eeq
see also \cite[Lemma~3.12]{Wolpert08} for a closely related result on the corresponding gradient basis. 

We also remark that $\tilde \Om^1$ can be equivalently characterised as the unit element of $\ker(\partial \ell_1)^\perp$ for which $b_0(\tilde \Om^1,\Col(\si^1))>0$ and that 
\beq\label{eq:tilde-Th-b0} \tilde \Th^1 = -\frac{\ell_1}{\pi^2 b_0(\tilde\Om^1, \Col(\si^1))} \tilde\Om^1,\eeq
since \eqref{def:dell} implies that $b_0(\tilde \Om^1,\Col(\si^1))=-\frac{\pi^2} {\ell_1}$. 

We note that as observed in \cite[Corollary 2.3]{holo-paper} the above result from \cite{RT-neg} furthermore allows us to conclude that 
 \beq 
 \label{est:L2-tilde-theta} 
\big| \Vert\tilde \Th^1\Vert_{L^2(M,g)} -\tfrac{\ell_1}{\pi^2} \Vert dz^2\Vert_{L^2(\Col(\si^1),g)}\big|\leq C\ell_1^{5/2} \text{ and }
\big|\norm{\tilde \Th^j}_{L^1(M,g)}-8\pi\big|\leq C\ell_j. 
\eeq

We also recall from  
 \cite{RT-neg} that in the above setting the elements of $\ker(\partial \ell_1)$  are all controlled by
 \beq 
\label{est:W2}
\norm{w}_{L^\infty(M,g)}\leq C_{\hat \de}  \norm{w}_{L^1(M,g)}.
\eeq

Let now  $\mathcal{E}=\{\si^1,\ldots,\si^{3(\gamma-1)}\}$ be a disconnecting family of a
simple closed geodesics 
in a hyperbolic surface $(M,g)$ and let $(\ell_j,\psi_j)$ be the corresponding Fenchel-Nielsen coordinates. Then in addition to the 
$\C$-linear differentials $\partial \ell_j$ of the length coordinates $\ell_j$ on $\Hol(M,g)$ described by 
\eqref{def:dell}, we also need to consider the real differentials of both the length and the twist coordinates on $\Rea(\Hol(M,g))$, which are defined as derivatives 
\beq \label{def:dell-psi-real}
d\ell_j(\Rea(\Upsilon))= \frac{d\ell_j}{dt}=-\tfrac{2\pi^2}{\ell_j} \Rea(b_0^j(\Upsilon)) \text{ and } 
 d\psi_j(\Rea(\Upsilon))= \frac{d\psi_j}{dt},
 \eeq
of the Fenchel-Nielsen coordinates $\ell_j$ and $\psi_j$ along a curve $g(t)$ of hyperbolic metrics with $\partial_t g_t=\Rea (\Upsilon(t))$.

 We note that  \cite[Theorem~3.7]{Wolpert82} assures that $$\Upsilon \mapsto (\partial \ell_1(\Upsilon),\ldots,\partial \ell_{3(\gamma-1)}(\Upsilon))$$ is an isomorphism from $\Hol(M,g)$ to $\C^{3(\gamma-1)}$, while
 \beq \label{def:secnd-iso} 
 \Rea(\Upsilon)\mapsto 
(d\ell_1(\Rea \Upsilon ),d\psi_1(\Rea\Upsilon),\ldots ,
d\ell_{3(\gamma-1)}(\Rea\Upsilon),d\psi_{3(\gamma-1)}(\Rea \Upsilon))\eeq is an isomorphism from $\Rea(\Hol(M,g))$ to $\R^{6(\gamma-1)}$. 

This allows us to consider the following dual bases which play a key role in our analysis of the dependence of the first eigenvalue on the Fenchel-Nielsen coordinates.

\begin{defn}\label{defn:basis1}
Let $(M,g)$ be a hyperbolic surface. Then we associate to any given disconnecting family $\mathcal{E}=\{\si^1,\ldots,\si^{3(\gamma-1)}\}$ of 
simple closed geodesics the following dual bases. 
\begin{enumerate}[(i)]
\item We let $\{\Th^j\}_{j=1}^{3(\gamma-1)}$ be the basis of $\Hol (M, g)$ which is
 dual to complex differentials of the length coordinates $\{\partial \ell_j\}_{j=1}^{3(\gamma-1)}$ given by \eqref{def:dell}, i.e. characterised by
 $$\partial \ell_j(\Th^i)=\delta_j^i \text{ for } i,j\in \{1,\ldots, 3(\gamma-1)\}.$$
 We furthermore denote by $\{\Om^j\}$ the renormalised dual  basis whose elements are given by 
\beq 
\label{def:Om-by-Th} \Om^j\define -\frac{\Th^j}{\Vert \Th^j\Vert_{L^2(M,g)}}.
\eeq  
 \item We let $\{\Psi^j,\La^j\}_{j=1}^{3(\gamma-1)}$ be the elements of $\Hol(M,g)$ which are dual to the real differentials of the Fenchel-Nielsen coordinates in the sense that for  $i,j\in \{1,\ldots, 3(\gamma-1)\}$
  \beq \label{def:Psi_La}
 d\ell_j(\Rea(\La^i))=\de_j^i=d\psi_j(\Rea(\Psi^i)) \text{ and } 
 d\psi_j(\Rea(\La^i))=0=d\ell_j(\Rea(\Psi^i)).
 \eeq
 That is $\{\Psi^j,\La^j\}$ are the unique elements of $\Hol(M,g)$ for which $\{\Rea(\Psi^j), \Rea(\La^j)\}$ is the dual basis of $\Rea(\Hol(M,g))$ for the isomorphism \eqref{def:secnd-iso}.
\end{enumerate}
\end{defn}

We  remark that for 
any function $f\colon \M\to \R$ which is invariant under pull-back by diffeomorphisms we can express the derivatives of $f$ with respect to a given set of Fenchel-Nielsen coordinates $(\ell_j,\psi_j)$ in terms of the dual basis $\{\Psi^j,\La^j\}$ and the $L^2$-gradient of $f$, namely  
\beq 
\label{eq:deriv-f}\frac{\partial f}{\partial \ell_j}=\langle \na f, \Rea(\La^j)\rangle, \qquad 
\frac{\partial f}{\partial \psi_j}=\langle \na f, \Rea(\Psi^j)\rangle. 
\eeq

In the main part of the paper we will use this idea to
obtain sharp bounds on the derivatives of the eigenvalue $\la_1$ from an essentially explicit expression for $\na \la_1$ (in terms of $\tilde \Th^1$) that we will obtain in Theorem~\ref{thm:1} and 
precise bounds on the above dual bases and their relations. 
Such bounds were derived in \cite{holo-paper}, compare also \cite{RT-neg}, and we recall the relevant estimates here. These estimates will all be valid for constants 
that only depend on the genus and on numbers  $\eta,\bar L>0$ which are so that 
\begin{align}
 \mathcal{E}\ \textrm{contains all simple closed geodesics\ } \si \textrm{ of } (M,g) 
 \textrm{ of length } L_g(\si)\leq 2\eta\label{ass:eta}
\end{align}
and
\begin{align}\label{ass:upperbound}
\ L_g(\si)\leq \bar L \textrm{ for every } \si\in \mathcal{E}.
\end{align} 

For the dual bases $\{\Th^j\}$ and $\{\Om^j\}$ of the complex differentials $\partial \ell_j$ we will use the following result from \cite{holo-paper}, which gives the same type of estimates as obtained in the result from \cite{RT-neg} for the $\tilde \Om^j$ that we recalled above. 
 \begin{prop}\cite[Prop. 1.1 and Lem. 2.9]{holo-paper} \label{prop:RT-new} Let $(M,g)$ be any closed oriented hyperbolic surface of genus $\gamma\geq 2$. Let $\mathcal{E}=\{\si^1, \ldots, \si^{3(\gamma-1)}\}$ be any set of simple closed geodesics that decompose $(M,g)$ into pairs of pants.
Then 
 $\{\Om^j\}_{j=1}^{3(\gamma-1)}$ respectively $\{\Th^j\}_{j=1}^{3(\gamma-1)}$ from Definition~\ref{defn:basis1} satisfy
\begin{align}
\label{est:RT-neg1-new}
 \Vert  {\Om}^j\Vert_{L^\infty(M\setminus \Col( {\si}^j), g)} + 
 \Vert  {\Om}^j - b_0^j( {\Om}^j)dz^2\Vert_{L^\infty(\Col (\sigma^j), g)} &\leq C \ell_j^{3/2},\\
  \label{est:RT-neg3-new}
\max(1-C\ell_j^3,\eps_1)\leq b_0^j( {\Om}^j) \Vert dz^2\Vert_{L^2(\Col( {\si}^j),g)} &\leq \, 1,\\
  \label{est:RT-neg4-new}
 |\langle  {\Om}^i,  {\Om}^j\rangle_{L^2(M,g)}|&\leq \, C\ell_i^{3/2} \ell_j^{3/2} \text{ for every } i\neq j
\end{align} 
and
  \beq 
 \label{est:Th-L2-upper} 
 \norm{\Th^j}_{L^2(M,g)}\leq C\ell_j^{-1/2}
 \eeq
 for every $j=1,\ldots, 3(\gamma-1)$ and 
for constants  $C\in\R$ and $\eps_1>0$ that depend only on the genus and the numbers 
 $\eta\in (0,\arsinh(1))$ and $\bar L<\infty$ for which \eqref{ass:eta} and \eqref{ass:upperbound} are satisfied. 
\end{prop}

We note that while the elements $\tfrac12\Th^i$ and $\La^i$ induce the same change of the length coordinates, namely 
$d\ell_j(\tfrac12\Rea\Th^i)=\de_{j}^i=d\ell_j(\Rea(\La)) $,
the elements $\Th^j$ of the dual basis of the complex differentials $\partial \ell_j$ will in general not leave the twist coordinates invariant so we cannot expect these two elements to agree. However, the results of \cite{holo-paper} assure that the difference between these elements is only of order 
 $O(\ell_j)$ and furthermore allow us to express the $\La^j$ and $\Psi^j$ in terms of the $\Om^j$ as follows.

\begin{prop}\cite[Thm. 1.2 and 1.3]{holo-paper} \label{prop:La-Psi}
 Let $(M,g)$ be any closed oriented hyperbolic surface of genus $\gamma$, let $\mathcal{E}=\{\si^1, \ldots, \si^{3(\gamma-1)}\}$ be any decomposing set of simple closed geodesics and let $\{\Th^j \}$, $\{\Om^j\}$ and $\{\Psi^j,\La^j\}$ be the corresponding dual bases defined in Definition~\ref{defn:basis1}.
 
 Then there exists coefficients $c_k^j, d_k^j\in\R$ and $a_j\in\R^+$ so that 
\beq
\label{eq:writing_La} 
\La^j=\tfrac12\Th^j+\sum_k \i\, c_k^j \Om^k\ \text{
 with }\abs{c_k^j}\leq C\ell_j\ell_k^{3/2},\quad j,k=1,\ldots, 3(\gamma-1) 
\eeq
and
\beq
\label{eq:write-Psi-with-Om}
\frac{\Psi^j}{\Vert \Psi^j\Vert_{L^2(M,g)}}= -a_j \i\Om^j+\i\sum_{k\neq j} d_k^j \Om^k \text{ with }  \abs{d_k^j}\leq C\ell_k^{3/2} \ell_j^{3/2} \text{ and } \abs{1-a_j}\leq C\ell_j^3
\eeq
where  $C$ depends only on the genus and the numbers $\eta\in (0,\arsinh(1))$ and $\bar L<\infty$ for which \eqref{ass:eta} and \eqref{ass:upperbound} are satisfied. 

In particular 
\beq \label{est:La-minus-Th}
\norm{\La^j-\tfrac12\Th^j}_{L^\infty(M,g)}\leq C\ell_j \text{ and } \norm{\La^j}_{L^\infty(M,g)}\leq C\ell_j^{-1}.
\eeq 
Furthermore, 
\beq 
\label{est:L2-Psi-upper}
\norm{\Psi^j}_{L^2(M,g)}\leq C\ell_j^{3/2}
\eeq
and $\Psi^j$ is  orthogonal to $\ker(\partial \ell_j)=\text{span}\{\Om^i\}_{i\neq j}=\text{span}\{\Th^i\}_{i\neq j}$.
\end{prop}
We note that the orthogonality of $\Psi^j$ to $\ker(\partial \ell_j)$ is already a consequence of Wolpert's length-twist duality \cite[Theorem~2.10]{Wolpert82}.

The above estimates imply in particular that the principal parts of $\La^j$ satisfy 
\beq 
\label{est:princ-parts-La}
\Rea(b_0^k(\La^j))=-\frac{\ell_j}{2\pi^2}\de_{kj} \text{ and }
\abs{\Ima(b_0^k(\La^j))}=\abs{c_k^j}\abs{b_0^k(\Om^k)} \leq C\ell_j\ell_k^{3}.
\eeq
Combining 
Proposition \ref{prop:La-Psi}  with Proposition~\ref{prop:RT-new} furthermore yields that 
 for any number $\de_0\in (0,\eta)$ there is a constant $C=C(\de_0,\gamma)$ so that, after relabelling the geodesics $\si^j$ to assure that $\ell_j\leq 2\de_0$ precisely for $j\leq j_0\in\{0,\ldots, 3(\gamma-1)\}$, we can bound 
\beq\label{est:La-for-sharp}
\sum_{k=1}^{j_0} \Vert \La^j - b_0(\La^j, \Col(\si^k)) dz^2\Vert_{L^\infty(\Col(\si^k), g)}+
 \Vert \La^j\Vert_{L^\infty(\de_0\thick(M, g))}\leq C \ell_j.
\eeq
As we shall be able to characterise $\na \la_1$ in terms of $\tilde \Th^1$, it is also useful to recall that there is the 
following close relationship between 
 $\Th^1$ and $\tilde \Th^1$, respectively between the renormalised elements $\Om^1$ and  $\tilde \Om^1$:

\begin{prop}\label{prop:Th-tildeTh}\cite[Lemma 2.9]{holo-paper}
Let $(M,g)$ and $\mathcal{E}$ be as in Theorem~\ref{thm:FN}, 
let $\{\Th^j\}_{j=1}^{3(\gamma-1)}$ be the dual basis of $\Hol(M,g)$ introduced in Definition~\ref{defn:basis1} and let $\tilde \Th^1\in \ker(\partial \ell_1)^\perp$ 
be as in Definition~\ref{def:Th-tilde}. Then 
\beq
\label{eq:Th-by-tilde-Th} 
\Th^1= \tilde\Th^1+v^1 \text{ for some } 
 v^1\in \ker(\partial \ell_1) \text{ with }  \norm{v^1}_{L^\infty(M,g)}\leq C \ell_1
\eeq
while the renormalised elements $\Om^1$ and $\tilde \Om^1$ 
are related by 
\beq
\label{eq:Om-by-tilde-Om}
\Om^1= \beta_1 \tilde \Om^1 +w^1 \text{ for some } 
 w^1\in \ker(\partial \ell_1) \text{ with }  \norm{w^1}_{L^\infty(M,g)}\leq C \ell_1^{3/2}
\eeq
and some $\beta_1\in\R^+$ with $1+C\ell_1^{3/2}\geq \beta_1\geq \max(1-C\ell_1^{3/2}, \eps_2)$ for constants $C$ and $\eps_2>0$ that depend only 
on the genus $\gamma$ of $M$ and the lower bound $\hat \de$ on $\inj\vert_{M\setminus \Col(\si^1)}$. 
\end{prop}

Combined with \eqref{est:L2-tilde-theta} the above lemma implies in particular that
\beq \label{est:L^1-the1-precise} 
|\norm{\Th^1 }_{L^2(M,g)}^2- \tfrac{32\pi}{\ell_1}| \leq C \ell_1^2.\eeq

As the derivatives of $\la_1$ with respect to the Fenchel-Nielsen coordinates are given by inner products of $\na\la_1$ with the elements $\{\Psi^j, \La^j\}$, it is furthermore useful to prove the following estimates on the inner products of elements of the bases of $\Hol(M,g)$ introduced in Definition~\ref{defn:basis1}.

\begin{lemma}\label{lemma:inner-prod}
Let $(M,g)$ be any closed oriented hyperbolic surface of genus $\gamma$. Let $\mathcal{E}=\{\si^1, \ldots, \si^{3(\gamma-1)}\}$ be any decomposing set of simple closed geodesics and let $\{\Th^j \}$, $\{\Om^j\}$ and $\{\Psi^j,\La^j\}$ be the dual bases defined in Definition~\ref{defn:basis1}.
Then their inner products are bounded by the following estimates which hold true for constants $C$ that depend only on the genus and the numbers $\eta$ and $\bar L$ from \eqref{ass:eta} and \eqref{ass:upperbound}.
For any $j\in\{1,\ldots, 3(\gamma-1)\}$ we have 
\begin{align}
\label{est:inner-Th-La-same-Re}
\abs{\langle \Rea(\Th^j),\Rea(\La^j)\rangle-\tfrac14 \norm{\Th^j}_{L^2}^2}&\leq C\ell_j^2,\\
\label{est:inner-Th-La-same-Im}
\abs{\langle\Rea(\La^j),\Rea(\i \Om^j)\rangle}&\leq C\ell_j^{5/2},
\\
\label{est:inner-Om-Psi-same}
\abs{\langle\Rea(\tfrac{\Psi^j}{\norm{\Psi^j}_{L^2}}),\Rea( \Om^j)\rangle}&\leq C\ell_j^{3}, 
\end{align}
while for any $i\neq j$ 
\begin{align}
\label{est:inner-Om-La-diff}
\abs{\langle\La^j, \Om^i\rangle}&\leq C\ell_j\ell_i^{3/2}.
\end{align}
\end{lemma}
As mentioned above, Wolpert's length-twist duality already implies that $\langle\Psi^j,\Om^i\rangle=0$ for $i\neq j$. 
\begin{proof}
The proofs of all these estimates are obtained by combining the expressions for $\La^j$ respectively $\Psi^j$ given in Proposition~\ref{prop:La-Psi} with the estimates 
$\abs{\lan \Om^j, \Om^i\ran}\leq \de_{ji}+C\ell_j^{3/2}\ell_i^{3/2}$ and $\norm{\Th^j}_{L^2}\leq C\ell_j^{-1/2}$  from Proposition~\ref{prop:RT-new} . 

We begin by proving the estimates on the inner products involving $\La^j$ which, by Proposition~\ref{prop:La-Psi}, is given by $\La^j=\frac12\Th^j+\sum_j\i\cdot c_k^j\Om^k$ for $c_k^j\in\R$ satisfying  $|c_k^j|\leq C\ell_j\ell_k^{3/2}$. 
To prove \eqref{est:inner-Th-La-same-Re} we combine the above expression with  \eqref{eq:Re_inner_prod} 
to write 
$$\abs{\lan \Rea\Th^j,\Rea\La^j\ran-\tfrac14 \norm{\Th^j}_{L^2}^2}
= \abs{\lan \Rea\Th^j,\Rea(\La^j-\tfrac12
\Th^j)\ran}= \abs{\lan \Rea\Th^j,\Rea(\sum_k\i\cdot c_k^j \Om^k)\ran}
.$$  
As $\Rea\Th^j=-\norm{\Th^j}_{L^2}\Rea(\Om^j)\perp \Rea(\i\Om^j)$ 
we thus obtain the claimed bound of
\beqas
\abs{\lan \Rea\Th^j,\Rea\La^j\ran-\tfrac14 \norm{\Th^j}_{L^2}^2}&\leq \norm{\Th^j}_{L^2}\sum_{k\neq j}\abs{c_k^j}\abs{\langle \Om^j,\Om^k\rangle}\leq C\ell_j^{-1/2}\sum_{k\neq j} \ell_j\ell_k^{3/2} \cdot \ell_j^{3/2}\ell_k^{3/2}\leq C\ell_j^2.
\eeqas
Similarly, we obtain the second claim of the lemma by estimating
\beqas
\abs{\lan \Rea(\La^j),\Rea(\i\Om^j)}=\abs{0+\sum_{k} c_{k}^j \lan \Rea(\i \Om^k),\Rea(\i \Om^j)\ran} \leq C\sum_{k} \ell_j\ell_k^{3/2} (\de_{jk} +C\ell_j^{3/2}\ell_k^{3/2})\leq C\ell_j^{5/2}.
\eeqas 
Finally, to obtain \eqref{est:inner-Om-La-diff}, we use that for $i\neq j$ 
\beqas
\abs{\lan \La^j,\Om^i\ran}&\leq \thalf \norm{\Th^j}_{L^2} \abs{\lan\Om^j,\Om^i\ran}+\sum_{k}\abs{c_k^j}\abs{\lan\Om^k,\Om^i\ran} \\
&
\leq C\ell_j^{-1/2} \ell_j^{3/2}\ell_i^{3/2}+C\sum_{k} \ell_j\ell_k^{3/2} (\de_{ki}+C\ell_k^{3/2}\ell_i^{3/2})\leq C\ell_j\ell_i^{3/2}.
\eeqas
To prove \eqref{est:inner-Om-Psi-same} we use Proposition~\ref{prop:La-Psi} to write 
 $\frac{\Psi^j}{\Vert \Psi^j\Vert_{L^2(M,g)}}= -a_j \i\Om^j+\i\sum_{k\neq j} d_k^j \Om^k $ 
for real coefficients $a_j>0$ and $|d_k^j|\leq C\ell_k^{3/2}\ell_j^{3/2}$ to conclude that indeed
\beqas
\abs{\langle\Rea(\frac{\Psi^j}{\norm{\Psi^j}_{L^2}}),\Rea( \Om^j)\rangle}=\abs{0+\sum_{k\neq j}d_k^j\lan\Rea(\i \Om^k),\Rea(\Om^j)\ran}\leq C\sum_{k\neq j} C\ell_k^{3/2}\ell_j^{3/2}\cdot C\ell_k^{3/2}\ell_j^{3/2} \leq C\ell_j^3
\eeqas
for any $j\in\{1,\ldots, 3(\gamma-1)\}$ 
as claimed.
\end{proof}

\section{Proofs of the main results} \label{sect:main}
We now turn to the proofs of our main results on the behaviour of the first eigenvalue. 

In the first part of this section we collect properties of the first eigenfunction,
proved later on in Section~\ref{sect:proof_ef}, which we then use in the subsequent section to give an essentially explicit characterisation of the $L^2$-gradient of $\la_1$ in terms of the dual 
$\tilde \Th^1$ of the derivative of the degenerating length coordinate. This characterisation is stated in Theorem~\ref{thm:1}, will be proven in Section~\ref{subsec:char-la} and will at the same time be the basis on which we shall prove all other main results in 
the subsequent sections: we prove Theorem~\ref{thm:FN} in Section~\ref{subsec:proof_thmFN}, Corollary~\ref{thm:C0-est} in Section~\ref{subsec:cor} and finally Theorem~\ref{thm:sharp} in Section~\ref{subsec:proof_sharp}.

\subsection{Properties of the first eigenfunction}
\label{subsec:energy} $ $

We recall that the first eigenvalue $\la_1$ and eigenfunction $u_1$ 
minimise the 
Rayleigh-quotient $\frac {\norm{d v}_{L^2(M,g)}^2}{\norm{v}_{L^2(M,g)}^2}$ over the set of all functions $v\in H^1(M,g) \text{ for which } \int_Mvdv_g=0$
and will use that $u_1$ satisfies the following energy estimates which are proven in Section~\ref{sect:proof_ef}.

\begin{lemma}\label{lemma:est-u-thick-main}
For any $\gamma\geq 2$ there exists a
constant $C_0$ so that the following holds true for any closed oriented hyperbolic surface 
$(M,g)$ of genus $\gamma$ and any number 
$\bar \de\in (0,\arsinh(1)]$.
\newline
Suppose that 
all simple closed 
geodesics $\si^1,\ldots,\si^k$ of length less then $2\bar\de$ are so that $M\setminus \si^j$ is disconnected.
Then the first eigenfunction $u_1$ of $-\Delta_g$ (as always normalised by $\norm{u_1}_{L^2(M,g)}=1$)
satisfies the estimate 
\beq \label{est:u-de-thick}
\norm{du_1}_{L^2(\de\thick(M,g))}^2\leq \frac{C_0}{\de}\lambda_1^2 \text{ for every } 0< \de \leq  \bar \de.
\eeq

\end{lemma}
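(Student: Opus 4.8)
The starting point is that the normalised first eigenfunction $u$ solves $\Delta_g u=-\la_1 u$, so that $\norm{du}_{L^2(M,g)}^2=\la_1$ (this is also \eqref{eq:Phi_L1} combined with \eqref{eq:Hopf_L1}), together with $\int_M u\,dv_g=0$ and the crude bound $\norm{u}_{L^1(M,g)}\leq \Vol(M,g)^{1/2}=(4\pi(\gamma-1))^{1/2}$. The content of the lemma is that essentially all of the Dirichlet energy $\la_1$ of $u$ is carried by the very thin part of $M$, and more precisely that the energy on the region where $\inj_g\geq\de$ is smaller by a factor comparable to $\la_1/\de$. By the collar lemma (Lemma~\ref{lemma:collar}) the $\de$-thin part of $(M,g)$ is contained in the union of the collars $\Col(\si^j)$ of the at most $3(\gamma-1)$ simple closed geodesics $\si^j$ of length $\ell_j\leq 2\de\leq 2\bar\de$, all of which disconnect $M$ by hypothesis, and each such collar is isometric to a standard hyperbolic cylinder on which the circle at Fermi distance $r$ from the core geodesic has length $\ell_j\cosh r$.

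The key mechanism is a \emph{flux estimate}. If $c$ is any circle contained in a collar $\Col(\si^j)$ of a disconnecting geodesic, then $c$ separates $M$ into two pieces; integrating $\Delta_g u=-\la_1 u$ over one of them, $R$, and applying the divergence theorem gives $\babs{\int_c\partial_\nu u}=\la_1\babs{\int_R u}\leq \la_1\Vol(M,g)^{1/2}\leq C\la_1$ with $C=C(\gamma)$. In terms of the Fermi coordinates $(r,\theta)$ this yields $\abs{\,\overline{\partial_r u}(r)\,}\leq C\la_1/(\ell_j\cosh r)$ for the $\theta$-average $\overline{\partial_r u}$; and since the flux through a circle depends on its free homotopy class only up to an error $O(\la_1)$ (the difference of the fluxes through two homotopic circles equals $-\la_1$ times the integral of $u$ over the region they cobound), the same bound governs the flux through the edge circles of the collars, hence the normal derivative of $u$ on the boundary of the thick part. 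This step is exactly where the disconnectedness hypothesis enters.

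On this basis the estimate can be assembled as follows. Let $M_0:=M\setminus\bigcup\{\Col(\si): L_g(\si)\leq 2\de_2(\gamma)\}$, which has $\inj_g\geq\de_2(\gamma)$ and hence uniformly bounded geometry and a controlled Poincar\'e inequality. Testing $\Delta_g u=-\la_1 u$ on each connected component $\Om$ of $M_0$ against $u-\bar u_\Om$ and combining the boundary term $\int_{\partial\Om}(u-\bar u_\Om)\partial_\nu u$, controlled by the flux estimate, with the Poincar\'e inequality gives $\norm{du}_{L^2(\Om)}^2\leq C\la_1^2$. On a degenerating collar $\Col(\si^j)$ I would split $u=\bar u(r)+u^\perp$ into its $\theta$-average and the $\theta$-oscillatory part; the circle lengths on the collar are bounded above by a universal constant, so the transverse eigenvalues exceed a fixed positive constant $\gg\la_1$ and $u^\perp$ decays exponentially from the collar edges, giving $\norm{du^\perp}_{L^2(\Col(\si^j))}^2\leq C\la_1^2$, while the flux bound on $\overline{\partial_r u}$ gives
\[
\int_{\{\inj_g\geq\de\}\cap\Col(\si^j)}\abs{\partial_r\bar u}^2\,dv_g\;\leq\;C\la_1^2\int_{r_\de}^{\infty}\frac{dr}{\ell_j\cosh r}\;\leq\;\frac{C\la_1^2}{\ell_j\cosh r_\de}\;\leq\;\frac{C}{\de}\la_1^2,
\]
where $r_\de$ is determined by $\ell_j\cosh r_\de\sim\de$. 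The remaining collars, of geodesics of length in $(2\bar\de,2\de_2(\gamma)]$, lie entirely in $\{\inj_g\geq\de\}$ since $2\bar\de\geq 2\de$; on each of them the energy is again bounded by (flux)$^2\cdot\int dr/(\ell\cosh r)+C\la_1^2\leq C\la_1^2/\ell+C\la_1^2\leq C\la_1^2/\de$, once the flux through them is known to be $O(\la_1)$. Summing the contributions of $M_0$ and of the at most $3(\gamma-1)$ collars gives the claim with $C_0=C_0(\gamma)$.

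The hard part is the control of all of these fluxes, and of the near-constancy of $u$ on the thick part, with constants depending only on $\gamma$ and \emph{not} on the (possibly vastly different) lengths $\ell_j\downto 0$ of the degenerating geodesics nor on $\bar\de\in(0,\de_2]$. The flux estimate handles the disconnecting collars directly from the equation, but the collars of the non-degenerating geodesics need not separate $M$ and may have large Poincar\'e constants, so bounding the flux through them (equivalently: showing that the minimiser $u$ does not waste energy on thin tubes that fail to disconnect the surface) requires using the variational characterisation of $u$ rather than only the eigenvalue equation. Once this is in place the factor $\tfrac1\de$ is automatic: it is precisely what one gets by integrating the flux-controlled radial derivative $\abs{\partial_r\bar u}\lesssim\la_1/(\ell_j\cosh r)$ over the portion of each collar on which the circle length is $\gtrsim\de$.
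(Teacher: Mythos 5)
Your overall strategy (flux estimates from the eigenvalue equation, Fourier decomposition on the collars, Poincar\'e on the thick components) is a legitimate alternative route, but as written it has a genuine gap which you yourself flag and do not close: the flux through circles in the collars of the geodesics of length in $(2\bar\de,2\de_2]$, which need not disconnect $M$, is never controlled. You say this ``requires using the variational characterisation'' of $u$ but supply no argument, and without it neither the energy bound on those collars nor the integration by parts on the components of $M_0$ (whose boundary circles include such non-separating circles) goes through. Note that this obstacle is largely an artefact of your decomposition: if you work instead with the connected components of $\de\thick(M,g)$ itself, every boundary circle lies in the collar of a geodesic of length $<2\de\leq 2\bar\de$, and all such geodesics disconnect $M$ by hypothesis, so your flux estimate $\babs{\int_c\partial_\nu u}=\la_1\babs{\int_R u}\leq C(\gamma)\la_1$ applies to every boundary circle; the price is that these components have Poincar\'e constant of order $1/\de$ (this is exactly Lemma~\ref{lemma:Poincare-functions}), which is where the factor $1/\de$ then enters. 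A second unaddressed point: the boundary term produced by testing $\Delta_g u=-\la_1 u$ against $u-\bar u_\Om$ is $\int_c(u-\bar u_\Om)\partial_\nu u$, not the flux. After splitting off $(\bar u_c-\bar u_\Om)\int_c\partial_\nu u$ you are left with $\int_c(u-\bar u_c)\partial_\nu u$, and bounding this requires the angular energy decay of Lemma~\ref{lemma:ang-energy}, whose right-hand side is itself expressed in terms of the energy of $u$ near the collar ends --- i.e.\ part of the quantity being estimated --- so an absorption argument with a small coefficient is needed; you gesture at the exponential decay of $u^\perp$ but do not close this loop.

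For comparison, the paper's proof avoids fluxes entirely and is a pure comparison argument: writing $\norm{du}_{L^2(M_i^\de)}^2=\mu_i\la_1^2/\de$ on the components of $\de\thick(M,g)$ and assuming $\mu_1$ is maximal, it constructs a competitor $\tilde u$ that is constant on $M_1^\de$ except on interpolation cylinders of fixed length near $\partial M_1^\de$ and equals $u$ plus constants on the complementary pieces. The interpolation cost is controlled by the angular energy estimate with a factor $e^{-\La_0}$ chosen small enough to beat the number of components, the $1/\de$-Poincar\'e inequality controls $\norm{u-\tilde u}_{L^2}$ and hence the normalisation of the competitor, and minimality of the Rayleigh quotient at $u$ forces $\mu_1\leq C_0$. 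Your route could likely be completed, but the missing flux control on non-separating tubes (or the reorganisation that makes it unnecessary) and the absorption step for the oscillatory boundary terms are precisely the non-trivial parts of the proof.
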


We shall furthermore need the following estimates on the \textit{angular} energy which hold true for general eigenfunctions of $-\Delta_g$.

\begin{lemma}\label{lemma:ang-energy-mainpart}
There exist universal constants $C_{1,2}$ and $\de_3>0$ so that the following holds true  for any closed oriented hyperbolic surface 
$(M,g)$ and any eigenfunction $u$ of $-\Delta_g$ to an eigenvalue $\la\in \R$ with $\norm{u}_{L^2(M,g)}=1$. 
Let $\si$ be a simple closed geodesic of length $\ell<2\arsinh(1)$, let $\Col(\si)$ be the collar around $\si$ described by the Collar 
Lemma~\ref{lemma:collar} and let $(s,\th)\in (-X(\ell),X(\ell))\times S^1$ be the corresponding collar coordinates in which the metric takes the form $g=\rho^2(ds^2+d\th^2)$, for $\rho$ and $X(\ell)$ as in \eqref{eq:rho-X}.
 Then
\beq \label{est:weighted-ang-en-4}
\int_{-X(\ell)}^{X(\ell)}\int_{S^1}\abs{u_\th}^2\rho^{-4} dsd\theta\leq C_1 \norm{du}_{L^2(\de_3\thick(\Col(\si)))}^2+C_1 \la^2  X(\ell)
\eeq
and 
\beq \label{est:weighted-ang-en-2}
\int_{-X(\ell)}^{X(\ell)}\int_{S^1}\abs{u_\th}^2\rho^{-2} dsd\theta\leq C_2\norm{du}_{L^2(\de_3\thick(\Col(\si)))}^2 +C_2 \la^2\norm{u}_{L^\infty(M,g)}^2 .
\eeq
\end{lemma}

To apply the above lemma for the first eigenfunction $u_1$ we furthermore recall the following well-known fact about the first eigenfunction: 

\begin{rmk} \label{rmk:L-infty} 
There exists a constant $C_3$ depending at most on the genus of $M$ so that the following holds true: Let $(M,g)$ be a  
closed hyperbolic surface whose 
shortest simple closed geodesic $\si$ is such that $M\setminus \si$ is disconnected. Then
the (normalised) first eigenfunction $u$ of $-\Delta_g$
is bounded by $\norm{u}_{L^\infty(M,g)} \leq C_3$.
\end{rmk}

Together with the above  Lemmas~\ref{lemma:est-u-thick-main} and \ref{lemma:ang-energy-mainpart} this last remark directly implies that the angular energy of the first eigenfunction is controlled by

\begin{cor}\label{cor:ang-energy-mainpart}
Let $(M,g)$ be a hyperbolic surface for which the assumptions of 
 Lemma~\ref{lemma:est-u-thick-main} are satisfied for some number $\bar\de>0$ and let $u_1$ be the first eigenfunction of $-\Delta_g$ (as always normalised by $\norm{u_1}_{L^2(M,g)}=1$). 
 Then the angular energy of $u_1$ on any collar 
 $\Col(\si)$ around a 
simple closed geodesic of length $\ell<2\arsinh(1)$
 is controlled by  
\beq \label{est:weighted-ang-en-4-new}
\int_{-X(\ell)}^{X(\ell)}\int_{S^1}\abs{\uth}^2\rho^{-4} dsd\theta\leq C \cdot\la_1^2+C_1\ell^{-1} \la_1^2 
\eeq
and
\beq \label{est:weighted-ang-en-2-new}
\int_{-X(\ell)}^{X(\ell)}\int_{S^1}\abs{\uth}^2\rho^{-2} dsd\theta\leq C \cdot\la_1^2,
\eeq
where $C_1$ is the universal constant from Lemma~\ref{lemma:ang-energy-mainpart}, $C$ depends only on the genus of $M$ and the number $\bar\de$ and where $X(\ell)$ and $\rho$ are as in the Collar Lemma~\ref{lemma:collar} .
\end{cor}

\subsection{Characterisation of the gradient of \texorpdfstring{$\la_1$}{la1}}
\label{subsec:char-la} $ $ 

The goal of this section is to prove that in the setting of our main results the gradient of $\la_1$ is essentially determined in terms of the element $\tilde \Th^1\in\Hol(M,g)$ that we introduced in Definition~\ref{def:Th-tilde}. 
Before we turn to this result that is stated in detail in Theorem~\ref{thm:1} below, 
 we first discuss how the $L^2$-gradient of general eigenvalues, considered as functions on the set $\M$ of 
all smooth hyperbolic metrics on $M$, is characterised.

We first remark that 
since the splitting  
$T_g\M= \{L_Xg, X\in \Gamma(TM)\}\oplus \Rea(\Hol(M,g))$ is
$L^2$-orthogonal, the $L^2$-gradient of any differentiable function $f\colon \M\to \R$ which is invariant under the pull-back by diffeomorphisms will be in $\Rea(\Hol(M,g))$. 

We also recall that if we consider the function  $g\mapsto \la_k(g)$ on the set of all metrics (not necessarily hyperbolic), then this function is differentiable at any $g$ for which $\la_k$ is simple. Furthermore, in this setting the corresponding $L^2$-gradient is given by $-\half [ \Rea(\Phi(u_k,g))+\la_k u_k^2\cdot g]$, where
$\Phi(u_k,g)$ is the Hopf-differential of the normalised $k$-th eigenfunction $u_k$,  given in local isothermal coordinates $(x,y)$ of $(M,g)$ as 
\beq 
\label{def:Hopf-diff}
\Phi(u_k,g)=(\abs{\partial _x u_k}^2-\abs{\partial_y u_k}^2-2\i \langle\partial_x u_k, \partial_y u_k \rangle) dz^2, \qquad z=x+\i y,
\eeq
see e.g. \cite[Lemma 2.2]{Fraser-Schoen-2013}. As an immediate consequence we obtain that if we consider $\la_k$ only as a function on $\M$, then its $L^2$-gradient is given by the $L^2$-orthogonal projection of 
  $-\half [ \Rea(\Phi(u_k,g))+\la_k u_k^2\cdot g]$ onto $\Rea(\Hol(M,g))$ and so, as tensors in $\Rea(\Hol(M,g))$ are trace-free, by: 

\begin{lemma}
\label{cor:grad-la}
Let $(M,g)$ be a hyperbolic surface for which the $k$-th eigenvalue $\lambda_k$ is simple, $k$ any element of $\N$. Let $u_k$ be the corresponding eigenfunction, normalised to have $\norm{u_k}_{L^2(M,g)}=1$.
Then the $L^2
$-gradient of $\lambda_k\colon\M\to \R$ is given by
\beq
\label{eq:grad-la-general} 
\na \lambda_k(g)=-\tfrac12\Rea(\Proj(\Phi(u_k,g)))\eeq
for $\Phi(u_k,g)$ the Hopf-differential given by \eqref{def:Hopf-diff} and $\Proj$ the $L^2$-orthogonal projection from the space of $L^2$-quadratic differentials onto $\Hol(M,g)$.
\end{lemma}
We recall that Remark~\ref{rmk:la-simple} ensures that the first eigenvalue is simple in the situations considered in our main results, allowing us to apply this formula for $\la_1$ and the corresponding (normalised) eigenfunction $u_1$. 

The goal of the present section is to prove the following result, which assures that in the setting of our main results 
 the  $L^2$-gradient of 
 the first eigenvalue $\la_1\colon\M\to\R$ 
 of $-\Delta_g$ is essentially determined by
$$\na \log(\la_1)\sim \tfrac{1}{8\pi} \Rea (\tilde \Th^1)$$ 
for $\tilde\Th^1$ as in Definition~\ref{def:Th-tilde}.

\begin{thm}\label{thm:1}
Let $(M,g)$ be a closed oriented hyperbolic surface, let $\si^1$ be a disconnecting simple closed geodesic.
\newline
Suppose that $\ell_1=L_g(\si^1)\leq \ell_0$ for $\ell_0=\ell_0(\hat \de,\gamma)$ as in Remark~\ref{rmk:la-simple} and $\hat \de>0$ as usual a lower bound on $\inj_g$ on $M\setminus \Col(\si^1)$.   
Then there exists a number $\alpha\geq 0$ with 
\beq 
\label{claim:alpha} 
\abs{\alpha-\tfrac{1}{8\pi}}\leq C\ell_1 \abs{\log(\ell_1)},\eeq
$C$ depending only on the genus of $M$ and on $\hat \de$, such that
\beq
\label{claim:na-la-2}
\norm{\na\log(\la_1)-\al \Rea (\tilde \Th^1)}_{L^\infty(M,g)}\leq C \ell_1
\eeq
for $\tilde \Th^1\in \Hol(M,g)$ characterised by Definition~\ref{def:Th-tilde}.
\end{thm}

The proof of this result, which seems to be of independent interest and will be the basis of the proofs of our other main results, will be carried out in the remainder of this section and is structured as follows: 
We first argue that it suffices to prove the result in case that $\ell_1\leq \hat \eps$ for a number $\hat\eps=\hat\eps(\gamma,\hat\de)\in (0, 2\min(\hat\de,\arsinh(1)))$ chosen later, as it is otherwise trivially true for  $\al=0$ and a suitably chosen constant $C=C(\gamma,\hat\de)$. We will then use the energy estimates on the first eigenfunction $u_1$ obtained in Section~\ref{subsec:energy} to derive bounds on inner products of the Hopf-differential $\Phi=\Phi(u_1,g)$ with holomorphic quadratic differentials.  These estimates will then be used in the main part of the proof to show that $\na \log\la_1= -\frac{1}{2\la_1} \Rea(P_g^\Hol(\Phi))$ is indeed essentially given by 
$\tfrac{1}{8\pi} \Rea (\tilde \Th^1)$
as described in the theorem.

So let us first consider the case that $\ell_1> \hat \eps=\hat\eps(\hat\de,\gamma)\in (0, 2\min(\hat\de,\arsinh(1)))$. In this case $\inj(M,g)\geq \hat \eps/2$ so the bound \eqref{est:la_rough}
obtained by Schoen-Wolpert-Yau implies that $\la_1$ is bounded away from zero by 
 $\la_1\geq  c_1\hat\eps$. 
The lower bound on the injectivity radius means furthermore that \eqref{est:Linfty-by-L1-1} allows us to 
bound the $L^\infty$-norm of any holomorphic quadratic differential in terms its $L^1$-norm. 
We can thus use the formula for $\na \la_1$ from 
Lemma~\ref{cor:grad-la} to
conclude that
$$  \norm{\na \log(\la_1)}_{L^\infty(M,g)}=C\la_1^{-1} \norm{P_g^\Hol (\Phi(u_1,g))}_{L^\infty(M,g)}\leq C \norm{P_g^\Hol (\Phi(u_1,g))}_{L^1(M,g)}$$
  for a constant $C$ that depends only on $\hat \de$ and the genus of $M$. 
  Moreover, \cite[Proposition 4.10]{RT-neg} implies that 
$
\norm{P^\Hol_g(\Upsilon)}_{L^1(M,g)}\leq C(\gamma) \norm{\Upsilon}_{L^1(M,g)}
$ 
 for any quadratic differential $\Upsilon$. Hence, altogether, we obtain that in this case 
 $$\norm{\na \log(\la_1)}_{L^\infty(M,g)}\leq C\norm{\Phi(u_1,g)}_{L^1(M,g)}\leq C\int_M\abs{d u_1}_g^2dv_g\leq C\la_1,$$
 and thus that the claims of Theorem~\ref{thm:1} hold true for $\al=0$ and $C=C(\hat\de,\gamma)$ as claimed. 

We can thus from now on assume that $\ell_1\leq \hat\eps\leq 2\min(\hat\de,\arsinh(1))$, where $\hat\eps=\hat\eps(\gamma,\hat\de)>0$ is chosen later. We note that this allows us in particular to apply Lemma~\ref{lemma:est-u-thick-main} with $\bar\de=\min(\hat\de,\arsinh(1))$.

In a next step, we now want to combine energy estimates as obtained in Lemma~\ref{lemma:est-u-thick-main} with standard properties of holomorphic quadratic differentials as  recalled in Section~\ref{sec:hol}
 to bound inner products of the Hopf-differential with holomorphic quadratic differentials. 
 These estimates are valid for general eigenfunctions,  though in the present paper will only be applied for $u=u_1$.

  \begin{lemma} \label{lemma:prod-Hopf}
  For any genus $\gamma$ and any number $\bar \de>0$ there exists a constant $C$ so that the following holds true:   
Let $(M,g)$ be any hyperbolic surface and let $u$ be any  eigenfunction of $-\Delta_g$ normalised to  $\Vert u\Vert_{L^2(M,g)}=1$ to an eigenvalue $\lambda$. Let $a_2>0$ be so that
\beq \label{est:u-de-thick-general}
\norm{du}_{L^2(\de\thick(M,g))}^2\leq \frac{a_2}{\de}\lambda^2 \text{ for every } 0< \de \leq  \bar \de
\eeq
Then the Hopf-differential $\Phi(u,g)$ satisfies the following
estimates.
\newline 
For every $\Upsilon\in \Hol(M,g)$ and every $F\subset\bar \de\thick(M,g)$
\beq
\label{est:Hopf-thick}
\abs{\langle \Upsilon, \Phi(u,g)\rangle_{L^2(F,g)}}\leq C a_2\la^2 \norm{\Upsilon}_{L^2(\frac{\bar\de}{2}\thick(M,g))} 
\eeq
while for every simple closed geodesic $\si$ of length 
$\ell=L_g(\si)\leq 2\bar \de$ 
\begin{align}
\label{est:Hopf-decay}
\abs{\langle \Upsilon-b_0(\Upsilon,\Col(\si))dz^2, \Phi(u,g)\rangle_{L^2(\Col(\si))}}&\leq Ca_2\la^2 \norm{\Upsilon}_{L^2(\frac{\bar\de}{2}\thick(M,g))} \\
\abs{\Rea(\langle \i dz^2, \Phi(u,g)\rangle_{L^2(\Col(\si))})}&\leq C\sqrt{a_2}\la^{2}\ell^{-1}.\label{est:Hopf-princ-i}
\end{align}
\end{lemma}

We note that while for general eigenfunctions there  always  exists a sufficiently large number $a_2$ so that \eqref{est:u-de-thick-general} holds true, this number may in general depend on the eigenvalue and other geometric quantities such as $\inj(M,g)$. 

In the setting of Theorem~\ref{thm:1} we know however that \eqref{est:u-de-thick-general} holds true for the constant 
$a_2=C_0(\gamma,\hat \de)$ obtained in Lemma~\ref{lemma:est-u-thick-main} and $\bar \de= 2\min(\hat \de,\arsinh(1))>\ell_1$. Hence in the proofs of our main results we may bound the above inner products of $\Phi(u_1,g)$ simply by   $C\la_1^2 \norm{\Upsilon}_{L^2(\frac{\bar\de}{2}\thick(M,g))} $ respectively by $C\la_1^2$, for a constant $C=C(\gamma,\hat\de)$.

 \begin{proof}[Proof of Lemma~\ref{lemma:prod-Hopf}] 
Let $u$ be a normalised eigenfunction to an eigenvalue $\la$ and let $\bar \de$, $a_2$ be so that \eqref{est:u-de-thick-general} holds true. 
We first note that 
 $\norm{\Phi(u,g)}_{L^1(F,g)}=2\norm{du}_{L^2(F,g)}^2 $ 
 for any subset  $F\subset M$. We hence not only know that 
\beq\label{eq:Phi_L1}
  \norm{\Phi(u,g)}_{L^1(M,g)}=2\la 
  \eeq 
but furthermore get that \eqref{est:u-de-thick-general} implies  
\beq\label{est:Phi_L1-thick}
   \norm{\Phi(u,g)}_{L^1(\bar\de \thick(M,g))}\leq Ca_2\la^2
\eeq
 where here and in the following $C=C(\bar \de, \gamma)$. 
 
 Combined 
 with  \eqref{est:Linfty-by-L1-1} we thus get the first claim of the lemma that for any 
$F\subset \bar \de\thick(M,g)$ and any $\Upsilon\in \Hol(M,g)$
\beqs 
\abs{\langle \Upsilon, \Phi(u,g)\rangle_{L^2(F,g)}}\leq Ca_2\la^2 \norm{\Upsilon}_{L^\infty(\bar \de\thick(M,g))} 
\leq Ca_2 \la^2 \norm{\Upsilon}_{L^2( \frac{\bar\de}{2} \thick(M,g))}.
\eeqs

Let now $\Col(\si)$ be a collar around a simple closed geodesic $\si$ of length $\ell\leq 2\bar \de$ and let $\Upsilon\in\Hol(M,g)$ be any fixed element. We recall that 
the principal part $b_0(\Upsilon)dz^2=b_0(\Upsilon,\Col(\si))dz^2$ of $\Upsilon$ on  $\Col(\si)$ is controlled by \eqref{est:dz2-est} and  \eqref{est:b0-upper}, while on the thick part we can control $\Upsilon$ using \eqref{est:Linfty-by-L1-1}. Combined,  
we may estimate 
\beqa \label{est:proof-prod-Hopf}
& \abs{\langle \Ups-b_0(\Ups)dz^2 ,\Phi \rangle_{L^2(\Col(\si))}}\\
&\qquad \leq \abs{\langle \Ups-b_0(\Ups)dz^2,\Phi \rangle_{L^2(\bar\de\thin(\Col(\si)))}} + 
\abs{\langle \Ups,\Phi \rangle_{L^2(\bar\de\thick(\Col(\si)))} }\\
&\qquad \quad + \abs{b_0(\Ups)}\cdot \norm{dz^2}_{L^\infty(\bar\de\thick(\Col(\si)))} \cdot \norm{\Phi}_{L^1(\bar\de\thick (\Col(\si)))}
 \\
 &\qquad \leq  \abs{\langle \Ups-b_0(\Ups)dz^2,\Phi \rangle_{L^2(\bar\de\thin(\Col(\si)))}} \\
  &\qquad \quad + 
 C \norm{\Phi}_{L^1(\bar\de\thick(M,g))}\cdot \big[\norm{\Upsilon}_{L^\infty( \bar\de \thick(M,g))}+\norm{\Upsilon}_{L^2( \half\arsinh(1) \thick(M,g))}\big]\\
  &\qquad \leq \abs{\langle \Ups-b_0(\Ups)dz^2,\Phi \rangle_{L^2(\bar\de\thin(\Col(\si)))}}+Ca_2 \la^2 \norm{\Upsilon}_{L^2( \frac{\bar \de}{2} \thick(M,g))}
\eeqa
 where we applied \eqref{est:Phi_L1-thick} in the last step. 
To bound the obtained inner product we 
split $\bar \de\thin(\Col(\si))$ into regions of injectivity radius $\inj_g(p)\in[2^{-k-1} \bar\de,2^{-k}\bar\de)$. On such  regions we can bound
$$\norm{\Phi}_{L^1}\leq 2\norm{d u}_{L^2(2^{-k-1}\bar\de\thick(M,g))}^2\leq Ca_2\la^2 (2^{-k}\bar \de)^{-1}$$
using \eqref{est:u-de-thick-general}, while $\Ups-b_0(\Ups)dz^2$ is controlled by \eqref{est:W1}. Combined this gives 
\beqas 
\label{est:inner-prod-phi-decay}
\abs{\langle \Ups-b_0(\Ups)dz^2 ,\Phi \rangle_{L^2(\bar\de\thin(\Col(\si)))}}&
\leq Ca_2\sum_{k\geq 0} e^{-2^k\pi/\bar\de}\cdot (2^{-k}\bar\de)^{-3} \la^2 \norm{\Ups}_{L^2( \bar\de\thick(\Col(\si)))}\\
&\leq  Ca_2\la^2  \norm{\Ups}_{L^2(\bar \de\thick(\Col(\si)))},\eeqas
and inserting this into \eqref{est:proof-prod-Hopf} gives the second claim \eqref{est:Hopf-decay} of the lemma.

To obtain the final claim \eqref{est:Hopf-princ-i} we combine \eqref{def:Hopf-diff} with the angular energy estimate  
\eqref{est:weighted-ang-en-4} that is valid for any eigenfunction and 
 \eqref{est:u-de-thick-general} to conclude that 
\beqas  \label{est:proof-im-inner2}
\abs{\Rea\langle \i dz^2, \Phi(u,g)\rangle_{L^2(\Col(\si))}}&=
2 \abs{\langle \Rea(\Phi),ds\otimes d\th+ d\th\otimes ds\rangle_{L^2(\Col(\si))}}\\
&\leq C \int_{-X(\ell)}^{X(\ell)}\int_{S^1} \abs{u_s} \cdot \abs{u_\theta} \rho^{-2} ds d\theta\\
&
\leq C\bigg(\int_{-X(\ell)}^{X(\ell)}\int_{S^1} \abs{u_s}^2 dsd\th\bigg)^{\half}\bigg(\int_{-X(\ell)}^{X(\ell)}\int_{S^1} \abs{u_\th}^2 \rho^{-4} dsd\th\bigg)^{\half}\\
&\leq C\norm{du}_{L^2(\half \ell\thick(M,g))} \cdot \big[\la^2(1+X(\ell))\big]^{\frac{1}{2}}\\
&\leq C\sqrt{a_2}\ell^{-1} \la^2 .
\eeqas
\vspace{-0.3cm}
\qedhere
\end{proof}

We now prove Theorem~\ref{thm:1} in three steps, establishing first that 
 $\na \log(\la_1)=-\frac{1}{2\la}\Rea(P_g^\Hol(\Phi(u_1,g)))$ is essentially given by the real part of a complex multiple $(\alpha+\i\tilde c)\tilde\Th^1$ of $\tilde \Th^1$, then proving that $\tilde c$ is small, i.e. that the factor $(\al+\i\tilde c)$ is essentially real and finally estimating the size of $\alpha$.

As $\ker(\partial \ell_1)^\perp$ is spanned by $\tilde \Th^1$ 
 we first write 
$$P^\Hol(\Phi(u_1,g))= b\cdot \tilde \Th^1+P_g^{\ker(\partial \ell_1)}(\Phi(u_1,g))
$$
for some $b\in \C$ that is analysed later, and obtain that  $P^\Hol(\Phi(u_1,g))$ is approximately given by $b\cdot \tilde \Th^1$ in the following sense: 

\begin{lemma}\label{lemma:proj-ker-dl}
Suppose that $(M,g)$ is as in Theorem~\ref{thm:1}
with $\ell_1< 2\min(\hat{\de},\arsinh(1))$. 
Then
the orthogonal projection of the Hopf-differential $\Phi(u_1,g)$ of the normalized first eigenfunction $u_1$ onto $\ker(\partial \ell_1)$ is bounded by
$$\norm{P_g^{\ker(\partial \ell_1)}(\Phi(u_1,g))}_{L^{\infty}(M,g)}\leq C\la_1^2$$
where $C$ depends only on the lower bound $\hat{\de}>0$ on $\inj_g\vert_{M\setminus \Col(\si^1)}$ and the genus of $M$.
\end{lemma}

\begin{proof}
We set $w=P_g^{\ker(\partial \ell_1)}(\Phi)$ and recall that 
$b_0(\cdot,\Col(\si^1))=0$ for any element of $\ker(\partial \ell_1)$, so in particular for $w$, compare 
\eqref{def:dell}.
We can thus apply the estimates \eqref{est:Hopf-decay} and \eqref{est:Hopf-thick} of 
Lemma~\ref{lemma:prod-Hopf} (with $F=M\setminus \Col(\si^1)$, $\bar\de=\min(\hat \de, \arsinh(1))$ and $a_2=C_0(\gamma,\hat \de)$)  to obtain 
\beqas
\norm{w}_{L^2(M,g)}^2
&=\langle w,\Phi(u_1,g)\rangle_{L^2(M,g)}= \langle w-b_0^1(w)dz^2,\Phi(u_1,g)\rangle_{L^2(\Col(\si^1))}+\langle w,\Phi(u_1,g)\rangle_{L^2(M\setminus\Col(\si^1),g)}\\
&\leq C\la_1^2 \norm{w}_{L^2(M,g)}.
\eeqas
Combined with 
\eqref{est:W2} this yields that 
$\norm{w}_{L^\infty(M,g)}\leq C\norm{w}_{L^2(M,g)}\leq C\la_1^2$ as claimed. 
\end{proof}

We thus obtain that $\Proj(\Phi)$ is, up to a well controlled error term, a complex multiple $b\tilde \Th^1$ of  $\tilde \Th^1$. 
In a next step we show that this factor $b$ is almost real which, as we shall see later on, is crucial to prove that Dehn-twists on $\Col(\si^1)$ do not have a significant effect on the first eigenvalue. As $\tilde \Th^1$ is a real multiple of the renormalised element $\tilde \Om^1 $ this will follow from

\begin{lemma}\label{lemma:Im-Th1}
Let $(M,g)$ be as in Theorem~\ref{thm:1} with $\ell_1< 2\min(\hat \de, \arsinh(1))$ and let $\tilde\Om^1$ be as in Definition~\ref{defn:basis1}. Then the Hopf-differential of the normalised first eigenfunction $u_1$ satisfies
$$\abs{\Ima\langle P_g^\Hol(\Phi(u_1,g)),\tilde \Om^1\rangle_{L^2(M,g)}} \leq C\ell_1^{1/2}\la_1^2, \text{ where } C=C(\hat{\de},\gamma).$$
\end{lemma}

\begin{proof}[Proof of Lemma~\ref{lemma:Im-Th1}]
We note that 
$\langle P_g^\Hol(\Phi(u_1,g)),\tilde \Om^1\rangle_{L^2(M,g)}=\langle \Phi(u_1,g),\tilde \Om^1\rangle_{L^2(M,g)}$ is essentially given by the inner product of $\Phi=\Phi(u_1,g)$  and the principal part $b_0^1(\tilde \Om^1)dz^2$ of $\tilde \Om^1$ on $\Col(\si^1)$; to be more precise,
combining Lemma~\ref{lemma:prod-Hopf} with \eqref{est:Om-tilde-thick} 
yields 
\beqas 
\abs{ \langle \Phi,\tilde \Om^1\rangle_{L^2(M,g)}
-\langle \Phi,b_0^1(\tilde \Om^1)dz^2\rangle_{L^2(\Col(\si^1))}}
& = \abs{\langle \Phi,\tilde \Om^1\rangle_{L^2(M\setminus \Col(\si^1))}+\langle \Phi,\tilde \Om^1-b_0^1(\tilde \Om^1)dz^2\rangle_{L^2(\Col(\si^1))}}\\
&
\leq C \norm{\tilde \Om^1}_{L^2(\frac{\bar\de}{2}\thick(M,g))}\cdot \la_1^2 
\leq C\ell_1^{3/2}\la_1^2
\eeqas 
for $\bar\de=\min(\hat\de,\arsinh(1))$ and a constant $C=C(\gamma, \hat \de)$. 

Since the principal part of $\tilde \Om^1$ on $\Col(\si^1)$ is real, Lemma~\ref{lemma:prod-Hopf} furthermore gives 
\beqas 
\abs{\text{Im}\langle \Phi,b_0^1(\tilde\Om^1)dz^2\rangle_{L^2(\Col(\si^1))}} &= \abs{b_0^1(\tilde\Om^1)}\cdot \abs{\text{Re} \langle\Phi,\i dz^2\rangle_{L^2(\Col(\si^1))}}
\leq C\ell_1^{3/2} \cdot C\la_1^2\ell_1^{-1}
\leq C\ell_1^{1/2}\la_1^2
\eeqas
where we used \eqref{est:b0-trivial-small} in  the penultimate step. 
 Combined  this yields the claim of the lemma. 
\end{proof}

At this stage we thus know 
that we can write 
\beq 
\na \log(\la_1) = - P^{\Rea(\Hol)}_g(\Rea(\frac1{2\la_1}\Phi(u_1,g))= \Rea(P^\Hol_g(-\frac{1}{2 \la_1}\Phi(u_1,g)))
= \Rea(\al \tilde \Th^1 +R )\label{eq:writing-grad}
\eeq
for a real number $\al$
and an error term of the form 
\beq \label{eq:error-R}
R= \i c_0\tilde \Om^1-\frac{1}{2 \la_1}P_g^{\ker(\partial \ell_1)}\Phi(u_1,g), \text{ for some } c_0\in \R,\eeq
where we note that
$\abs{c_0}=\frac{1}{2\la_1}\abs{\Ima\langle \Phi, \tilde \Om^1\rangle}\leq C\ell_1^{1/2} \la_1 $ thanks to Lemma~\ref{lemma:Im-Th1}.
Since $\norm{\tilde\Om^1}_{L^\infty(M,g)}\leq C\ell_1^{-1/2}$ by \eqref{est:RT-neg3}, while 
the second term in the above estimate is controlled by Lemma~\ref{lemma:proj-ker-dl}, we have  
\beq 
\label{est:error-R}
\norm{R}_{L^\infty(M,g)} \leq C\la_1\leq C\ell_1.\eeq
This establishes the claim \eqref{claim:na-la-2} of the theorem. 

To prove the remaining claim \eqref{claim:alpha} of
Theorem~\ref{thm:1} we now show that the coefficient $\al$ in \eqref{eq:writing-grad} satisfies
$\abs{\al}=\frac{1}{8\pi}+O(\ell_1\log(\ell_1))$ and, in a later step, that for sufficiently small $\ell_1$ also $\al>0$.

We recall from \eqref{eq:Phi_L1} that $\norm{ \Phi}_{L^1(M,g)}=2\la_1$,  while 
$\norm{\tilde\Th^1}_{L^1(M,g)}=8\pi+O(\ell_1)$ by   \eqref{est:L2-tilde-theta}. 
So as \eqref{eq:writing-grad} and \eqref{est:error-R} imply that 
$\al\tilde\Th^1+R=-\frac{1}{2\la_1} P_g^\Hol(\Phi)$ with $\norm{R}_{L^\infty}=O(\ell_1)$,  we get
\beqas
\abs{\al}& 
=\norm{\tilde \Th^1}_{L^1(M,g)}^{-1}\big[\tfrac{1}{2\la_1} \norm{ P^\Hol_g(\Phi)}_{L^1(M,g)} +O(\ell_1)\big]=\frac{1}{ (8\pi+O(\ell_1))}\cdot 
 \frac{\norm{P^\Hol_g(\Phi)}_{L^1(M,g)}}{\norm{ \Phi}_{L^1(M,g)} }+O(\ell_1).
\eeqas
To obtain the desired bound on $\abs{\al}$, it thus suffices to prove that
$$\babs{\norm{P_g^\Hol(\Phi)}_{L^1(M,g)}-
\norm{\Phi}_{L^1(M,g)}
} \leq C \ell_1^2\abs{\log(\ell_1)}.$$
As \eqref{est:la_rough} implies that $\la_1 \leq C\ell_1$, this follows from 

\begin{lemma}
\label{lemma:est-Phi-hol}
Let $(M,g)$ be as in Theorem~\ref{thm:1} with $\ell_1< 2\bar \de:= 2\min(\hat \de, \arsinh(1))$. Then the Hopf-differential $\Phi=\Phi(u_1,g)$ of the normalised first eigenfunction $u_1$ satisfies
\beqs \label{est:Phi-minus-hol}
\norm{\Phi-P_g^\Hol(\Phi)}_{L^1(M,g)}\leq C\la_1^2 \abs{\log(\ell_1)}, \text{ for some } C=C(\gamma, \hat{\de}) .
\eeqs
\end{lemma}

The crucial ingredient in the proof of this lemma is the following uniform Poincar\'e estimate for quadratic differentials from the joint work \cite{RT2} of P. Topping and the second author

\begin{thm}[Theorem~1.1 of \cite{RT2}]\label{thm:Poincare}
For any genus $\gamma\geq 2$ there exists a constant $C_\gamma<\infty$ so that for any closed oriented hyperbolic surface $(M,g)$ 
of genus $\gamma$
the distance of any  
$L^2$-quadratic differential $\Upsilon$ from its holomorphic part is bounded by  
\beqs 
\label{est:Poincare}
\norm{\Upsilon-P_g^\Hol(\Upsilon)}_{L^1(M,g)}\leq C_\gamma\norm{\bar\partial \Upsilon}_{L^1(M,g)}.
\eeqs
\end{thm}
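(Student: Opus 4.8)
The plan is to argue by contradiction, exploiting the fact that the $L^1$-norm of a quadratic differential is conformally invariant (up to the harmless factor coming from the normalisation $\abs{dz^2}_g=2\rho^{-2}$), so that the asserted inequality really only depends on the underlying conformal structure and can thus be hoped to hold with a constant uniform across degenerating families. Since $\bar\partial(P_g^\Hol(\Upsilon))=0$ and $\Upsilon-P_g^\Hol(\Upsilon)\perp\Hol(M,g)$, it suffices to prove $\norm{\Upsilon}_{L^1(M,g)}\leq C_\gamma\norm{\bar\partial\Upsilon}_{L^1(M,g)}$ for $\Upsilon\in L^2$ with $\Upsilon\perp\Hol(M,g)$ (and $\bar\partial\Upsilon\in L^1$, else there is nothing to show). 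Suppose this fails: there are closed oriented hyperbolic surfaces $(M,g_i)$ of genus $\gamma$ and quadratic differentials $\Upsilon_i\perp\Hol(M,g_i)$ with $\norm{\Upsilon_i}_{L^1(M,g_i)}=1$ but $\norm{\bar\partial\Upsilon_i}_{L^1(M,g_i)}\to 0$; the goal is to derive a contradiction.

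By Mumford (Deligne--Mumford) compactness we may pass to a subsequence along which $(M,g_i)$ converges to a complete finite-area hyperbolic surface $(\Sigma,g_\infty)$ with cusps -- the noded limit obtained by pinching a fixed (bounded) number of disjoint simple closed geodesics whose lengths $\ell_j=\ell_j(i)\to 0$ -- with smooth convergence on compact subsets of $\Sigma$, i.e.\ uniformly on the part of $M$ that stays away from the pinching collars. Standard interior estimates for $\bar\partial$ (via the Cauchy--Pompeiu representation together with a good-radius choice) bound $\Upsilon_i$ locally on this thick part in terms of $\norm{\Upsilon_i}_{L^1}+\norm{\bar\partial\Upsilon_i}_{L^1}$, uniformly in $i$; splitting $\Upsilon_i$ locally into a holomorphic piece and a correction solving the $\bar\partial$-equation with data $\bar\partial\Upsilon_i$ (hence small) and invoking interior estimates for holomorphic functions, we obtain a further subsequence with $\Upsilon_i\to\Upsilon_\infty$ on compact subsets of $\Sigma$, where $\bar\partial\Upsilon_\infty=0$, so $\Upsilon_\infty$ is holomorphic on $\Sigma$. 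Since a holomorphic quadratic differential of finite $L^1$-norm on a punctured Riemann surface extends meromorphically with at worst simple poles at the punctures, $\Upsilon_\infty$ is an integrable holomorphic quadratic differential on $\Sigma$, with $\norm{\Upsilon_\infty}_{L^1(\Sigma)}\leq 1$ by lower semicontinuity.

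The crux is to show that no $L^1$-mass escapes into the deep interior of the pinching collars, so that in fact $\norm{\Upsilon_\infty}_{L^1(\Sigma)}=1$ and hence $\Upsilon_\infty\neq 0$. On a pinching collar $\Col(\si^j_i)\cong(-X(\ell_j),X(\ell_j))\times S^1$ we Fourier-expand $\Upsilon_i=\sum_{n}\psi_{i,n}(s)e^{in\theta}\,dz^2$. For $n\neq 0$ the coefficient $\psi_{i,n}$ solves a first-order ODE with forcing given by the corresponding mode of $\bar\partial\Upsilon_i$, whose solution decays exponentially away from the two edges of the collar; using that $\Upsilon_i$ is already controlled at those edges (which lie in the thick part, by the interior estimate) together with a Cauchy--Schwarz summation over $n$, the non-constant part of $\Upsilon_i$ contributes to the collar $L^1$-mass an amount bounded by the edge values of $\Upsilon_i$ plus $\norm{\bar\partial\Upsilon_i}_{L^1}$. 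The constant mode $\psi_{i,0}(s)\,dz^2$ is the genuinely dangerous term, since $dz^2$ carries $L^1$-mass of order $\ell_j^{-1}$ on the collar; here one uses, first, that $\psi_{i,0}$ is nearly constant (its total variation being controlled by $\norm{\bar\partial\Upsilon_i}_{L^1}$) and, second, the orthogonality of $\Upsilon_i$ to the element of $\Hol(M,g_i)$ that is concentrated on $\Col(\si^j_i)$ and close there to a multiple of $dz^2$ -- Lemma~\ref{lemma:RT-neg} supplies such an element $\tilde\Om^j$ with more than enough precision (a cruder construction would suffice). Splitting $\tilde\Om^j$ into its $dz^2$-part and its collar-decay part and testing $0=\langle\Upsilon_i,\tilde\Om^j\rangle_{L^2(M,g_i)}$ against $\norm{\Upsilon_i}_{L^1}=1$, the estimates \eqref{est:RT-neg1}--\eqref{est:RT-neg3} bound $\babs{\langle\Upsilon_i,dz^2\rangle_{L^2(\Col(\si^j_i))}}$, which together with the near-constancy of $\psi_{i,0}$ and the elementary size estimates for $dz^2$ on a thin collar forces the $L^1$-mass of $\psi_{i,0}\,dz^2$ there to be $O(\ell_j^2+\norm{\bar\partial\Upsilon_i}_{L^1})\to 0$. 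Consequently, if $\Upsilon_\infty$ were $0$, then the edge values of $\Upsilon_i$ would tend to $0$ and all of the thick-part, non-constant-collar, and constant-collar $L^1$-mass of $\Upsilon_i$ would tend to $0$, contradicting $\norm{\Upsilon_i}_{L^1(M,g_i)}=1$; therefore $\norm{\Upsilon_\infty}_{L^1(\Sigma)}=1$ and $\Upsilon_\infty\neq 0$.

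It remains to pass the orthogonality to the limit. For any integrable holomorphic quadratic differential $\Phi$ on $\Sigma$ there exist $\Phi_i\in\Hol(M,g_i)$ with $\Phi_i\to\Phi$ smoothly on the thick part and with uniformly bounded $L^\infty$-norm on the collars (this is the classical convergence $\Hol(M,g_i)\to\Hol(\Sigma)$ of the $3(\gamma-1)$-dimensional spaces of holomorphic quadratic differentials under degeneration). Since $\Upsilon_i\to\Upsilon_\infty$ on the thick part, $\Phi$ decays into the interior of the collars, and the collar $L^1$-mass of $\Upsilon_i$ tends to $0$, we get $0=\langle\Upsilon_i,\Phi_i\rangle_{L^2(M,g_i)}\to\langle\Upsilon_\infty,\Phi\rangle_{L^2(\Sigma)}$, so $\Upsilon_\infty\perp\Hol(\Sigma)$. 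But $\Upsilon_\infty\in\Hol(\Sigma)$ and $\norm{\Upsilon_\infty}_{L^1(\Sigma)}=1\neq 0$, contradicting $\Hol(\Sigma)\cap\Hol(\Sigma)^\perp=\{0\}$. This contradiction establishes the estimate. The main obstacle is the collar analysis of the third paragraph -- in particular ruling out escape of $L^1$-mass through the degenerating constant mode $dz^2$ by means of the orthogonality to $\Hol(M,g_i)$, and organising the Fourier/ODE estimates so that all error terms remain uniformly controlled as $\ell_j\to 0$.
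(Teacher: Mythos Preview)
The paper does \emph{not} prove this statement: Theorem~\ref{thm:Poincare} is simply quoted as Theorem~1.1 of \cite{RT2} and used as a black box in the proof of Lemma~\ref{lemma:est-Phi-hol}. There is therefore no ``paper's own proof'' to compare against.

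That said, your compactness--contradiction strategy is the natural one and is almost certainly close in spirit to the argument in \cite{RT2}. A few remarks on the sketch:

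\begin{itemize}
\item Your opening claim that the inequality ``only depends on the underlying conformal structure'' is not quite right: while $\norm{\Upsilon}_{L^1}$ is conformally invariant, $\norm{\bar\partial\Upsilon}_{L^1(M,g)}$ carries a factor $\rho^{-1}$ from $\abs{d\bar z\otimes dz^2}_g$, so it depends on the hyperbolic representative. This is harmless (indeed helpful, since $\rho^{-1}$ is large on thin collars), but the statement is inaccurate.
\item The zero-mode analysis you flag as ``the genuinely dangerous term'' is indeed the crux, and your asserted bound $O(\ell_j^2+\norm{\bar\partial\Upsilon_i}_{L^1})$ is correct, but it does not follow just from ``near-constancy of $\psi_{i,0}$ and elementary size estimates for $dz^2$''. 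Writing $\psi_{i,0}(s)=c+r(s)$ with $c=\psi_{i,0}(0)$, the key observations are (i) $\rho(t)\,(X(\ell)-\abs{t})\leq C$ uniformly, which gives $\int\abs{r}\,ds\leq C\norm{\bar\partial\Upsilon_i}_{L^1}$, and (ii) the analogous weighted bound $\rho(t)\int_{\abs{t}}^{X}\rho^{-2}\leq C\ell^{-2}$, which combined with the orthogonality to $\tilde\Om^j$ gives $\abs{c}\leq C\ell\,\norm{\bar\partial\Upsilon_i}_{L^1}+C\ell^3$. Without (i) a spurious $\abs{\log\ell}$ appears; you should make this step explicit.
\item For passing the orthogonality to the limit you should approximate $\Phi\in\Hol(\Sigma,g_\infty)$ by elements $\Phi_i\in W_i$ as in Lemma~\ref{lemma:W}/Corollary~\ref{cor:W} (which do have uniformly bounded $L^\infty$-norm by \eqref{est:W2}), not by arbitrary elements of $\Hol(M,g_i)$; your wording already suggests this, but it should be said clearly since general $\Phi_i\in\Hol(M,g_i)$ with nonzero principal part have $\norm{\Phi_i}_{L^\infty}\sim\ell^{-2}$.
\item Be alert to potential circularity: you invoke Lemma~\ref{lemma:RT-neg} (from \cite{RT-neg}) to control the constant mode, but one should check that \cite{RT-neg} does not itself rely on \cite{RT2} for this particular lemma. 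In any case the full strength of Lemma~\ref{lemma:RT-neg} is not needed here; a crude construction of an element of $\Hol(M,g_i)$ that approximates $dz^2$ on a single collar to order $O(\ell_j^{3/2})$ suffices and can be obtained directly.
\end{itemize}
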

We note that it is crucial for our application that $C_\gamma$ is a topological constant, depending only on the genus and not on geometric quantities such as the diameter of $(M,g)$. 


\begin{proof}[Proof of Lemma~\ref{lemma:est-Phi-hol}]
To derive the lemma from Theorem~\ref{thm:Poincare}
we need to prove that $\Phi=\Phi(u_1,g)$ is \textit{almost holomorphic} in the sense that the estimate
\beqs 
\label{est:d-bar-Phi}
\norm{\bar\partial \Phi}_{L^1(M,g)}\leq C\la_1^2\abs{\log(\ell_1)}
\eeqs
holds true for a constant $C$ that depends only on the genus and on $\hat{\de}$.  

To this end we recall that the antiholomorphic derivative of the Hopf-differential of maps from a surface to an arbitrary Riemannian manifolds is bounded in terms of the tension field, so in our situation simply by the Laplacian. To be more precise, working in local isothermal coordinates $(x,y)$, $z=x+\i y$, we may write 
$\bar \partial\Phi=\frac{1}{2}\left( \partial_x\phi +\i \partial_y\phi \right) d\bar z\otimes dz^2= (\partial_{xx} u_{1}+\partial_{yy} u_{1})\cdot (\partial_x u_1-\i \partial_y u_1) d \bar z\otimes dz^2$. Combined with 
\eqref{eq:dz-rho} we thus get
$$\abs{\bar \partial\Phi}_g\leq \rho^2 \abs{\Delta_g u_1} \cdot \rho \abs{du_1}_{g} \cdot \abs{dz}_g^3= 2\sqrt{2} \abs{\Delta_g u_1} \cdot\abs{du_1}_{g}.$$
Since our first eigenfunction $u_1$ is uniformly bounded, c.f. Remark~\ref{rmk:L-infty}, with $\norm{u_1}_{L^2}=1$ we thus have 
\beqa \label{est:bar-Phi-proof}
\norm{\bar\partial \Phi}_{L^1(M,g)}& 
\leq C\la_1 \int \abs{u_1}\cdot \abs{du_1}_g dv_g \leq C\la_1 \bigg[ \norm{d u_1}_{L^2(M\setminus \Col(\si^1))}
+ \norm{u_1}_{L^\infty(M,g)}\int_{\Col(\si^1)}\abs{du_1}_g dv_g\bigg]\\
&\leq C\la_1^2+ C\la_1 \int_{\Col(\si^1)}\abs{du_1}_g dv_g
\eeqa
where we used the energy estimate \eqref{est:u-de-thick} of Lemma~\ref{lemma:est-u-thick-main} on $M\setminus \Col(\si^1)\subset \bar{\de}\thick(M,g)$ in the last step.
To obtain a bound of $C\ell_1\abs{\log(\ell_1)}$ for the last integral instead of just the trivial bound of $C\norm{du_1}_{L^2}\leq C\ell_1^{1/2}$, we note that 
$du_1$ is small near the ends of the collar while regions near the centre of the collar have small volume. 
We thus split the collar into subsets 
$$C_k:=\{p\in\Col(\si^1): 2^{k-1}\ell_1\leq \inj_g(p)<2^k\ell_1\} \quad 0\leq k\leq \bar K$$
whose total number is bounded by $\abs{\bar K}\leq C  \abs{\log(\ell_1)} $ as  $\inj_g$ is bounded from above uniformly.
Combining the bound on  $\text{Area}_g(C_k)\leq \text{Area}_g(2^k\ell_1\thin(\Col(\si^1)))\leq C 2^k \ell_1$ from \eqref{est:area-thin}  with 
 Lemma~\ref{lemma:est-u-thick-main} gives that for every $k$
\beqs \label{est:split_du}
\int_{C_k} \abs{du_1}_{g} dv_g \leq \text{Area}_g(C_k)^{1/2}\norm{d u_1}_{L^2(2^{k-1}\ell_1\thick(\Col(\si^1)))}\leq C\cdot (2^k\ell_1)^{1/2}\cdot \big(\frac{1}{2^k\ell_1} \la_1 ^2)^{1/2}\leq C\la_1. 
\eeqs
Thus
\eqref{est:bar-Phi-proof} reduces to 
$$
\norm{\bar\partial \Phi}_{L^1(M,g)}\leq C\la_1^2+C\bar K \la_1^2 \leq C\la_1^2 \abs{\log(\ell_1)}
$$
which is the bound that we needed to derive the lemma from Theorem~\ref{thm:Poincare}.
\end{proof}

Having thus established that the coefficient $\al$ in \eqref{eq:writing-grad} is so that $\abs{\al}=\frac{1}{8\pi}+O(\ell_1\log(\ell_1))$ we finally complete the proof of Theorem~\ref{thm:1} by showing that $\al>0$ for sufficiently small $\ell_1$.  
We recall that $\al$ is defined by 
\eqref{eq:writing-grad} and \eqref{eq:error-R} and hence characterised by
$$\al=-\tfrac{1}{2\la_1} \Rea\lan \Phi(u_1,g), \tfrac{\tilde \Th^1}{\norm{\tilde \Th^1}_{L^2}^2}\ran.$$
As the first eigenvalue is simple, we
know that the normalised first eigenfunction depends continuously on the
metric, and one can easily check that also $g\mapsto \tilde \Th^1(g)$ is continuous. Hence $\al$ itself depends continuously on the 
Fenchel-Nielsen coordinates. As $\abs{\al}$ is bounded away from zero for sufficiently small $\ell_1$, it must thus have constant sign for $\ell_1\in (0,\hat \eps)$, for a sufficiently small number $\hat \eps =\hat\eps(\hat\de,\gamma)>0$. 

It hence remains to exclude the possibility that $\al<0$ for all $\ell_1\in (0,\hat\eps)$. As we shall see in \eqref{est:al-needed} we have that 
$\abs{\tfrac{\partial \la_1}{\partial \ell_1}- \al \tfrac{8\pi\la_1}{\ell_1}}\leq  C\ell_1^3$. At the same time the results of Burger imply that 
$\frac{\la_1}{\ell_1}\to C_{top}$. So if $\al$ was negative, and hence $\al=-\frac{1}{8\pi}+O(\ell_1\log(\ell_1))$, we would obtain that $\tfrac{\partial \la_1}{\partial \ell_1}=-1+O(\ell_1\log(\ell_1))<0$ for small values of $\ell_1$. This would of course lead to a contradiction as $\la_1>0$ with $\la_1\to 0$ as $\ell_1\to 0$. Hence indeed $\al=\frac{1}{8\pi}+O(\ell_1\log(\ell_1))$ as claimed in the theorem.

\subsection{Proof of Theorem~\ref{thm:FN}}
\label{subsec:proof_thmFN}
$ $

As the eigenvalues are invariant under pull-back of the metrics by diffeomorphisms, we know from Section~\ref{sec:dual} that the derivatives of any simple eigenvalue with respect to the Fenchel-Nielsen coordinates are given by 
\beq\label{eq:deriv-evs-FN}
\frac{\partial \la}{\partial\psi_j}=\langle \na \la, \Rea(\Psi^j )\rangle  \text{ and } \frac{\partial \la}{\partial\ell_j}= \langle \na \la, \Rea(\La^j )\rangle.
\eeq 
Here and in the following $\{\La^j,\Psi^j\}$ denotes the dual basis to the real differentials of the Fenchel-Nielsen coordinates that was defined in Definition~\ref{defn:basis1} and  we recall that $\La^j,\Psi^j$ can be described in terms of the dual bases $\{\Om^j\}$ and $\{\Th^j\}$ of the complex differentials $\partial \ell_j$ as explained in Proposition~\ref{prop:La-Psi}. 

As the previous section gives a characterisation of $\na \la_1$  in terms of $\tilde \Th^1$ from Definition~\ref{def:Th-tilde}, we first use Theorem~\ref{thm:1} to derive a closely related expression for $\na \la_1$ in terms of the bases 
$\{\Th^j\}$ and $\{\Om^j\}$ from Definition~\ref{defn:basis1}. 

To be more precise, we claim that 
 the results of the previous section imply that
\beq \label{eq:writing_na_la_new}
\na \la_1= \al \la_1 \Rea(\Th^1)+ \tilde c\cdot \Rea( \i \Om^1) +\sum_{k\geq 2}\Rea(d_k \Om^k) \eeq
for the same  $\al$ as obtained in the proof of Theorem~\ref{thm:1}
and coefficients
\beq \label{est:coeff_proof}
\tilde c\in \R \text{ satisfying } \abs{\tilde c}\leq C \ell_1^{5/2} \text{ as well as } d_j\in \C \text{ with } \abs{d_j}\leq C\ell_1^2.\eeq

Here and in the following $C$ is allowed to depend on $\hat\de$ and the genus (and so also the upper bound $\bar L=\bar L(\gamma,\ell_0(\hat\de, \gamma))$ on all $\ell_j$).

To see that $\na \la_1$ is of the above form we recall from
\eqref{eq:writing-grad} and \eqref{eq:error-R} 
 that 
$$ \na \la_1= \al \la_1 \Rea(\tilde\Th^1)+ \Rea(\i \la_1 c_0\tilde \Om^1 +w) $$
for some  $c_0\in \R$ with $\abs{c_0}\leq C\ell_1^{1/2}\la_1 \leq C\ell_1^{3/2}$ and $w=-\frac12P^{\ker(\partial \ell_1)}_g(\Phi)$. We also recall that Lemma~\ref{lemma:proj-ker-dl} implies that $\norm{w}_{L^\infty(M,g)}\leq C\la_1^2\leq C\ell_1^{2}$. 

We then use Proposition~\ref{prop:Th-tildeTh}  to write  $\tilde \Th^1=\Th^1-v^1$ and $\tilde \Om^1=\beta_1^{-1} (\Om^1-w^1)$ for elements $w^1,v^1$ of $\ker(\partial \ell_1)$ with $\norm{v^1}_{L^\infty} \leq C\ell_1$ and $\norm{w^1}_{L^\infty}\leq C\ell_1^{3/2}$ 
and a coefficient $\beta_1\in \R^+$ with $\beta_1^{-1}\leq \eps_2^{-1}\leq C$. As $\ker(\partial \ell_1)$ is spanned by $\{\Om^j\}_{j\neq 1}$ we may thus write 
 $\na \la_1$ in the form \eqref{eq:writing_na_la_new} for the same number $\al$ as obtained in the proof of Theorem~\ref{thm:1} and 
a number $\tilde c\in \R$ which is bounded by 
$\abs{\tilde c}\leq C \cdot \abs{c_0}\la_1\leq C\ell_1^{5/2}$. Furthermore, the coefficients $d_k\in \C$  must be so that 
$$\norm{\sum_{j\geq 2} d_k \Om^k}_{L^\infty(M,g)}
\leq C\la_1(\abs{c_0}\norm{w^1}_{L^\infty(M,g)}+\norm{v^1}_{L^\infty(M,g)})+\norm{w}_{L^\infty(M,g)}
\leq 
C\ell_1 \cdot \la_1+\ell_1^2\leq C\ell_1^2,$$ which, by  \cite[Remark 2.10]{holo-paper}, implies that 
$
\abs{d_k}\leq C\ell_1^2$
for every $k$ as claimed.

We now estimate the derivatives of $\la_1$ with respect to the Fenchel-Nielsen coordinates by combining the above expressions \eqref{eq:deriv-evs-FN} and \eqref{eq:writing_na_la_new} 
 with the bounds on $\Om^j$, $\La^j$ and $\Psi^j$ and their inner products from Section~\ref{sec:dual}. 
 
We begin by analysing the 
derivatives of $\la_1$ with respect to the twist coordinates, i.e. by estimating $\langle \na \la_1,\Rea(\Psi^j)\rangle$. Here we crucially use that Wolpert's twist-length duality assures that $\Psi^j$ is  orthogonal to $\ker(\partial \ell_j)=\text{span}\{\Om^i\}_{i\neq j}=\text{span}\{\Th^i\}_{i\neq j}$, compare  \cite[Theorem 2.10]{Wolpert82}, so that the inner product of $\Rea(\Psi^j)$ with most terms in \eqref{eq:writing_na_la_new} vanishes. 

Hence, if $j\geq 2$, we 
obtain the claimed bound of 
\beqs 
\babs{\frac{\partial \la_1}{\partial\psi_j}}= \babs{\langle \na \la_1, \Rea(\Psi^j )\rangle}=  \babs{\langle  \Rea(d_j\Om^j), \Rea(\Psi^j)\rangle}\leq C \norm{\Psi^j}_{L^2} \abs{d_j}\leq C\ell_j^{3/2}\ell_1^2\leq C\ell_1^2,
\eeqs
where we use \eqref{est:coeff_proof} and \eqref{est:L2-Psi-upper} in the penultimate step. Here and in the following 
all norms and inner products computed over all of $(M,g)$.

For $j=1$ we obtain by the same argument that
\beqas 
\babs{\frac{\partial \la_1}{\partial \psi_1}}&= \norm{\Psi^1}_{L^2} \cdot\babs{ 
\al\la_1 \langle \Rea(\Th^1), \Rea(\tfrac{\Psi^1}{\norm{\Psi^1}_{L^2}})\rangle + \tilde c \langle \Rea(\i\Om^1), \Rea(\tfrac{\Psi^1}{\norm{\Psi^1}_{L^2}})\rangle }\\
&\leq C\ell_1^{3/2} \cdot\bigg[\ell_1 \norm{\Th^1}_{L^2} \cdot 
\abs{\langle \Rea(\Om^1), \Rea(\tfrac{\Psi^1}{\norm{\Psi^1}_{L^2}})\rangle}
+\abs{\tilde c} \bigg]
\eeqas
where we use 
\eqref{est:L2-Psi-upper} as well as $\la_1\leq C\ell_1$ in the second step. 

We recall that 
$\norm{\Th^1}_{L^2}\leq C\ell_1^{-1/2}$, see~\eqref{est:Th-L2-upper}, and that 
the above inner product is controlled by the estimate \eqref{est:inner-Om-Psi-same} of Lemma~\ref{lemma:inner-prod}. Together with the bound on $\tilde c$ from \eqref{est:coeff_proof} we hence obtain 
$$\babs{\frac{\partial \la_1}{\partial \psi_1}}
\leq  C\ell_1^{3/2} \cdot[\ell_1 \ell_1^{-1/2} \ell_1^3 
+\ell_1^{5/2} ]\leq C\ell_1^4$$
as claimed in the theorem.

We now turn to the proof of the bounds on the derivatives of $\la_1$ with respect to the length coordinates. 
As $\na\la_1$ is described by \eqref{eq:writing_na_la_new} we know that  
\beq \label{eq:expr-dla}
\frac{\partial \la_1}{\partial \ell_j} =
\langle \na \la_1, \Rea\La^j\rangle=  \al \la_1 \cdot \langle \Rea \Th^1,\Rea\La^j\rangle+R_j 
\eeq
for a remainder term $R_j$ which, thanks to \eqref{est:coeff_proof}, is bounded by 
\beqa \label{est:Rj}
\abs{R_j} &\leq \abs{\tilde c}\cdot \abs{ \langle \Rea\i\Om^1,\Rea\La^j\ran}+\sum_{k\geq 2} \abs{ \langle \Rea (d_k\Om^k),\Rea\La^j\ran}\\
&\leq 
C\ell_{1}^{5/2} \abs{\langle \Rea\i\Om^1,\Rea\La^j\rangle} +C\ell_1^2 \cdot \max_{k\geq 2} \abs{\lan \Om^k,\La^j\rangle}.
\eeqa
 We first consider the case that 
$j\neq 1$. In this case $\ell_j\geq 2\bar \de$ which, by \eqref{est:La-minus-Th}, implies that $\norm{\La^j}_{L^2}\leq C$ and thus that the second term in  \eqref{est:Rj} is bounded by $C\ell_1^2$. To bound the first term 
we can apply the estimate \eqref{est:inner-Om-La-diff} of  Lemma~\ref{lemma:inner-prod} and hence obtain that for $j\neq 1$ 
 $$\abs{R_j}\leq C \ell_1^{5/2} \ell_j\ell_1^{3/2}+C\ell_1^2\leq C \ell_1^2. $$

Furthermore as $\la_1\leq C\ell_1$ and $\norm{\Th^1}_{L^2}\leq C\ell_1^{-1/2}$ the same  estimate \eqref{est:inner-Om-La-diff} 
implies that for $j\neq 1$ also the main term in 
 \eqref{eq:expr-dla} 
is controlled by 
 $$\abs{\al \la_1 \langle \Rea \Th^1,\Rea\La^j\rangle}\leq C\ell_1 \norm{\Th^1}_{L^2} \abs{\langle \Om^1, \La^j\rangle} \leq C\ell_1 \ell_1^{-1/2}\ell_j\ell_1^{3/2}=C\ell_1^2.
 $$
 Combined we thus obtain the claimed bound of $\abs{\frac{\partial \la_1}{\partial \ell_j}}\leq C\ell_1^2$ for $j\neq 1$. 
 
Finally, let  $j=1$. Then the remainder term from \eqref{est:Rj} can again be bounded using Lemma~\ref{lemma:inner-prod}, now using both \eqref{est:inner-Th-La-same-Im} and \eqref{est:inner-Om-La-diff} to get  
$$\abs{R_1}\leq  C\ell_1^{5/2}\ell_1^{5/2}+C\ell_1^2\ell_1\max_{k\geq 2} \ell_k^{3/2} \leq C\ell_1^3.$$

To analyse the main term in \eqref{eq:expr-dla} in case $j=1$ we note that the
estimate \eqref{est:inner-Th-La-same-Re} of Lemma~\ref{lemma:inner-prod} combined with \eqref{est:L^1-the1-precise} 
implies that 
$\abs{ \langle \Rea \Th^1,\Rea\La^1\ran-\frac{8\pi}{\ell_1}}\leq C\ell_1^2$. 
Combined with the above bound on $R_1$ we thus conclude that 
\beqa \label{est:al-needed}
\abs{\tfrac{\partial \la_1}{\partial \ell_1}- \al\la_1 \tfrac{8\pi}{\ell_1}}\leq C\abs{\al} \la_1 \ell_1^2+\abs{R_1}\leq C\ell_1^3.
\eeqa

We note that up to this point we have only ever used that $\al\in\R$ is bounded uniformly, and have not used that $\al\geq 0$, which justifies the application of the above estimate in the last part of the proof of Theorem~\ref{thm:1} where we show that $\al\geq 0$. 

Finally using that $\al=\frac{1}{8\pi}+O(\ell_1\log(\ell_1))$ we obtain that indeed 
$$\abs{\tfrac{\partial \la_1}{\partial\ell_1}-  \tfrac{\la_1}{\ell_1}}\leq C\ell_1 \abs{\log(\ell_1)} (1+ \tfrac{\la_1}{\ell_1})\leq  C\ell_1 \abs{\log(\ell_1)} ,$$ completing the proof of Theorem~\ref{thm:FN}. 

\subsection{Proof of Corollary~\ref{thm:C0-est}}\label{subsec:cor}
$ $

Let $M$ be a closed oriented surface of genus $\gamma\geq 2$ and let $\bar \si$ be a simple closed curve that disconnects $M$ into two connected components.  Let $\mathcal{\bar E}_1=\{\bar \si_1^j\}_{j=1}^{3(\gamma-1)}$ be a collection of disjoint simple closed curves which decomposes $M$ into pairs of pants and which is chosen so that $\bar\si_1^1=\bar \si$.

We note that there exists a finite set of decomposing collections $\mathcal{\bar E}_i=\{\bar \si_i^j\}_{j=1}^{3(\gamma-1)}
$, $i=2,\ldots, N$, of disjoint simple closed curves in $M$, 
 with $\bar{\si}_i^1= \bar{\si}$ for every $i$, such that the following holds true:
 For any  decomposing collection $\tilde{\mathcal{E}}=\{ \tilde{\si}^1, \ldots, \tilde \si^{3(\gamma -1)}\}$ of disjoint simple closed curves in $M$ for which $\tilde \si^1$ and $\bar{\si}$ are homotopic,  there exists an index $i\in \{1,\ldots, N\}$ and a diffeomorphism $\tilde f\colon M\to M$
which maps $\tilde \si^j$ to $ \bar{\si}_i^j$ for every $j=1,\ldots, 3(\gamma-1)$.

This well-known property can be seen as follows: After cutting the surface along the curves $ \bar{\si}$ respectively $\tilde \si^1$,
we obtain two surfaces $\Si_1$ and $\Si_2$ each having one boundary curve. The number of decomposing collections $\mathcal{E}_i$ then corresponds to the number of different ways  that the sets $\Si_1$ and $\Si_2$ can be built from pairs of pants (some containing the boundary curve of $\Si_i$) while keeping track of which boundary curves are glued together.

We now introduce Fenchel-Nielsen coordinates associated with one of these collections, say with $\mathcal{\bar E}_1$, 
and let $(g_\ell)_{\ell\in (0,2\arsinh (1))}$ be a family of hyperbolic metrics on $M$ for which the first Fenchel-Nielsen length coordinate is $\ell_1=\ell\in (0,2\arsinh(1))$ while all other Fenchel-Nielsen coordinates are given by fixed numbers  $\ell_j\equiv c_j$ and $\psi_j\equiv \tilde c_j\in[0,2\pi]$.

We denote the geodesics in $(M,g_\ell)$ that are homotopic to the curves $\bar \si_i^j$ by $\si_i^j(\ell)$ and let 
$\mathcal{E}_i(\ell)=\{\si_i^j(\ell)\}_{j=1}^{3(\gamma-1)}$. We furthermore write for short 
$ \si(\ell)=\si_i^1(\ell)$ for the geodesic that is homotopic to $\bar \si$ (and which thus has length $\ell$) and denote by $\Col(\si(\ell))$ the corresponding collar in $(M,g_\ell)$.

To begin with we claim that on $M\setminus \Col(\si(\ell))$
the injectivity radius $\inj_{g_\ell}$ 
  is bounded away from zero by a constant $\de_0>0$ that depends only on the genus 
  (having fixed the numbers $c_j$):
  To see this we recall that if the injectivity radius in a point $p\in (M\setminus \Col(\si(\ell)),g_\ell)$ is equal to some $\de\in (0,\arsinh(1))$ then this point must be in a collar around a geodesic $\tilde \si\subset (M,g_\ell)$  of length no more than $2\de$. 
  This geodesic either agrees with one of the $\si_1^j(\ell)\in \mathcal{E}_1(\ell)$, $j\neq 1$, in which case $\de\geq \half \min(c_j)$, or it must intersect at least one of the $\si_1^j(\ell)\in\mathcal{E}_1(\ell)$  (as a pair of pants does not contain any simple closed geodesics), in which case its length is bounded below by the width $w_{\ell_j}$ of the corresponding collar, 
which is related to $\ell_j$ by $
\sinh \tfrac{w_{\ell_j}}{2} \sinh \tfrac{\ell_j}{2}=1$. Hence in this 
second case either $\de\geq 2\arsinh(1)$  (if $j=1$) or  $\de\geq \frac{w_{\ell_j}}{2}=\arsinh((\sinh(\frac{c_j}{2}))^{-1})$, and so in this case $\de$ is bounded away from zero in terms of $\max(c_j)$.

We also note that since the collections $ \mathcal{\bar E}_i$ (and the Fenchel-Nielsen coordinates  $\ell_j=c_j$ and $\phi_j=\tilde c_j$ with respect to $ \mathcal{\bar E}_1$) are fixed  we also have an upper bound $\bar L=\bar L(\gamma)$ on the lengths 
of all geodesics 
$\si_i^j(\ell)$ in $(M,g_\ell)$ which are homotopic to one of the simple closed curves in $\bigcup_i \mathcal{\bar E}_i$. As a result, there exist numbers $\eta>0$ and $\bar L$ depending only on the genus so that the usual assumptions \eqref{ass:eta} and \eqref{ass:upperbound} are satisfied for 
 each of the metrics $g_\ell$ and each of the associated collections $\mathcal{E}_i(\ell):=\{\si_i^j(\ell)\}_{j=1}^{3(\gamma-1)}$, $i=1,\ldots, N(\gamma)$, of simple closed geodesics. 
 
These observations allow us to derive Corollary~\ref{thm:C0-est} from Theorem~\ref{thm:FN} as follows. 
Let 
$$f(\ell):=\la_1(M,g_\ell)$$
and note that Theorem~\ref{thm:FN}, applied for the Fenchel-Nielsen coordinates associated with the collection $\mathcal{\bar E}_1$ for which all coordinates except $\ell_1$ are constant along $(g_\ell)_\ell$,  yields that 
$$\babs{\frac{d}{d\ell}\frac{f(\ell)}{\ell}}= 
 \ell^{-1}\babs{\frac{\partial \la_1}{\partial \ell_1}-\frac{\la_1}{\ell}}\leq C\  \abs{\log(\ell)} \text{ for } 0<\ell<\ell_0,$$
where $\ell_0=\ell_0(\de_0, \gamma)$ is as in Remark~\ref{rmk:la-simple} and depends only on the genus. 
 
 Using that the result \eqref{est:burger} of Burger implies in particular
 that 
 $\frac{f(\ell)}{\ell}$ converges to the constant $C_{top}$ defined in \eqref{def:Ctop} as $\ell\to 0$,  we can thus integrate this bound to  
 $$\babs{\frac{f(\ell)}{\ell}-C_{top}}\leq C\ell\babs{\log(\ell)}$$
 holds true, initially for $\ell\in (0,\ell_0)$, and as this estimate trivially holds true for larger values of $\ell$, thus  
indeed for $\ell\in (0,2\arsinh(1))$ as claimed.

Let now $g$ be any hyperbolic metric on $M$ which satisfies the assumptions of the corollary. 
We extend the geodesic $\si$ in $(M,g)$ to a disconnecting set  $\{\hat \si^j\}_{j=1}^{3(\gamma-1)}$, with $\hat \si^1=\si$, of simple closed geodesics
which we recall can be chosen so that $L_g(\hat\si^j)\leq \bar L=\bar L(\gamma)$, compare Lemma~\ref{lemma:appendix-collect}. 
As explained at the beginning of the section, we can now choose $i\in \{1,\ldots,N(\gamma)\}$ so that there exists a 
diffeomorphism $\tilde f\colon M\to M$ which maps $\bar \si_i^j\in\mathcal{\bar E}_i$ to $\hat \si^j $ for every $j=1,\ldots, 3(\gamma-1)$. 
We also note that this diffeomorphism can be chosen so that the twist coordinates (with respect to $\mathcal{\bar E}_i$) of the resulting metric $\tilde f^*g$ are in $[0,2\pi]$.

We then consider the Fenchel-Nielsen coordinates $(\tilde \ell_j, \tilde \psi_j)$ associated to 
$\mathcal{\bar E}_i$ of both $\tilde f^*g$ and the element $g_{\ell=L_g(\si)}$ of the family of metrics considered above. 
We recall that the length coordinates $\ell_j$, $j\geq 2$, of $g_\ell$ are bounded away from zero by the constant 
$2\de_0(\gamma)$ obtained above, while the assumption of the corollary implies that the length coordinates $\ell_j$, $j\geq 2$, of  $\tilde f^*g$ are at least 
 $2\hat \de$. Furthermore, by construction, the length coordinates of both of these metrics are bounded 
from above by a constant $\bar L$ that depends only on the genus. 
We thus also obtain bounds of $2\min(\de_0,\hat\de)\leq \ell_j\leq \bar L$ on the length coordinates $\ell_j$, $j\geq 2$, of the metrics $(g(t))_{t\in[0,1]}$ which interpolate between $\tilde f^*g$ and $g_\ell$ in the sense that their Fenchel-Nielsen coordinates are $\tilde \ell_j(g(t))=t\tilde\ell_j(\tilde f^*g)+(1-t)\tilde \ell_j(g_\ell)$ and likewise for the twist coordinates.

Arguing as in the first part of the proof we can hence obtain a uniform lower bound $\hat \de_0$ on the injectivity radius $\inj_{g(t)}$ on $M\setminus \Col(\si(t))$ 
 in terms of $\max_{j\geq 2}(\tilde\ell_j(f^*g),\tilde\ell_j(g_\ell))\leq \bar L(\gamma)$ and $\min_{j\geq 2}(\tilde\ell_j(\tilde f^*g),\tilde \ell_j(g_\ell)) \geq \min(2\hat \de, 2\de_0)$, 
 where $\si(t)$ is the unique geodesic in $(M,g(t))$ homotopic to $\bar\si$.

This allows us to now complete the proof of the second claim \eqref{claim:la-f} of the corollary as follows: 

Let $\ell_0=\ell_0(\hat{\de}_0, \gamma)>0$ be the constant from Remark~\ref{rmk:la-simple}, where $\hat \de_0=(\hat{\de}_0,\gamma)$ is as obtained above. We note that this constant $\ell_0$ depends only 
on the assumed lower bound $\hat \de$ on $\inj_g $ on $M\setminus \Col(\si)$. 
In case that $\ell\geq \ell_0$ we hence have that \eqref{claim:la-f} is trivially true provided the constant $C=C(\hat\de,\gamma)$ is chosen sufficiently large. 
Conversely, in case $\ell\leq \ell_0$ we know that 
the assumptions of Theorem~\ref{thm:FN} 
hold true for every $g(t)$, $t\in [0,1]$ 
(with $\hat \de$ replaced by $\hat \de_0$). We can thus apply \eqref{claim:dla-length} and \eqref{claim:dla-twist} to bound 
$$\babs{\frac{d\la_1(g(t))}{dt}}\leq C \sum_{j\neq 1}\babs{\frac{\partial\la_1}{\partial\ell_j}}+2\pi
\sum_{j}\babs{\frac{\partial\la_1}{\partial\psi_j}}\leq C\ell^2$$ 
for every $t\in[0,1]$, where the constant $C$ depends only on $\hat \de$ and the genus. Integration over $[0,1]$ hence yields
the claimed bound of
$$\abs{\la_1(M,g)-f(\ell)}=\abs{\la_1(M,\tilde f^*g)-f(\ell)}=\abs{\la_1(g(1))-\la_1(g(0))}\leq C\ell^2 \text{ for some } C=C(\hat \de,\gamma).$$

\newpage
\subsection{Proof of Theorem~\ref{thm:sharp}}\label{subsec:proof_sharp} $ $ \\

We now turn to the proof that the estimates of Theorem~\ref{thm:FN} and Corollary~\ref{thm:C0-est} are sharp as claimed in Theorem~\ref{thm:sharp}. For this we proceed in two steps: First we show that suitable energy bounds, namely upper bounds on the energy on the thick part as obtained in Lemma~\ref{lemma:est-u-thick-main} and a \textit{lower} bound on the energy on a central part of a collar, can be turned into a \textit{lower} bound on the derivative of the eigenvalue with respect to the corresponding length coordinate. This is the purpose of Lemma~\ref{lemma:sharp} that we state and prove more generally for any simple eigenvalue of the Laplacian. The second step of the proof of Theorem~\ref{thm:sharp} is then to prove the necessary lower bounds on the energy of the first eigenfunction on the central part of a collar around a suitably short geodesic $\si^2$ (not homotopic to the given $\bar \si$), and this step is carried out by proving 
Lemma~\ref{lemma:sharp-genus3} for surfaces of genus at least $3$ respectively Lemma~\ref{lemma:sharp2} for surfaces of genus $2$.

\begin{lemma}\label{lemma:sharp}
Let $(M,g)$ be a closed hyperbolic surface, let $\de_0\in(0,\half\arsinh(1)]$ be any given number and let $\si^1,\ldots, \si^{j_0}$ be the simple closed geodesics in $(M,g)$ of length less then $2\de_0$ which we extend to a full collection of simple closed geodesics $\{\si^j\}_{j=1}^{3(\gamma-1)}$ that decompose $(M,g)$ into pairs of pants, chosen as always so that $L(\si^j)\leq \bar L=\bar L(\gamma)$.
\newline
Let $\la$ be any  simple eigenvalue of $-\Delta_g$ with normalised eigenfunction $u$ and denote by $\La$ an upper bound on $\norm{u}_{L^\infty(M,g)}$. 
Then there exists a universal constant $C_4>0$ and a constant $C_5=C_5(\de_0,\gamma,\La)$ so that the following hold true for any $j\in \{1,\ldots,j_0\}$:
The  derivative of $\la$ with respect to  $\ell_j$ is bounded from below by
\beq
\label{claim:lower-sharp}
\frac{\partial \la}{\partial \ell_j}\geq C_4 a_1\la^2-C_5(1+a_2)\ell_j\la^2
\eeq
where $a_1>0 $ is to be determined so that 
the energy of $u$ on the central part of the collar  $\Col(\si^j)$ is at least
\beq
\label{ass:s-energy-sharp}
\int_{-X(\ell_j)/2}^{X(\ell_j)/2}\int_{S^1} \abs{u_s}^2 d\th ds \geq a_1 \ell_j\la^2. 
\eeq
while $a_2>0$ is to be chosen so that
\beq 
\label{est:ass:-energy-est}
\norm{du}^2_{L^2(\de\thick(M,g))}\leq \frac{a_2}{\de}\la^2 \text{ for every } \de\in(0,\de_0].
\eeq

\end{lemma}

In this and the following lemmas we continue to use collar coordinates $(s,\th)$ on collars $\Col(\si)$ as described in Lemma~\ref{lemma:collar}, in particular $X(\ell)$ is given by \eqref{eq:rho-X}.

We first apply the above lemma to prove Theorem~\ref{thm:sharp} for surfaces of genus at least three, in which case we will want to consider surfaces $(M,g)$ which not only contain a disconnecting geodesic $\si^1$ of very small length $\ell_1$ but a further disconnecting geodesic  $\si^2$ whose length is quite small, but contained in a fixed interval. In this case we shall prove
\begin{lemma}\label{lemma:sharp-genus3}

For any genus $\gamma\geq 3$ and any number $ \de_0\in (0,\half\arsinh(1)]$ 
 there exist numbers $\bar\eta\in (0,\half \de_0)$ and $b_1>0$ depending only on $ \de_0$ and the genus of $M$ so that for any $\eta\in(0,\bar \eta]$ there exists $\bar\ell=\bar \ell(\eta,\de_0,\gamma)>0$ so that the following holds true:
 
 Let $(M,g)$ be a hyperbolic surface of genus $\gamma$ 
which contains two disconnecting simple closed geodesics $\si^1$ and $\si^2$ of length 
 $\ell_1\in (0,\bar\ell)$ and $\ell_2\in [2\eta,4\eta]$
 and for which furthermore $\inj_g \geq \de_0$ on $M\setminus ( \Col(\si^1)\cup \Col(\si^2))$.
 Then the energy of the normalised first eigenfunction $u_1$ on the central part of $\Col(\si^2)$ is bounded from below by
$$\int_{-X(\ell_2)/2}^{X(\ell_2)/2}\int_{S^1} \abs{\partial_s u_1}^2 d\th ds\geq b_1\ell_2^{-1} \la_1^2.$$
\end{lemma}

The above lemma hence implies in particular that the assumption \eqref{ass:s-energy-sharp} of Lemma~\ref{lemma:sharp} is satisfied on $\Col(\si^2)$ for a constant $a_1>0$ that depends only on $\de_0$ and the genus, while we note that \eqref{est:ass:-energy-est}
is satisfied for $a_2=C_0(\gamma)$, where $C_0$ is the constant obtained in Lemma~\ref{lemma:est-u-thick-main}. 
After reducing $\bar \eta$ if necessary, we hence obtain from Lemma~\ref{lemma:sharp} that for every $(M,g)$ as in Lemma~\ref{lemma:sharp-genus3} 
\begin{align}\label{eq:la1Thm14}\frac{\partial \la_1}{\partial \ell_2}\geq \half C_4 a_1\la_1^2\geq \tilde c \la_1^2 \text{ for some } \tilde c=\tilde c(\gamma,\de_0)>0.\end{align}
This establishes that the estimate on the derivatives of the first eigenfunction with respect to length coordinates $\ell_j$, $j\neq 1$, obtained in Theorem~\ref{thm:FN} is sharp for surfaces of genus at least $3$, while Lemma \ref{lemma:sharp2} will give the same result for surfaces of genus $2$.

We now explain how to use this bound to show that also the 
$C^0$-estimate 
\eqref{est:which_is_sharp} from Corollary~\ref{thm:C0-est} is sharp as claimed in Theorem~\ref{thm:sharp}. We carry out this proof in detail for surfaces of genus $3$, and remark that the same argument, now using Lemma \ref{lemma:sharp2} to obtain 
\eqref{eq:la1Thm14}, also yields the claim for surfaces of genus $2$.

We let 
$\hat \de:=\bar \eta(\gamma, \half\arsinh(1))\in (0, \frac14 \arsinh(1))$ and $\bar\ell=\bar\ell(\gamma,\half \arsinh(1))\in (0,\arsinh(1))$ be the numbers that we obtain above if we choose $\de_0=\half\arsinh(1)$.
We now construct two families of metrics $(g_\ell)_{\ell\in (0,\bar\ell)}$ and $(g_\ell)_{\ell\in (0,\bar\ell)}$ with the required properties as follows: 
Given any $\ell\in (0,\bar \ell)$ we let $(\hat g_\ell(t))_{t\in [0,1]}$ be the curve of metrics whose Fenchel-Nielsen coordinates are given by 
$$\ell_1(\hat g(t))=\ell, \quad 
\ell_2(\hat g(t))=2\eta+2\eta t, \text{ while } 
\ell_j(\hat g(t))=2\arsinh(1) \text{ for }j\geq 3 \text{ and } \psi_j(\hat g(t))=c_j
$$
for some fixed constant $c_j\in [0,2\pi]$ and $\eta=\bar{\eta}$. We will eventually consider $g_\ell=\hat g_\ell(0)$ and $\tilde g_\ell\define \hat g_\ell(1)$. 

We note that the argument used in the proof of Corollary~\ref{thm:C0-est} yields a lower bound on the 
 injectivity radius of $g_\ell(t)$ on 
 $M\setminus (\Col(\si^1(t))\cup \Col(\si^2(t)))$ of $\min(\arsinh(1),  \frac{w_{2\arsinh(1)}}{2})=\arsinh(1)> \de_0=\half\arsinh(1)$, as each $\ell_j\leq 2 \arsinh(1)$. 

We may thus apply Lemma \ref{lemma:sharp-genus3} and the resulting bound \eqref{eq:la1Thm14} for any of these metrics to conclude that
$$\frac{d\la_1(M,g_\ell(t))}{dt}=2\eta\frac{\partial \la_1}{\partial \ell_2}(g_\ell(t))\geq c\la_1(M,g_\ell(t))^2\text{ for every } t\in[0,1] \text{ and some } c=c(\gamma)>0.$$
As \eqref{est:la_rough} assures that $\la_1(M,g_\ell(t))\geq c\ell$, we can integrate this estimate over $[0,1]$  to conclude that the families $g_\ell=g_\ell(0)$ and $\tilde g_\ell=g_\ell(1)$ indeed have the required property that 
$$\la_1(M,\tilde g_\ell)-\la_1(M,g _\ell)\geq  c \min_{t\in[0,1]} \la_1(M,g_\ell(t))^2\geq \bar c\ell^2,$$ for a constant $\bar c=\bar c(\gamma)>0$.
This concludes the proof of Theorem~\ref{thm:sharp} for surfaces of genus $\gamma\geq 3$. 

We cannot apply Lemma \ref{lemma:sharp-genus3} if the genus of our surface is $\gamma=2$, as $M$ will not contain two disjoint disconnecting simple closed geodesics $\si^{1,2}$. For genus $2$ surfaces we instead consider a symmetric setting in which we can show

\begin{lemma}\label{lemma:sharp2}
There exist numbers $\bar \eta>0$ and $b_2>0$ so that for any $\eta\in(0,\bar\eta]$ there exists a number $\bar\ell=\bar \ell(\eta)>0$ so that the following holds true: Let $(M,g)$ be a hyperbolic surface  of genus $2$ 
which contains a disconnecting geodesic $\si^1$ of length
 $\ell_1\in (0,\bar\ell)$, and for which the other length coordinates agree and satisfy $\ell_2=\ell_3 \in [2\eta,4\eta]$, while all twist coordinates are zero.  Then the energy of the normalised first eigenfunction $u_1$ on the central part of $\Col(\si^{2,3})$ is bounded from below by
 \beq
\label{claim-ass:s-energy-sharp}
\int_{-X(\ell_j)/2}^{X(\ell_j)/2}\int_{S^1} \abs{\partial_s u_1}^2 dsd\th \geq b_2 \ell_j\la_1^2 \text{ for } j=2,3.
\eeq
\end{lemma}
It is important to note that for such symmetric surfaces the energy estimate \eqref{est:ass:-energy-est} also holds true for a constant $a_2$ that is independent of $\eta$, even though the assumptions of Lemma~\ref{lemma:est-u-thick-main} are violated if $\bar \de$ is chosen independently of $\eta$. Indeed, as we shall explain in detail in 
 Remark~\ref{rmk:symm-energy}, the proof of Lemma~\ref{lemma:est-u-thick-main} applies without change also for such symmetric surfaces containing short geodesics that are not disconnecting and yield that \eqref{est:ass:-energy-est} holds for $a_2=C$ for a universal constant $C$.

We may hence again apply Lemma~\ref{lemma:sharp} and the argument given above to conclude that our error rates on the first eigenvalue are sharp also for surfaces of genus $2$ as claimed in Theorem~\ref{thm:sharp}. 

It remains to give the proof of the above three lemmas and we begin with 

\begin{proof}[Proof of Lemma~\ref{lemma:sharp}]
Let $(M,g)$ be as in the lemma and let $\La^j$ the element which is dual to $d\ell_j$ as described in Definition \ref{defn:basis1}. 
As explained in Section~\ref{sec:dual} we know that 
if $\ell_j$ is small then $\La^j$ is concentrated essentially on the corresponding collar and there very close to $\Rea(b_0^j(\La^j))dz^2$ and furthermore recall that the principal parts of $\La^j$ are described by 
\eqref{est:princ-parts-La}.
 We may thus write 
\beqa  \label{est:proof-sharp-hallo}
\frac{\partial \la}{\partial \ell_j}& =
-\tfrac12 \langle \Rea(\Phi(u,g)), \Rea\La^j\rangle 
= \tfrac{\ell_j}{4\pi^2}\langle \Rea(\Phi(u,g)), \Rea(dz^2)\rangle_{L^2(\Col(\si^j))} +R,
\eeqa
for a remainder term that is bounded by 
\beqa \label{est:R-regen}
\abs{R}&\leq C\norm{\Phi}_{L^1(\de_0\thick(M,g))}\norm{\La^j}_
{L^\infty(\de_0\thick(M,g))} + \\
&\qquad + \sum_{k=1}^{j_0} \abs{\Ima(b_0^k(\La^j))}\cdot\abs{\Rea\langle \i dz^2, \Phi\rangle_{L^2(\Col(\si^k))} )}  +\sum_{k=1}^{j_0} \abs{\langle \La^j-b_0^k(\La^j)dz^2, \Phi\rangle_{L^2(\Col(\si^k))}}.
\eeqa

We now recall that the upper bound \eqref{est:ass:-energy-est} on the energy not only implies that $\norm{\Phi }_{L^1(\de_0\thick(M,g))}\leq Ca_2\la^2$, but furthermore 
allows us to apply 
Lemma~\ref{lemma:prod-Hopf}
to bound the above inner products of the Hopf-differential. 
We thus conclude that 
\beqa 
\abs{R}&\leq Ca_2\la^2 \norm{\La^j}_
{L^\infty(\de_0\thick(M,g))}  +C\sqrt{a_2} \la^2\sum_{k=1}^{j_0}\abs{\Ima(b_0^k(\La^j))} \ell_k^{-1}+C
a_2\la^2 \norm{\La^j}_{L^2(\frac{\de_0}{2}\thick(M,g))}\\
&\leq Ca_2 \ell_j\la^2+ C\sqrt{a_2} \ell_j\la^2\sum_{k=1}^{j_0}\ell_j\ell_k^3 \ell_k^{-1}\leq C(a_2+1)\ell_j\la^2
\eeqa
where we use the bounds \eqref{est:princ-parts-La} and \eqref{est:La-for-sharp}  on $\La^j$ from Section~\ref{sec:dual} in the penultimate step.

To bound the main term in \eqref{est:proof-sharp-hallo}, we can now use the angular energy estimate \eqref{est:weighted-ang-en-2}
as well as  \eqref{ass:s-energy-sharp}. Combined with the above bound on $R$ and
the fact that $\rho(\frac{X(\ell)}{2})\leq C\ell$ 
this yields 
\beqa
\frac{\partial \la}{\partial \ell_j}
&= \frac{\ell_j}{2\pi^2}\int_{-X(\ell_2)}^{X(\ell_2)}\int_{S^1}(\abs{u_s}^2-\abs{u_\th}^2)\rho^{-2} dsd\th +R \\
&\geq \frac{\ell_j}{2\pi^2}\rho^{-2}(\tfrac{X(\ell_2)}{2})\int_{-X(\ell_2)/2}^{X(\ell_2)/2}\int_{S^1}\abs{u_s}^2 d\th ds -C\ell_j \int_{-X(\ell_2)}^{X(\ell_2)}\int_{S^1} \abs{u_\th}^2 \rho^{-2}
 dsd\th +R\\
 &\geq \frac{\ell_j^2}{2\pi^2} a_1 \la^2 \rho^{-2}(\tfrac{X(\ell_2)}{2})-
 C\ell_j\norm{du}_{L^2(\de_3\thick (\Col(\si^j)))}^2-C\ell_j\la^2\norm{u}_{L^\infty(M,g)}^2+R\\
 &\geq C_4a_1\la^2-C_5 (a_2+1) \ell_j \la^2
\eeqa
for a universal constant $C_4>0$ and a constant $C_5$ that depends only on the genus, $\de_0$ and an upper bound on $\norm{u}_{L^\infty}$, as claimed in the lemma. 
\end{proof}

We now turn to the 
proofs of Lemmas \ref{lemma:sharp-genus3} and \ref{lemma:sharp2}. To this end we first show that in the setting of both of these lemmas 
there exist numbers $\bar \eta>0$ and $c_1>0$ that depend on the genus (and in the setting of  Lemma~\ref{lemma:sharp-genus3} additionally on $\de_0$ and there chosen in particular so that $\bar \eta<\de_0$) 
so that the following holds true:

For any 
$\eta\in (0,\bar\eta]$
there exists $\bar\ell=\bar\ell(\eta,\de_0,\gamma)>0$ so that if 
$(M,g)$ is as in Lemma~\ref{lemma:sharp-genus3} respectively \ref{lemma:sharp2}, then the normalised first eigenfunction $u_1$ is bounded away from zero pointwise on $\eta\thick(M,g)$. Namely, after replacing $u_1$ by $-u_1$ if necessary, we have that
\beq \label{est:pointwise-lower-bound}
u_1\geq c_1>0 \text{ on } M_1^\eta \text{ while } u_1\leq -c_1 \text{ on }  M_2^\eta
\eeq
where $M_{1,2}^\eta$ are the two connected components of $\eta\thick(M,g)=M\setminus \eta\thin(\Col(\si^1))$, with the convention that $\si^2\subset M_1^\eta$. 

We can prove this as follows: Let $(M,g)$ be a surface satisfying the assumptions of  Lemma~\ref{lemma:sharp-genus3} or \ref{lemma:sharp2} for some $\eta\in(0,\bar \eta)$, where $\bar \eta$ is determined below (in the setting of Lemma~\ref{lemma:sharp-genus3} chosen with $\bar \eta\leq \de_0$) and 
let $u_1$ be the normalised first eigenfunction of $-\Delta_g$.  We first note that 
\beq
\label{est:H2}
\norm{u_1}_{H^2(M,g)}^2\leq C\cdot(\norm{\Delta_g u_1}_{L^2(M,g)}^2+\norm{d u_1}_{L^2(M,g)}^2+\norm{u_1}_{L^2(M,g)}^2)\leq C\eeq 
is bounded by a constant that depends at most on the genus, as $\la_1$ is bounded from above uniformly.

%
We furthermore note that if we 
apply the Sobolev embedding theorem on  $M_{1,2}^\eta$ rather than on all of $M$, then the resulting estimate is valid with a constant that depends on $\eta$ and the genus, but not on $\ell_1$. Hence we obtain that  
\beq 
\label{est:osc} \osc_{M_{1,2}^\eta} u_1\leq C 
\norm{d u_1}_{L^3(M_{1,2}^\eta)}\leq C \norm{d u_1}_{L^4(M_{1,2}^\eta)}^{2/3}\cdot\norm{d u_1}_{L^2(M)}^{1/3} \leq C \cdot \la_1^{1/3}\eeq for a constant $C$ that is allowed to depend on 
$\eta$, $\de_0$ and the genus, but not on $\ell_1$. 
Given any $\eps>0$ and any $\eta$ we can hence choose 
 $\bar \ell$ sufficiently small (depending in particular on $\eta$ and $\eps$) so that the above estimate ensures that 
$\osc_{M_{1,2}^\eta} u_1\leq \eps$ provided $\ell_1\leq \bar \ell$ as assumed in the lemmas.  

In particular, for $\bar u_{1,2}:=\fint_{M^{\eta}_{1,2}}u_1 dv_g$
we have that $\abs{u_1-\bar u_{1,2}}\leq \eps$ on $M_{1,2}^\eta$ and hence
\beqs 
(\abs{\bar u_1}+\eps)^2\Area(M_1^\eta)+(\abs{\bar u_2}+\eps)^2\Area(M_2^\eta)\geq \norm{u_1}_{L^2(M,g)}^2-\norm{u_1}_{L^\infty}^2\Area(\eta\thin(M,g))\geq 1-C\bar\eta
,\eeqs
where $C$ depends only on an upper bound on $\norm{u_1}_{L^\infty}$ and is in particular independent of $\bar\eta$. At the same time the fact that $\int_M u_1=0$ implies $$\bar u_1\Area(M_1^\eta)+\bar u_2\Area(M_1^\eta)\leq \norm{u_1}_{L^\infty}\Area(\eta\thin(M,g))\leq C\bar \eta.$$
If $\bar \eta>0$ is initially chosen small enough, we thus find 
that to every $\eta\in (0,\bar \eta)$ there exist numbers $\eps>0$ and $\bar\ell>0$ so that the above argument ensures that 
$\bar u_{1,2}$ have the opposite sign and satisfy  $\abs{\bar u_{1,2}}\geq 2c_1>c_1+\eps$ for some $c_1=c_1(\gamma,\de_0)>0$. As $\osc_{M_{1,2}^\eta} u_1\leq \eps$ we thus obtain the desired pointwise bound \eqref{est:pointwise-lower-bound}.
 Based on this bound on $u_1\vert_{M_{1,2}^\eta}$ we can now complete the proofs of Lemmas \ref{lemma:sharp-genus3} and \ref{lemma:sharp2}  as follows. 

\begin{proof}[Proof of Lemma~\ref{lemma:sharp-genus3}]
Let $u_1$ be as in the lemma, let $\bar \eta$ be as above and let 
$\mu\geq 0$ be so that 
\beq\label{eq:def-mu}
\int_{-X(\ell_2)/2}^{X(\ell_2)/2}\int_{S^1} \abs{\partial_s u_1}^2= \mu\ell_2^{-1} \la_1^2.\eeq
Our goal is to derive a lower bound on $\mu$ that depends only on the genus $\gamma\geq 3$ and $\de_0$.

To this end, we 
compare the Rayleigh-quotient of $u_1$ with the 
one of
 $u_1^\epsilon=u_1+\eps\cdot (v-\fint_M v)$, where $v$ is chosen to be 
linear in the collar coordinate $s$ on the set $\Col_{1/2}(\si^2)=\{ (s,\th): \abs{s}\leq \half X(\ell_2)\}\subset \Col(\si^2)$ with 
$v\equiv 0$ on the connected component $F^-$ of  $M\setminus \Col_{1/2}(\si^2)$
which contains $\si^1$ and $v\equiv 1$ on the other connected component $F^+$ of $M\setminus \Col_{1/2}(\si^2)$. 

We note that the support of $v$ is contained in $M_1^\eta$, where the eigenfunction $u_1\geq  c_1>0$, so as $\int_M u_1=0$ we have that at $\eps=0$
\beqa
\frac{d}{d\eps}\norm{u_1^\epsilon}_{L^2(M,g)}^2&=2\int_M u_1 v dv_g-2\int_M u_1 dv_g\cdot \fint_M v dv_g= 2\int_M u_1 v dv_g \geq c_1 \Area(F^+) \geq c_2>0\eeqa
for a constant $c_2>0$ that depends only on the genus of $M$ and the fixed number $\de_0$.  

Conversely, the change of the energy of $u_1^\epsilon$ at $\eps=0$ can be no more than 
\beqa 
\frac{d}{d\eps} \norm{d u_1^\epsilon}_{L^2(M,g))}^2&=2\int_{-X(\ell_2)/2}^{X(\ell_2)/2}\int_{S^1} \partial_s v \partial_s u_1 ds d\th\\
& \leq C X(\ell_2)^{-1/2} \cdot \bigg(\int_{-X(\ell_2)/2}^{X(\ell_2)/2}\int_{S^1}\abs{\partial_s u_1}^2\bigg)^{1/2}
\leq C \mu^{1/2} \la_1
\eeqa 
for the number $\mu$ from \eqref{eq:def-mu} that we want to bound from below and a universal constant $C$. 

Since $u$ is a critical point of the Rayleigh-quotient we thus know that at $\eps=0$
$$0=\frac{d}{d\eps} \norm{d u_1^\epsilon}_{L^2(M,g))}^2-\la_1 \frac{d}{d\eps}\norm{u_1^\epsilon}_{L^2(M,g)}^2\leq C \mu^{1/2} \la_1-c_2\la_1 .$$
This gives the uniform lower bound of $\mu\geq \big(\frac{c_2}{C}\big)^2=:b_1$ claimed in the lemma. 
\end{proof}

\begin{proof}[Proof of Lemma~\ref{lemma:sharp2}]
We argue very similarly as in the previous proof, with the main difference being that we no longer have that $\si^2$ is disconnecting. 
We hence choose $v$ to be $v\equiv 1$ on the subset of points in $\Col(\si^2)$ whose collar coordinates satisfy $\abs{s}\leq \frac14 X(\ell)$, $v\equiv 0$ on $M\setminus \Col_{1/2}(\si^2)$ and $v$ linear on the cylinders that connect these two parts of the collar, i.e. for 
$\frac14 X(\ell_2)\leq \abs{s}\leq \frac12 X(\ell_2)$. 
The area of the set on which $v$ is identically one is 
then bounded from below by $ c\ell_2$ for some universal $c>0$, so that we now have that for $u_1^\epsilon$ as in the previous proof 
$$\frac{d}{d\eps}\norm{u_1^\epsilon}_{L^2(M,g)}^2 \geq c\ell_2>0.$$

Letting this time $\mu\geq 0$ be so that 
$\int_{-X(\ell_j)/2}^{X(\ell_j)/2}\int_{S^1} \abs{\partial_s u_1}^2 dsd\th = \mu \ell_2\la_1^2 $, we can then bound
$$
\frac{d}{d\eps} \norm{d u_1^\epsilon}_{L^2(M,g)}^2\leq C \mu^{1/2} \la_1 \ell_2
$$ so the claimed uniform bound on $\mu$ again follows from the fact that $u_1$ is a critical point of the Rayleigh-quotient and hence $C \mu^{1/2} \la_1 \ell_2\geq c\ell_2\la_1$.  
\end{proof}

\section{Proof of the properties of the first eigenfunction}
\label{sect:proof_ef}
In this section we prove the energy estimates on the first eigenfunction that we stated in Section~\ref{subsec:energy} and used in the previous sections to prove our main results about the first eigenvalue. Some of these proofs are based on a specific form of the Poincar\'e estimate that we state in Lemma~\ref{lemma:Poincare-functions} below, and for which we include a proof in Section~\ref{subsection:last}, where we also provide a proof of Remark~\ref{rmk:L-infty}.

\subsection{Proof of the energy estimates of the first eigenfunction } $ $\\
To begin with, we give a brief sketch of the proof of the angular energy estimates stated in Lemma~\ref{lemma:ang-energy-mainpart}, which follow from very well-known arguments that have been used in particular in many works in the analysis of bubbling for harmonic maps; a very similar proof can be found e.g. in \cite[Lemma 2.4]{HRT}
\begin{proof} [Sketch of proof of Lemma~\ref{lemma:ang-energy-mainpart}]
Let $(M,g)$ be a closed hyperbolic surface and let $\Col(\si)\subset (M,g)$ be a collar around a geodesic $\si$ of length $\ell\in (0,2\arsinh(1))$ on which we introduce collar coordinates $(s,\th)$. Let $u$ be any normalised eigenfunction of $-\Delta_g $ to an eigenvalue $\la$ and let 
 $\vartheta(s)\define \int_{\{s\}\times S^1}\abs{u_\th}^2 d\th$. 
 A short calculation shows that 
$$
\vartheta''(s)-\vartheta(s)
\geq -\int_{\{s\}\times S^1} \rho^4\abs{\Delta_g u}^2 d\theta=-
\la^2\int_{\{s\}\times S^1} \rho^4u^2 d\th.
$$
Hence, comparison with the solution of the corresponding ODE implies that for
any $\La\geq 0$ and any $s$ with 
 $\abs{s}\leq X(\ell)-(\La+1)$ we have
\beqa \label{est:theta-with-Lambda}
\vartheta(s) &\leq  2\cdot e^{-\La}\norm{d u}_{L^2(\Cyl_{\abs{s}+\La+1}\setminus \Cyl_{\abs{s}+\La})}^2 +\half \la^2 \int_{\Cyl_{\abs{s}+\La+1}} \rho^4e^{-\abs{s-q}} u^2 d\th dq,
\eeqa
where we write for short $\Cyl_\La=[-\La,\La]\times S^1$. In particular, we obtain 
that, for some universal constants $\de_3,C>0$, we can bound 
\beqa \label{est:ang-en-1}
\vartheta(s)\leq  C  e^{-(X(\ell)-\abs{s})}
\norm{du}_{L^2(\de_3\thick(\Col(\si)))}^2+C\la^2\int_{-X(\ell)}^{X(\ell)}\int_{S^1} \rho^4 e^{-\abs{s-q}}u^2 d\th dq
\eeqa
for every $s\in [-X(\ell)+1,X(\ell)-1]$. A short calculation, integrating the above estimate with the desired weight of $\rho^{-\alpha}$, $\alpha=2,4$,  
using Fubini's theorem and the fact that $\abs{\partial_s\rho^{-1}}\leq 1$, then yields the desired bounds \eqref{est:weighted-ang-en-4} and  \eqref{est:weighted-ang-en-2} on $\int\rho^{-\alpha} \vartheta$, first for the integral over $\abs{s}\leq X(\ell)-1$, but as $\rho^{-1}$ is bounded uniformly near the ends of the collars, hence also for the integral over the whole collar. 
\end{proof}

We now turn to the proof of the estimates on the energy of the first eigenfunction on the thick part of the surface that we stated in Lemma~\ref{lemma:est-u-thick-main}. For this we shall use the following version of the Poincar\'e inequality for functions, a proof of which is included in the next section for the sake of completeness.

\begin{lemma}
\label{lemma:Poincare-functions}
Let $(M,g)$ be a closed oriented hyperbolic surface and suppose that $\de\in (0,\half\arsinh(1))$ is so that $\inj(M,g)\leq \de$. 
Let $M_1^\de$ be the closure of a connected component of $\{p: \inj_g(p)> \de\}$ and denote its boundary components by  
$\partial M_1^\de= \gamma^1\sqcup \ldots \sqcup \gamma^{k_1}$. 
Suppose that  $v\in H^1(M,g)$ 
is so that 
\beqs 
\label{ass:poincare-fn}
v\equiv 0 \text{ on at least one } \gamma^j.\eeqs
Then we may estimate
\beqs \label{est:Poincare-fn}
\norm{v}_{L^2(M_1^\de,g)}^2+ \de^{-1} \norm{v}_{L^2(\partial M_1^\de,g)}^2
\leq \frac{C}{\de}\norm{d v}_{L^2(M_1^\de,g)}^2
\eeqs
for a constant $C$ that depends only on the genus of $M$.
\end{lemma}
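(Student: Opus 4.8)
The plan is to combine the thick--thin decomposition of $(M,g)$ with a propagation argument that carries the vanishing hypothesis across all of $M_1^\delta$. First I would split $M_1^\delta$ into finitely many pieces of two kinds: the \emph{blobs} $B_1,\dots,B_p$, namely the connected components of $M_1^\delta\cap\big(M\setminus\bigcup_{\si}\Col(\si)\big)$, and the \emph{tubes} $A_1,\dots,A_q$, namely the maximal sub-cylinders of the collars $\Col(\si^i)$ on which $\inj_g\geq\delta$. Since $\delta<\tfrac12\arsinh(1)$ each $B_k$ lies in the $\arsinh(1)$-thick part of $M$, and since that part has diameter, area and number of components bounded in terms of $\gamma$ alone, while the edge curves of collars have length comparable to a universal constant, the $B_k$ range over a compact family of bounded Lipschitz domains. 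For the tubes I would use the collar formula $\rho(s)=\tfrac{\ell_i}{2\pi}\sec\!\big(\tfrac{\ell_i s}{2\pi}\big)$ together with $\inj_g\asymp\rho$ on collars (see \eqref{est:inj-by-rho}) to record the following uniform facts: in the collar coordinates $(s,\th)$ each tube $A_i=(\alpha_i,\beta_i)\times S^1$ obeys $\int_{\alpha_i}^{\beta_i}\rho^2\,ds\leq C$, $\ \rho\leq C$ and $\ \beta_i-\alpha_i\leq C\delta^{-1}$ with $C$ universal; $\Col(\si^i)\cap M_1^\delta$ is either the whole collar (a \emph{full tube}) or its two ends (\emph{half tubes}); each boundary circle of a tube that abuts a blob has $\rho$ comparable to a universal constant there, while each boundary circle of a tube that does not abut a blob is exactly one of the $\gamma^j$ and has $\rho\asymp\delta$ there. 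Viewing blobs as vertices and tubes as (half-)edges, $M_1^\delta$ then becomes a connected finite graph whose dangling ends are precisely $\gamma^1,\dots,\gamma^{k_1}$, and the hypothesis is that $v\equiv0$ on the dangling end $\gamma^{j_0}$ of some half tube $A_{i_0}$.

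The two analytic ingredients are a one-dimensional estimate on tubes and the ordinary Poincar\'e and trace estimates on blobs. On a tube $A_i$, writing $\vartheta(s):=\int_{S^1}v(s,\cdot)^2\,d\th$ and $\varphi:=\sqrt{\vartheta}$, Cauchy--Schwarz in $\th$ gives $|\varphi(s)-\varphi(s')|\leq|s-s'|^{1/2}\norm{dv}_{L^2(A_i,g)}\leq C\delta^{-1/2}\norm{dv}_{L^2(A_i,g)}$ for all $s,s'\in[\alpha_i,\beta_i]$, hence $\sup_s\vartheta(s)\leq2\vartheta(s')+C\delta^{-1}\norm{dv}_{L^2(A_i,g)}^2$ for any fixed $s'$, and, using $\int\rho^2\,ds\leq C$, also $\norm{v}_{L^2(A_i,g)}^2\leq C\big(\vartheta(s')+\delta^{-1}\norm{dv}_{L^2(A_i,g)}^2\big)$; moreover for a boundary circle $\{s=s'\}$ one has $\norm{v}_{L^2(\{s=s'\},g)}^2=\rho(s')\,\vartheta(s')$, which is comparable to $\vartheta(s')$ at a blob-end and to $\delta\,\vartheta(s')$ at a $\gamma^j$-end. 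On a blob $B_k$, with $\bar v_k$ the mean of $v$ over $B_k$, the standard Poincar\'e and trace inequalities on the compact family of domains yield $\norm{v-\bar v_k}_{L^2(B_k,g)}^2+\norm{v-\bar v_k}_{L^2(\partial B_k,g)}^2\leq C(\gamma)\norm{dv}_{L^2(B_k,g)}^2$; since every boundary circle of $B_k$ has length bounded above and below by universal constants, it follows that $|\bar v_k|$, and hence also $\norm{v}_{L^2(B_k,g)}$ and $\norm{v}_{L^2(\partial B_k,g)}$, are controlled by $\norm{v}_{L^2(\gamma,g)}+\norm{dv}_{L^2(B_k,g)}$ for any single boundary circle $\gamma\subset\partial B_k$.

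I would then run a breadth-first traversal of the graph starting from the tube $A_{i_0}$. From $v|_{\gamma^{j_0}}=0$, that is $\vartheta(\alpha_{i_0})=0$, the tube estimate on $A_{i_0}$ gives $\vartheta\leq C\delta^{-1}\norm{dv}_{L^2(A_{i_0},g)}^2$ throughout $A_{i_0}$, in particular at the circle where $A_{i_0}$ abuts its blob; feeding this into the blob estimate for that blob controls its interior and boundary $L^2$-norms by $C\delta^{-1}\norm{dv}_{L^2(A_{i_0}\cup B,g)}^2$; this controls $\vartheta$ at the blob-ends of every further tube leaving that blob (where $\rho$ is comparable to a universal constant), and the tube estimate then transports the bound through each such tube, and so on. Each blob is processed exactly once; on the way each tube $A_i$ contributes $\norm{v}_{L^2(A_i,g)}^2\leq C\delta^{-1}\norm{dv}_{L^2(P_i,g)}^2$, where $P_i$ is the union of the pieces on the traversal path up to $A_i$, and at any end $\gamma^j$ that $A_i$ carries one gets $\delta^{-1}\norm{v}_{L^2(\gamma^j,g)}^2$ comparable to $\vartheta$ at that end, hence $\leq C\delta^{-1}\norm{dv}_{L^2(P_i,g)}^2$. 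As the graph has at most $C(\gamma)$ vertices and edges, every piece of $M_1^\delta$ enters the accumulated gradient term with multiplicity at most $C(\gamma)$, so summing $\norm{v}_{L^2(B_k,g)}^2$, $\norm{v}_{L^2(A_i,g)}^2$ and $\delta^{-1}\norm{v}_{L^2(\gamma^j,g)}^2$ over all pieces yields $\norm{v}_{L^2(M_1^\delta,g)}^2+\delta^{-1}\norm{v}_{L^2(\partial M_1^\delta,g)}^2\leq C(\gamma)\,\delta^{-1}\norm{dv}_{L^2(M_1^\delta,g)}^2$.

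The main obstacle will be the geometric bookkeeping behind the decomposition --- in particular verifying the uniform tube bounds, where the estimate $\beta_i-\alpha_i\leq C\delta^{-1}$, though elementary, relies on the behaviour of $\arccos$ near $0$ rather than near $1$ and so is cleanest if one treats the ranges $\ell_i\lesssim\delta$ and $\ell_i\gtrsim\delta$ separately, and confirming that the blobs genuinely form a compact family of domains so that their Poincar\'e and trace constants depend only on $\gamma$. Once these structural facts are in place, the propagation argument is routine and is, in effect, one-dimensional.
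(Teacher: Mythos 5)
Your proof is correct and follows essentially the same strategy as the paper's: decompose $M_1^\de$ into cylindrical pieces inside collars and uniformly controlled thick pieces, prove a one-dimensional estimate transporting the trace $\vartheta(s)=\int_{S^1}v^2\,d\th$ along cylinders at cost $C\de^{-1}\norm{dv}_{L^2}^2$, invoke uniform Poincar\'e and trace inequalities on the thick pieces, and propagate the vanishing condition through the (genus-bounded) adjacency graph. The only differences are cosmetic — you cut at the collar boundaries rather than at the $\de_0$-thick level set, and you run the cylinder estimate via $\sqrt{\vartheta}$ and Cauchy--Schwarz instead of integrating $v(s_1,\th)=v(s_2,\th)+\int v_s$ directly — and the uniformity of the Poincar\'e constant on the thick pieces, which you flag, is the same standard compactness fact the paper disposes of by a diameter bound and a citation.
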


\begin{proof}[Proof of Lemma~\ref{lemma:est-u-thick-main}]
Let $(M,g)$ be as in the lemma and let $\La_0=\La_0(\gamma)\geq 0$ be a fixed number that is chosen below. We recall that points in
the $\de\thin$ part of a collar $\Col(\si)$, $\ell=L_g(\si)< 2\de\leq 2\arsinh(1)$, have collar coordinates with  $\abs{s}< X_\de(\ell)$,
for $X_\de(\ell)$ given by \eqref{eq:Xde}. Hence we can and will choose 
 $\de_2=\de_2(\gamma)\in (0,\arsinh(1))$ sufficiently small so that 
$X(\ell)-X_\de(\ell)\geq \La_0+2$ for every $\ell\leq 2\de_2$ and $\de\in (\half \ell,\de_2]$. 

We also note that if the assumptions of the lemma are satisfied for 
$\bar \de$ then they are also satisfied if we replace $\bar \de$ by 
$\min(\bar\de,\de_2)$, and remark that proving \eqref{est:u-de-thick} for $\de<\min(\bar\de,\de_2)$ also gives the desired bounds for $\de\in [\de_2,\bar\de)$ since  $\de_2$ depends only on the genus. From here on we thus assume without loss of generality that $\bar \de\leq \de_2$. 

In addition, it suffices to consider surfaces for which $\inj(M,g) < \bar\de$, 
as otherwise \eqref{est:u-de-thick} is trivially satisfied since $\la_1$ would be bounded away from zero in terms of $\bar\de$ and the genus. It furthermore suffices to consider  numbers 
 $\de\geq \inj(M,g)$, as the estimate for smaller values of $\de$ follows from the case that $\de=\inj(M,g)$.

So let $\de\in [\inj(M,g),\bar\de]$ where $\bar\de\leq \de_2<\arsinh(1)$ is as in the lemma. We first note that the assumptions of the lemma guarantee 
 that the set of simple closed geodesics $\{\si^{i}\}_{i=1}^{k}$ of length no more than $2 \de$ is non-empty and contains only geodesics $\si^i$ for which  $M\setminus \si^i$ is disconnected. Since the length of these geodesics is less than $2\arsinh(1)$, the $\si^j$ are furthermore pairwise disjoint so  $M\setminus \bigcup_{i=1}^k\si^i $ has
$k+1$ connected components which we denote by $M_i$. We furthermore note that by construction 
$M_i^\de:=\de\thick(M,g)\cap \ov{M_i}$ are the closures of the connected components of 
$\{p:\inj_g(p)> \de\}$ and remark that $\bigcup M_i^\de=\de\thick(M,g)$ as well as that 
$\de\thin(M,g)\subset \bigcup_{i=1}^k \Col(\si^i)$.

The basic idea of the proof is the following: If too much energy was concentrated on one of the $M_i^\de$, then a function which is constant on (most of) 
$M_i^\de$, but agrees with $u$ up to a constant on each of the connected components of $M\setminus M_i^\de$, would have a smaller Rayleigh-quotient than $u_1$, contradicting the fact that $u_1$ is a first eigenfunction. 
To make this idea precise, we associate to each
$M_i^\de$ the numbers $\mu_i=\mu_i(\de)$ which are determined by 
\beq 
\label{def:mu}
\norm{d u_1}_{L^2(M_i^\de,g)}^2=\mu_i\frac{\la_1^2}{\de}, \qquad i=1,\ldots,k+1. \eeq
After reordering we may assume without loss of generality that $\mu_i\leq \mu_1$, $i=2,\ldots,k+1$, so to establish the claim of the lemma we need to show that 
$\mu_1\leq C$ for a constant $C$ that depends only on the genus.

Let $\gamma^1,\ldots,\gamma^{k_1}$ be the boundary curves of $M_1^\de$. As the injectivity radius is equal to $\de$ on $\partial M_1^\de$, each $\gamma^i$ must lie in a collar around a geodesic $\si^{j_i}$ of the collection of $\{\si^j\}$ obtained above. 
The assumption that $M\setminus \si^j$ is disconnected 
for each $j$ ensures that $j_i\neq j_k$ for $i\neq k$ as well as that each of the connected components $\widetilde M^i$ of $M\setminus M_1^\de$ is adjacent to precisely one $\gamma^i$. 
We may thus assume without loss of generality that $\gamma^i$ is contained in the closure of $\Col(\si^i)\cap \widetilde M^i$.
In collar coordinates (chosen with suitable orientation) $\gamma^i$ then corresponds to the curve $\{X_\de(\ell_i)\}\times S^1$, $\ell_i=L_g(\si^i)$, while $M_1^\de\cap \Col(\si^i)$ corresponds to the cylinder $[X_\de(\ell_i),X(\ell_i))\times S^1$. We recall that the choice of $\de_2$ made above guarantees that $X(\ell_i)-X_\de(\ell_i)\geq \La_0+2\geq 2$.

We will later consider the Rayleigh-quotient of  $v=\tilde u-\fint_M \tilde u$ where $\tilde u\in H^1(M,g)\cap C^0(M,g)$ is obtained
as modification of $u_1$ as follows:
We let 
$c_i\define\fint_{\gamma^i} u_1 dS_g=(2\pi)^{-1}\int_{\{X_\de(\ell_i)\}\times S^1} u_1 d\th$
and define $\tilde u$ so that $\tilde u-u_1$ is constant on each  connected component $\widetilde M^j$ of $M\setminus M_1^\de$
while  $\tilde u\equiv c_1$ 
 on all of $M_1^\de$ except for the cylinders 
 $K_j=[X_\de(\ell_j),X_\de(\ell_j)+1]\times S^1 \subset \Col(\si^j)$ on which we interpolate. To be more precise, we set 
\beq\label{eq:def-tildeu}
\tilde u(p)= \begin{cases}
c_1 & \text{ for } p\in M_1^\de\setminus \bigcup_{j=1}^{k_1}K_j
\\
c_1+(X_\de(\ell_j)+1-s)\cdot\big[u_1(X_{\de}(\ell_j),\th)-c_j\big]
 & \text{ for }  p=(s,\th) \in  K_j\subset \Col(\si^j)\\
u_1+(c_1-c_j)  & \text{ for } p\in \widetilde M^j.
\end{cases}
\eeq
We first note that by \eqref{def:mu}
\beq \label{est:energy_diff_tildeu1}
\norm{du_1}_{L^2(M,g)}^2-\norm{ d\tilde u}_{L^2(M,g)}^2=\norm{du_1}_{L^2(M_1^\de,g)}^2-\sum_j\norm{d\tilde u}_{L^2(K_j,g)}^2=\mu_1\frac{\la_1^2}{\de}-\sum_j\norm{d\tilde u}_{L^2(K_j,g)}^2.
\eeq
The choice of $\de_2\geq \de$ guarantees that 
$\abs{s}\leq X(\ell_j)-(\La_0+1)$ on 
the cylinders $K_j$ on which we interpolate, so 
we may apply the angular energy estimate \eqref{est:theta-with-Lambda} with $\La=\La_0\geq 0$ to obtain 
\beqas
\norm{d \tilde u}_{L^2(K_j,g)}^2&\leq\int_{\{X_\de(\ell_j)\}\times S^1}\abs{u_1-c_j}^ 2+\abs{\partial_\th u_1}^2  d\th \leq 2\vth(X_\de(\ell_j))\\
&\leq 
4e^{-\La_0}\norm{d u_1}_{L^2(\Cyl_{X_\de(\ell_j)+\La_0+1}\setminus \Cyl_{X_\de(\ell_j)+\La_0})}^2 + \la_1^2 
\int_{\Cyl_{X_\de(\ell_j)+\La_0+1}}
 \rho^4e^{-\abs{ X_\de(\ell_j)-q}} u_1^2 d\th dq \\
&\leq 4 e^{-\La_0} \norm{d u_1}_{L^2(\Col(\si^j)\cap \de\thick(M,g))}^2+C\la_1^2\rho^4(X_\de(\ell_j)+\La_0+1)\norm{u_1}_{L^\infty(M,g)}^2.
\label{est:energy-tilde-u-3}
\eeqas
We finally choose 
$\La_0$ so that $k+1\leq 3(\gamma-1)+1 \leq \frac{1}{8}e^{\La_0}$
where we recall that 
$k+1$ is the number of connected components of $\{p:\inj_g(p)> \de\}$. We also recall from  \eqref{est:inj-by-rho} that 
$\rho(X_\de(\ell_j)+\La_0+1)\leq e^{\La_0+1} \rho(X_\de(\ell_j))\leq  C \de$
and from Remark~\ref{rmk:L-infty}
that $u_1$ is uniformly bounded. We hence obtain 
\beqa \label{est:energy-tilde-u-4}
\sum_{i=1}^{k+1} \norm{d \tilde u}_{L^2( K_i,g)}^2 &\leq 4 e^{-\La_0} \norm{d u_1}_{L^2(\de\thick(M,g))}^2+C\la_1^2\de^4
= 4 e^{-\La_0}\sum_{i=1}^{k+1} \mu_i \cdot \frac{\la_1^2}{\de} +C\la_1^2\de^4
\\&\leq 4(k+1)e^{-\La_0} \mu_1\frac{\la_1^2}{\de}+C\la_1^2\de^4
\leq \half \mu_1\frac{\la_1^2}{\de}+C\la_1^2\de^4,
\eeqa
where the numbers $\mu_i$ are as in \eqref{def:mu} and where we used $\mu_i\leq \mu_1$ in the penultimate step.

On the one hand, we can combine this estimate with 
 \eqref{est:energy_diff_tildeu1} to obtain that 
\beqa
\label{est:energy-tilde-u}
\norm{d \tilde u}_{L^2(M,g)}^2\leq  &
\norm{d u_1}_{L^2(M,g)}^2 -\half \mu_1\frac{\la_1^2}{\de}
+C\la_1^2\de^4
=\la_1\cdot \big[1-(\frac{\mu_1 \la_1}{2\de}-C\la_1 \de^4)\big]
\eeqa
for a constant $C$ that depends only on the genus. 

On the other hand, \eqref{est:energy-tilde-u-4} 
 also allows us to estimate the $L^2$-norm of $u_1-\tilde u$: 
Since  $u_1-\tilde u\equiv 0$ on $\gamma^1$ and 
\beqs 
\norm{d(u_1-\tilde u)}_{L^2(M_1^\de,g)}^2\leq 2\norm{du_1}_{L^2(M_1^\de,g)}^2+2\norm{d\tilde u}_{L^2(M_1^\de,g)}^2\leq C\mu_1\frac{\la_1^2}{\de}
+C\la_1^2\de^4
\eeqs
we may apply the Poincar\'e estimate stated in  Lemma~\ref{lemma:Poincare-functions} to bound
\beqa \label{est:L2-diff-u-tilde}
\norm{u_1-\tilde u}_{L^2(M_1^\de,g)}^2&\leq C\mu_1\la_1^2\de^{-2}+C\de^3\la_1^2.
\eeqa
The same argument, now using the trace-estimate from  Lemma~\ref{lemma:Poincare-functions}, implies that also
\beqa\label{est:L2-diff-u-tilde_2}
\norm{u_1-\tilde u}_{L^2(M\setminus M_1^\de,g)}^2&=\sum_i \abs{c_i-c_1}^2 \Area(\widetilde M^i,g) \leq C \sum_i \int_{\{X_\de(\ell_i)\}\times S^1} \abs{u_1-\tilde u}^2 d\theta\\
&=C\delta^{-1} \norm{ u_1-\tilde u}_{L^2(\partial M_1^\delta,g)}^2 
\leq \frac{C}{\de}\norm{d(u_1-\tilde u)}_{L^2(M_1^\de,g)}^2\\
&\leq 
 C\mu_1\la_1^2\de^{-2}+C\de^3\la_1^2.\eeqa
 We now set $v=\tilde u-(\tilde u)_M$  where we write for short 
$(\tilde u)_M:=\fint_M \tilde u dv_g$ and note that since 
$\int u_1 dv_g=0$ we have 
\beqas 
\label{est:mv-tilde-u}
\abs{(\tilde u )_M}\leq C\norm{u_1-\tilde u}_{L^2(M,g)} .\eeqas
Since $u$ and hence also $v$ is bounded uniformly and since $\norm{u_1}_{L^2(M,g)}=1$ we thus get
\beqas \label{est:v-l2-1}
1-\norm{v}_{L^2(M,g)}^2&\leq \norm{u_1+v}_{L^2(M,g)}\cdot \norm{u_1-v}_{L^2(M,g)}\leq C(\norm{u_1-\tilde u}_{L^2(M,g)}+\norm{(\tilde u)_M}_{L^2(M,g)})\\
&\leq C\norm{u_1-\tilde u}_{L^2(M,g)}.
\eeqas
Inserting \eqref{est:L2-diff-u-tilde} and \eqref{est:L2-diff-u-tilde_2} into this estimate hence gives a bound of 
\beqs
\norm{v}_{L^2(M,g)}^2\geq 
1- C[\mu_1^{1/2}\la_1\de^{-1}+\de^{3/2}\la_1].
\eeqs
We may finally combine this estimate with the bound \eqref{est:energy-tilde-u} on $\norm{d\tilde u}_{L^2}^2=\norm{dv}_{L^2}^2$ to reach
\beqas
\frac{\norm{d v}_{L^2(M,g)}^2}{\norm{v}_{L^2(M,g)}^2}&\leq \la_1 \cdot \frac{1-(\frac{\mu_1 \la_1}{2\de}-C\la_1 \de^4)}{1-C (\de^{-1}\mu_1^{1/2} \la_1+\de^{3/2}\la_1)}.
 \eeqas
 Since this quotient can be no smaller than the first eigenvalue $\la_1$ we must thus have that 
$$\mu_1 \leq C\cdot \frac{2\de}{\la_1}\cdot [ \la_1 \de^4+\de^{-1}\mu_1^{1/2} \la_1+\de^{3/2}\la_1 ]\leq \half \mu_1 +C(\de_2^5+\de_2^{5/2}+1),$$
which gives the desired uniform upper bound on $\mu_1$ and hence completes the proof of the lemma. 
\end{proof}

We finally explain why the above proof still applies in the setting of Lemma~\ref{lemma:sharp2} where we have two quite short geodesics $\si_{2,3}$ which are not disconnecting, and hence why also in this setting the energy estimate \eqref{est:ass:-energy-est} holds true for a constant that is independent of $\ell_{2,3}$ as required in the proof of Theorem~\ref{thm:sharp}.  

\begin{rmk}\label{rmk:symm-energy}
Let $(M,g)$ be a genus two surface with Fenchel-Nielsen coordinates $\ell_2=\ell_3\in [2\eta,4\eta]$, $\ell_1\in (0,\bar\ell=\bar\ell(\eta))$, and $\psi_{1,2,3}=0$ as considered in Lemma~\ref{lemma:sharp2}. 
We note that in this situation, the assumptions of Lemma~\ref{lemma:est-u-thick-main} are not satisfied if we choose $\bar \de$ to be independent of $\eta$, say $\bar \de=\arsinh(1)$. On the other hand, if we drop the assumption that all geodesics of length no more than $2\bar \de$ are disconnecting then we can have that two of the boundary curves of the same connected component of $ \{p:\inj_p(g)>\de\}$ are contained in the same collar around a geodesic $\tilde \si$ which is not disconnecting. We may however still apply the above proof  in such a situation provided we know that
the mean values of 
$u_1\vert_{\Col(\tilde \si)}$ over these two curves $\{-X_\de(\tilde \ell)\}\times S^1$ and $\{X_\de(\tilde \ell)\}\times S^1$, 
 used in the definition of $\tilde u$, agree. 
In the setting of Lemma~\ref{lemma:sharp2} this is indeed the case: 
As we consider only values of $\ell_1$ for which $\la_1$ is simple and for which \eqref{est:pointwise-lower-bound} holds, we know that the restrictions
of the eigenfunction to the two collars $\Col(\si^{2,3})$ must be even, i.e. $u_1(s,\theta)=u_1(-s,\theta)$ in the corresponding collar coordinates on $\Col(\si^{2,3})$. In particular, the above mentioned mean values agree and \eqref{eq:def-tildeu} still gives a well-defined comparison function $\tilde u$.
\end{rmk}

\subsection{Proof of Lemma~\ref{lemma:Poincare-functions} and Remark~\ref{rmk:L-infty}}\label{subsection:last}$ $\\
For the sake of completeness we finally provide a proof of the Poincar\'e estimate used in the previous part, as well as a proof of the uniform $L^\infty$-bound on $u_1$ used throughout the paper.

\begin{proof}[Proof of Lemma~\ref{lemma:Poincare-functions}]
Let $(M,g)$ be as in Lemma~\ref{lemma:Poincare-functions} and let $\de_0:=\half\arsinh(1)$. 
We note that the diameter of the connected components $\widehat M^j$ of $\de_0\thick(M_1^\de,g)$ is bounded from above in terms of the genus of $M$, so standard versions of the Poincar\'e inequality, combined with the trace-theorem, imply that 
\beqa 
\Vert v\Vert_{L^2(\widehat{M}^{j},g)}^2
+\max_i\Vert v\Vert_{L^2(\widehat \gamma_i^j, g)}^2 
\leq C\left( \min_i\Vert v\Vert_{L^2(\widehat \gamma_i^j, g)}^2 +  \Vert d v\Vert_{L^2(\widehat{M}^{j},g)}^2\right)\label{est:Poinc_thick},
\eeqa
$\widehat \gamma_i^j$ the boundary curves of $\widehat{M}^{j}$ and $C$ a constant that depends only on the genus.

The connected components $K^j$ of 
$\overline{\de_0\thin(M_1^\de,g)}$ are now given by hyperbolic cylinders which
are subsets of collars around geodesics $\si^j$ of length $\ell_j< 2\de_0$. 
These cylinders are described  in 
collar coordinates by 
$ K^j= [X_j^-,X_j^+] \times S^1$ where $X_j^\pm$ are as follows: 
If 
 $\ell_j\geq 2\de$, then $K^j$ does not contain a boundary curve of $M_1^\de$ and hence  $X_j^+= -X_j^-= X_{\de_0}(\ell_j)$, where $X_\de(\ell)$ is described in  \eqref{eq:Xde}. Conversely if $\ell_j<\de$ then one of the boundary curves of $K^j$ coincides with a boundary curve of $M_1^\de$ and hence 
  (after changing the orientation of the collar coordinates if necessary) $X_j^-
=X_\de(\ell_j)$ while $X_j^+=X_{\de_0}(\ell_j)$. 
In both cases, the (euclidean) length of these cylinders is bounded from above by $C\de^{-1}$ thanks to  \eqref{eq:Xde}. Hence, a short calculation shows that 
\beqa \label{est:p1} 
\Vert v\Vert^2_{L^2(K^j,g)} +\max_{\pm} \int_{\{X_j^\pm\}\times S^1} v^2 d\th\leq C\min_{\pm} \int_{\{X_j^\pm\}\times S^1} v^2 d\th + \frac{C}{\de} \Vert dv\Vert^2_{L^2(K^j,g)}  
\eeqa
for a universal constant $C$. 

We note that  the $L^2$-norms of the traces on circles $\gamma=\{s\}\times S^1$ with respect to the euclidean and hyperbolic metric are related by 
$\norm{v}_{L^2(\gamma,g)}^2=\rho(s)\int_{\{s\}\times S^1}v^2 d\th$. As $\rho(s)$ scales like the injectivity radius, we thus have that the integrals over circles $\gamma=\{X_j^\pm\}\times S^1$ appearing in the above formula are  of order $\frac{C}{\de} \norm{v}_{L^2(\gamma,g)}$ if $\gamma$ coincides with one of the boundary curves $\gamma^i$ of $M_1^\de$.

The lemma now follows by iteratively applying  these two estimates \eqref{est:Poinc_thick} and \eqref{est:p1}
 on adjacent connected components of $\de_0\thick(M_1^\de,g)$ and $\de_0\thin(M_1^\de,g)$, starting with the component that contains the boundary curve $\gamma^j$ on which $v$ vanishes.
\end{proof}

\begin{proof}[Proof of Remark~\ref{rmk:L-infty}]
Let $(M,g)$ be an oriented hyperbolic surface for which the shortest closed geodesic $\si^1$ is disconnecting. 
We let $\de>0$ be a universal constant that is chosen small enough so that $\de\thin(M,g)$ is contained in the subsets $\Cyl_{X(\ell_j)-1}=\{(s,\th): \abs{s}\leq X(\ell_j)-1\}$ of the collars $\Col(\si^j)$ around the simple closed geodesics $\si^j$ of length $\ell_j<2\arsinh(1)$. As observed in the proofs of Lemmas \ref{lemma:sharp-genus3} and \ref{lemma:sharp2}, see in particular \eqref{est:H2} and \eqref{est:osc}, we have a uniform bound on the $H^2$-norm of $u_1$  and hence obtain a uniform bound on the oscillation 
 of $u_1$ over each connected component of  $\de\thick(M,g)$.
To bound the oscillation of $u_1$ over subsets of the collars $\Col(\si^j)$ considered above, we first recall that the angular energy estimate \eqref{est:theta-with-Lambda} gives a uniform upper bound on the oscillation over circles $\{s\}\times S^1$ in this set. 
At the same time, we can bound 
\begin{align*} 
\abs{\fint_{\{s_1\}\times S^1}u_1 d\theta-\fint_{\{s_2\}\times S^1}u_1 d\theta}\leq C \int_{\Col(\si^j)} \abs{\partial_s u_1} dsd\th= C\cdot \la_1^{1/2} X(\ell_j)^{1/2}\leq  C\la_1^{1/2} \ell_j^{-1/2}\leq C.
\end{align*}
for any 
$s_{1,2}\in [-X(\ell_j)+1,X(\ell_j)-1]$ and any $j$.
Combined we thus obtain a uniform bound on $\osc_M u_1$ and so, as $\int_Mu_1=0$, on $\norm{u_1}_{L^\infty(M,g)}$ as claimed. 
\end{proof}

{\sc Mathematisches Institut, Universit{\"a}t Freiburg, 79104 Freiburg, Germany }\\
{\sc Mathematical Institute, University of Oxford, Oxford, OX2 6GG, UK}

\end{document}